\newcommand{\abs}[1]{\left\lvert{#1}\right\rvert}
\newcommand{\norm}[1]{\left\|{#1}\right\|}
\newcommand{\mc}{\mathcal} 
\newcommand{\R}{\mathbb{R}}\newcommand{\N}{\mathbb{N}}
\newcommand{\Z}{\mathbb{Z}}
 \newcommand{\ie}{i.e.\ }
\newcommand{\af}{\overleftarrow{f}}
\newcommand{\arr}{\overleftarrow}
\newtheorem{theorem}{Theorem}[section]
\newtheorem{claim}{Claim}[section]
\newtheorem{proposition}[theorem]{Proposition}
\newtheorem{lemma}[theorem]{Lemma}
\newtheorem{corollary}[theorem]{Corollary}
\theoremstyle{definition} 
\newtheorem{example}[theorem]{Example}
\newtheorem{examples}[theorem]{Examples}
\theoremstyle{remark} \newtheorem{remark}[theorem]{Remark}
\newcommand{\Homeo}[1]{\mathrm{Homeo}_{#1}}
\newcommand{\End}[1]{\mathrm{End}^{#1}}
\newcommand{\Mor}{\mathrm{Mor}}
 \DeclareMathOperator{\Lip}{Lip}
\DeclareMathOperator{\Per}{Per}
\begin{document}

\title{Structural stability of the inverse limit  of endomorphisms}

\author{Pierre Berger~\footnote{berger@math.univ-paris13.fr, LAGA
    Université Paris 13} and Alejandro Kocsard~\footnote{akocsard@id.uff.br, Universidade Federal Fluminense}} 
\date{\today}

\maketitle
\begin{abstract}

  We prove that every endomorphism which satisfies Axiom A and the strong transversality conditions is $C^1$-inverse limit
  structurally stable.  These conditions were conjectured to be
  necessary and sufficient.
  This result is applied to the
  study of unfolding of some homoclinic tangencies.
   This also achieves a characterization of $C^1$-inverse limit
  structurally stable covering maps. 
\end{abstract}
\section*{Introduction}

Following Smale \cite{Sm67}, a diffeomorphism $f$ is $C^r$-structurally stable if any $C^r$-perturbation $f'$ of $f$ is conjugate to $f$ via a homeomorphism $h$ of $M$:
\[f\circ h= h\circ f'.\]
A great work was done by many authors to provide a satisfactory
description of $C^1$-structurally stable diffeomorphisms, which starts
with Anosov, Smale, Palis and finishes with Robinson
\cite{Rs} and Ma\~{n}\'e \cite{Mane}. Such diffeomorphisms are those
which satisfy Axiom A and the strong transversality condition.

The descriptions of the structurally stable maps for smoother topologies ($C^r$, $C^1$, holomorphic...)
remain some of the hardest, fundamental and open questions in dynamics.

Hence the description of $C^r$-structurally stable endomorphisms ($C^r$-maps of a manifold not necessarily bijective) with critical points (points at which the differential is not surjective) is even harder.


Indeed, this implies that the critical set must be stable (\ie the map must be equivalent to its perturbations via homeomorphisms) and so that $r$ must be at least $2$.  We recall that the description of critical sets which are stable is still an open problem \cite{Ma70}. 

It is not the case when we consider the structural stability of the inverse
limit. 
We recall that the \emph{inverse limit set} of a $C^1$-endomorphism $f$ is the space of the full orbits
$(x_i)_i \in M^\mathbb Z$ of $f$. The dynamics induced by $f$ on its
inverse limit set is the shift. The endomorphism $f$ is
\emph{$C^1$-inverse limit stable} (or equivalently inverse
limit of $f$ is $C^1$-structurally stable) if for every $C^1$ perturbation $f'$
of $f$, the inverse limit set of $f'$ is homeomorphic to the one of
$f$ via a homeomorphism which conjugates both induced dynamics and
which is $C^0$-close to the canonical inclusion into $M^\mathbb Z$.

When the dynamics $f$ is a diffeomorphism, the inverse limit set $\arr
M_f$ is homeomorphic to the manifold $M$. The $C^1$-inverse limit
stability of $f$ is then equivalent to the $C^1$-\emph{structural
  stability} of $f$: every $C^1$-perturbation of $f$ is conjugated to
$f$ via a homeomorphism of $M$ $C^0$-close to the identity.

The concept of inverse limit stability is an area of great interest for semi-flows given by PDEs,
although still at its infancy \cite{Quandt, Joly2010generic}.

There were many works giving sufficient conditions for an endomorphism to be structurally stable \cite{MP, Przy, BR12}.
The latter work generalized Axiom A and the strong transversality condition to differentiable endomorphisms of
manifolds, and conjectured these conditions to be equivalent to
$C^1$-inverse limit stability. A main point of this work was to give evidences that the notion of inverse stability should be independent to the nature of the critical set (stable or not for instance).  
A similar conjecture was sketched in \cite{Quandt-ano}.

We prove here one direction of this conjecture, generalizing \cite{MP, Przy, BR12, Ri, Rs} :
 \begin{theorem}[Main result] \label{thm:main}
  Every $C^1$-endomorphism of a compact manifold which satisfies Axiom A and the strong transversality condition is $C^1$-inverse
  limit structurally stable.
\end{theorem}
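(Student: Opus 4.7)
The plan is to follow the classical Robbin--Robinson--Ma\~n\'e template for $C^1$-structural stability of diffeomorphisms, transplanted to the shift $\arr f$ on the inverse limit. The key observation is that even though $f$ may have critical points and fail to be locally invertible, the induced shift $\arr f\colon \arr M_f\to\arr M_f$ is always a homeomorphism. The goal is then to produce, for every $C^1$-small perturbation $f'$, a conjugacy $h\colon \arr M_{f'}\to \arr M_f$ close to the canonical inclusion into $M^{\Z}$ and satisfying $h\circ \arr{f'}=\arr f\circ h$.

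First, Axiom A and spectral decomposition yield a splitting $\Omega(f)=\Omega_1\sqcup\cdots\sqcup \Omega_k$; each inverse limit $\arr\Omega_i$ is a hyperbolic set for $\arr f$ carrying local product structure, and standard hyperbolic continuation on shift spaces gives, for $f'$ $C^1$-close to $f$, a hyperbolic continuation $\arr\Omega_i'\subset \arr M_{f'}$ together with a H\"older conjugacy $h_i\colon \arr\Omega_i'\to\arr\Omega_i$, $C^0$-close to the inclusion. This handles the recurrent part.

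The second, harder step is to extend these local conjugacies to the wandering part using strong transversality. Each $\bar x\in \arr M_f$ carries a stable plaque $W^s(x_0)$, depending only on $x_0$, and an unstable plaque $W^u(\bar x)$, depending on the full backward history; strong transversality forces transverse intersection everywhere in $M$. I would assemble these into a continuous family of local product boxes over $\arr M_f$ and realize $h$ as the unique fixed point of a graph-transform operator $\Gamma$ on the Banach space of continuous sections $\sigma\colon \arr M_{f'}\to M^{\Z}$ close to the inclusion and taking values in $\arr M_f$. Contractivity along stable plaques would come from the hyperbolicity on $\arr\Omega_i$; contractivity along unstable plaques would come from applying $\arr f^{-1}$, which is available on the inverse limit although $f$ itself has no local inverse. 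The strong transversality condition controls how the two contractions combine over the wandering regions.

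The main obstacle is the global construction of this plaque family together with the contraction estimate for $\Gamma$ outside a neighborhood of the basic sets. Because the unstable direction depends on the whole backward history, the plaque family must live intrinsically over $\arr M_f$ and vary continuously in $\bar x$; since $f$ is not locally invertible, the unstable graph transform cannot be written via a local inverse of $f$ and must be performed at the level of full orbits, where the metric on $\arr M_f$ is uniformly Lipschitz in the forward direction only. To close the contraction argument one needs a uniform bound on the transit time of any orbit through the wandering region, which should follow from Axiom A (cycle-freeness providing a Smale filtration $M=M_0\supset M_1\supset\cdots\supset M_k$) together with the strong transversality condition, in complete analogy with Ma\~n\'e's argument in the diffeomorphism case; adapting this filtration argument so that it is compatible with the history-dependent unstable plaques is where I expect most of the technical work to lie.
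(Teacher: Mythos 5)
Your high-level plan---reduce to a fixed-point problem over the shift $\arr f$ (which is a homeomorphism even when $f$ is not invertible), use the spectral decomposition plus a filtration to control transit through the wandering region, and track that unstable objects depend on the full backward history---agrees in outline with the paper's strategy. But the specific obstacles you flag as "technical work to lie ahead" are precisely where the proof lives, and the proposal gives no mechanism for any of them. First, injectiveness of the constructed $h$ does not come for free: the paper must impose Robbin's $d_\infty$-Lipschitz bound $(C_3)$ on the displacement $w$ (measured in the $\sup$ metric over full orbits) and use it in Proposition~\ref{Robbin} to rule out orbit collapse. A fixed point of your graph-transform operator is only a conjugacy if you build such a quantitative Lipschitz control into the invariant set of $\Gamma$, and this constraint forces the functional setting, not a purely geometric plaque picture. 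Second, in the $C^1$ category the graph transform's contraction modulus is governed by the continuity of $Tf$ and is not uniform; Robbin's original argument needs $C^2$, and Robinson's $C^1$ extension (which the paper follows) replaces $Tf$ by a smoothed family $F^\delta$. Your proposal operates with $Tf$ directly and would fail to iterate.

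The decisive missing ingredient is the critical set. You suggest obtaining unstable contractivity "by applying $\arr f^{-1}$," but near $C_f$ the linearization $Tf$ is not injective, and any smoothed inverse $(F^\delta)^{-1}$ has norm blowing up as $\delta\to 0$. Consequently the angle between the stable and unstable bundles you would produce by pulling back degenerates as $\delta\to0$, destroying the uniform estimates needed for the contraction. The paper's entire asymmetry between the stable construction (Proposition~\ref{fondaprops}, by pull-back) and the unstable construction (Proposition~\ref{fondapropu}, by push-forward from a neighborhood of each $W^s(\arr\Omega_i)$), together with the requirement that the angle bound $K^{-1}$ in Proposition~\ref{principal} be independent of $\delta$, exists to solve exactly this problem, using strong transversality as a quantitative lower bound on $\|F^\delta(u)\|$ for $u$ transverse to $E^s$. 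You invoke strong transversality only as a qualitative transversality statement, which is not enough. Finally, note that even assembling your "continuous family of local product boxes" requires Lipschitz partitions of unity on $\arr M_f$, which is not a manifold; the paper has to prove they exist via a convolution argument on $M^{\Z}$ (Lemma~\ref{covol}), an analytic preliminary your proposal takes for granted.
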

The definitions of Axiom A and the strong transversality condition will be recalled in \textsection \ref{sec:axiom-A-endomorphisms}.

 Joint with the works of
\cite{AMS} and \cite{BR12}, this proves that $C^1$-inverse limit
stable covering maps of manifolds are exactly the $C^1$ covering  maps which
satisfy Axiom A and strong transversality conditions (see \textsection \ref{covering}).  

On the other hand, our main result applies to the dynamical studies of homoclinic
tangencies unfolding as seen in section (see \textsection \ref{unfolding}).

 The proof of the main result is done by generalizing Robbin-Robinson proof of the structural stability with two new difficulties. We will have to handle the geometrical and analytical part of the argument on the inverse limit space which is in general not a manifold as it is the case for diffeomorphisms (see \textsection \ref{analysis}). Also we will have to take care of the critical set in the plane fields constructions and in the inverse of the operator considered (see \textsection \ref{propprincipal}, \ref{princis} and \ref{princiu}).

\thanks{ This work has been Partially supported by the Balzan Research
  Project of J. Palis. We are grateful to A. Rovella
  for helpful discussions.}

\section{Notations and definitions}
\label{sec:notations}

Along this article $M$ will denote a smooth Riemannian compact
manifold without boundary. The distance on $M$ induced by the
Riemannian structure will be simply denoted by $d$. For any $r\in\N$,
we denote by $\End{r}(M)$ the space of $C^r$ endomorphisms of $M$. By
\emph{$C^r$ endomorphism of $M$}, we mean a $C^r$ map $f$ of $M$ into
$M$, which is possibly non surjective and can have a non-empty
\emph{critical set}:
\[C_f:= \{x\in M:\; T_xf \; \text{not surjective}\}.\] We endow
$\End{r}(M)$ with the topology of uniform convergence of the first $r$
derivatives.

Given any $f\in\End{r}(M)$, a subset $\Lambda\subset M$ is
\emph{forward invariant} whenever $f(\Lambda)\subset\Lambda$, and
\emph{totally invariant} when $f^{-1}(\Lambda)=\Lambda$. Note that
totally invariance implies forward invariance.

The set of periodic points of $f$ is denoted by $\Per(f)$ and we write
$\Omega(f)$ for the set of non-wandering points. Observe that
$f(\Per(f))=\Per(f)$ and $f(\Omega(f))=\Omega(f)$, but in general they
are not totally invariant.

Now, let $K$ be a compact metric space and $E\to
K$ a finite dimensional vector bundle over $K$. If $F\subset E$ is a
sub-vector-bundle of $E\to K$, we denote by $E/F$ the quotient
bundle. Note that any (Riemannian) norm $\norm{\cdot}_E$ on $E$
naturally induces a (Riemannian) norm on $E/F$ defining
\begin{displaymath}
  \norm{v_x+F_x}_{E/F}:=\inf_{w_x\in F_x}\norm{v_x+w_x}_E,
  \quad\forall x\in K,\ \forall v_x\in E_x. 
\end{displaymath}

On the other hand, observe that any bundle map $T\colon E\to E$ that
leaves invariant $F$ (\ie $F$ is forward invariant for $T$) naturally
induces a bundle map $[T]\colon E/F\to E/F$.

\subsection{Inverse limits}
\label{sec:inverse-limits}

Given any set $X$ and an arbitrary map $f\colon X\to X$, we define its
\emph{global attractor} by $X_f:=\bigcap_{n\geq 1} f^n(X)$ and its
\emph{inverse limit} by
\begin{equation}
  \label{eq:inv-lim-def}
  \arr{X}_f :=\left\{ \underline{x}=(x_n)_n\in X^{\Z} :
    f(x_n)=x_{n+1},\ \forall n\in\Z\right\}.
\end{equation}

Observe that $f(X_f)=X_f$, but in general $X_f$ is not totally
invariant ($X_f\not= f^{-1}(X_f)$), and $\arr{X}_f\subset
(X_f)^\Z$. Moreover, $f$ acts coordinate-wise on $\arr{X}_f$. In fact,
we can define $\arr{f}\colon \arr{X}_f\to\arr{X}_f$ by
$\af(\underline{x}):=(f(x_n))_n=(x_{n+1})_n$, and in this way, $\af$
turns out to be a bijection and $f$ a factor of it. Indeed, for every
$j\in\Z$ we can define the $j^{\mathrm{th}}$-projection
$\pi_j\colon\overleftarrow{X}_f\to X_f$ by $\pi_j(\underline{x})=x_j$
and then we have
\begin{displaymath}
  \pi_{j+1}=\pi_j\circ \overleftarrow{f}=f\circ\pi_j.
\end{displaymath}
Whenever $X$ is a topological space and $f$ is continuous, we shall
consider $X^\Z$ endowed with the product topology. In this case,
$\arr{X}_f$ turns out to be closed in $X^\Z$ and $\arr{f}$ a
homeomorphism. Of course, $\Per(f)$ and $\Omega(f)$ are contained in
$X_f$ and $\arr{X}_f$ is compact whenever $X_f$ is compact itself.

Finally, when $X$ is endowed with a finite distance $d$, we shall
consider $X^\Z$ equipped with the distance $d_1$ given by
\begin{equation}
  \label{eq:def-d-Z-prod}
  d_1(\underbar{x},\underbar{y}):=\sum_{n\in\Z}
  \frac{d(x_n,y_n)}{2^{\abs{n}}}. 
\end{equation}
The metric space $(X^\Z,d_1)$ is compact if and only if $X$ is
compact itself.

\subsection{Structural and inverse limit stability}
\label{sec:structural-stability}

Two endomorphisms $f,g\in\End{r}(M)$ are \emph{conjugate} when there
exists a homeomorphism $h\in\Homeo{}(M)$ satisfying $h\circ f=g\circ
h$. More generally, the endomorphisms $f$ and $g$ are \emph{inverse
  limit conjugate} whenever there exists a homeomorphism $H\colon
\arr{M}_f\to\arr{M}_g$ such that $H\circ \af=\arr{g}\circ H$. Remark
that the conjugacy relation implies the inverse limit
conjugacy one.

A $C^r$-endomorphism $f$ is \emph{$C^s$-structurally stable} (with
$0\leq s\leq r$) when there exists a $C^s$-neighborhood $\mc{U}$ of
$f$ such that every $g\in\mc{U}$ is conjugate to $f$. Analogously, $f$
is \emph{$C^s$-inverse limit stable} when every $g\in\mc{U}$ is
inverse limit conjugate to $f$.

\subsection{Axiom A endomorphisms}
\label{sec:axiom-A-endomorphisms}

Let $f\in\End{1}(M)$ and let $\Lambda\subset M$ be a compact forward
invariant set. The set $\Lambda$ is \emph{hyperbolic}
whenever there exists a continuous sub-bundle $E^s\subset T_\Lambda M$
satisfying the following properties:
\begin{enumerate}
\item $E^s$ is forward invariant by $Tf$, \ie
  \begin{displaymath}
    T_xf(E^s_x)\subset E^s_{f(x)}, \quad\forall x\in\Lambda;
  \end{displaymath}
\item the induced linear map $[T_xf]\colon T_x M/E^s\to T_{f(x)}
  M/E^s$ is an isomorphism, for every $x\in\Lambda$; (see
  \S\ref{sec:notations} for notation of quotient bundles and induced
  maps)
\item $\forall x\in\Lambda$, $\norm{T_xf\big|_{E^s_x}}<1$ and
  $\norm{[T_xf]^{-1}}<1$, where the first operator norm is induced by
  the Riemannian structure of $M$, and the second by its quotient.
\end{enumerate}

\begin{remark}
  \label{rem:Eu-abstract}
  Notice that despite $E^s$ is contained in $T_\Lambda M$, in general
  we cannot define $E^u$ as a sub-bundle of the tangent bundle.
\end{remark}
 
However, using a classical cone field argument, we show:
\begin{proposition}
  \label{prop:Eu-concrete}
  There exists a continuous family $(E_{\underline x}^u)_{\underline
    x\in \arr{\Lambda}_f}$ of subspaces of $T_\Lambda M$ such that:
  \begin{enumerate}
  \item for every $\underline x\in\arr{\Lambda}_f$, $E_{\underline
      x}^u\subset T_{\pi_0(\underline x)}M$ and $Tf(E_{\underline
      x}^u) = E_{\af(\underline x)}^u$,
  \item for every $\underline x\in\arr\Lambda_f$, the restriction
    $Tf\colon E_{\underline x}^u\to E_{\arr f(\underline x)}^u$ is
    invertible and
    \begin{displaymath}
      \norm{(Tf\big|_{E^u_{\underline x}})^{-1}}<1.
    \end{displaymath}
  \end{enumerate}
\end{proposition}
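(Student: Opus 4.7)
The plan is the classical unstable cone-field construction, adapted to the inverse limit. The crucial observation is that, because $Tf$ may fail to be injective on $T_\Lambda M$, the unstable direction at a point is not determined by the base point alone but by its full backward history $\underline{x}$; this is precisely why the family $(E^u_{\underline{x}})$ is indexed by $\arr{\Lambda}_f$ rather than by $\Lambda$.

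First I would fix a continuous subbundle $F \subset T_\Lambda M$ complementary to $E^s$, giving a continuous splitting $T_xM = E^s_x \oplus F_x$. For $\alpha > 0$ define the unstable cones
$$C^u_x(\alpha) := \{v^s + v^F : v^s \in E^s_x,\ v^F \in F_x,\ \|v^s\| \leq \alpha\|v^F\|\}.$$
A standard computation using $\|Tf|_{E^s_x}\| < 1$ and $\|[T_xf]^{-1}\| < 1$ (the second applied via the identification $T_xM/E^s_x \simeq F_x$) shows that, for $\alpha$ small enough, there is $\lambda \in (0,1)$ with
$$Tf(C^u_x(\alpha)) \subset C^u_{f(x)}(\lambda\alpha), \qquad \|Tf(v)\| \geq \lambda^{-1}\|v\|\ \ \forall v \in C^u_x(\alpha),\ \forall x \in \Lambda.$$
Moreover $\ker T_xf \subset E^s_x$ since $[T_xf]$ is an isomorphism; combined with the defining inequality of $C^u_x(\alpha)$, this shows that $Tf$ is injective on $C^u_x(\alpha) \setminus \{0\}$.

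For each $\underline{x} \in \arr{\Lambda}_f$ I then set
$$E^u_{\underline{x}} := \bigcap_{n\geq 0} T_{x_{-n}}f^n\bigl(C^u_{x_{-n}}(\alpha)\bigr) \subset T_{x_0}M.$$
By iterating the cone inclusion, $T_{x_{-n}}f^n(C^u_{x_{-n}}(\alpha)) \subset C^u_{x_0}(\lambda^n\alpha)$, and this nested sequence contains the subspace $T_{x_{-n}}f^n(F_{x_{-n}})$, of dimension $\dim F$ by the injectivity above. Parametrising subspaces of $T_{x_0}M$ complementary to $E^s_{x_0}$ as graphs of linear maps $L:F_{x_0}\to E^s_{x_0}$, the operation $V \mapsto Tf(V)$ becomes a uniformly contracting graph transform; hence the intersection is exactly one $(\dim F)$-dimensional linear subspace, the graph of a limit $L_\infty^{\underline{x}}$.

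The three conclusions then follow. The identity $T_{x_0}f \circ T_{x_{-n}}f^n = T_{x_{-n}}f^{n+1}$ gives $Tf(E^u_{\underline{x}}) \subset E^u_{\af(\underline{x})}$, with equality by dimension count together with the injectivity of $Tf$ on $C^u$; the bound $\|(Tf|_{E^u_{\underline{x}}})^{-1}\| < 1$ is inherited from the cone-expansion estimate $\|Tf(v)\|\geq \lambda^{-1}\|v\|$; continuity of $\underline{x} \mapsto E^u_{\underline{x}}$ follows from the uniformity of the graph-transform contraction and the continuity of $E^s$, $F$ and $Tf$ in the base point. The main obstacle, and the reason the construction is intrinsically over $\arr{\Lambda}_f$ rather than $\Lambda$, is the possible non-injectivity of $Tf$: the classical diffeomorphism argument uses $(Tf)^{-1}$ freely, whereas here every estimate must be arranged forward in time, extracting the necessary expansion from the induced isomorphism $[Tf]$ on the quotient bundle $TM/E^s$.
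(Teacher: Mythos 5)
The paper gives no proof of this proposition --- it is stated as following from ``a classical cone field argument.'' Your overall strategy (cone fields around a continuous complement $F$ of $E^s$, pushed forward along backward orbits, a graph transform to identify the limit) is the right one, and you correctly flag that the construction must be indexed by $\arr{\Lambda}_f$ rather than $\Lambda$ because $Tf$ may be non-injective.

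However, there is a genuine error in the claim that for $\alpha$ small enough $Tf\bigl(C^u_x(\alpha)\bigr)\subset C^u_{f(x)}(\lambda\alpha)$ with a fixed $\lambda\in(0,1)$. That claim asserts the cones shrink geometrically onto the reference subbundle $F$; iterating it would give $E^u_{\underline x}\subset\bigcap_n C^u_{x_0}(\lambda^n\alpha)=F_{x_0}$, hence $E^u_{\underline x}=F_{x_0}$ for every $\underline x$, and then item~(1) would force $Tf(F_{x_{-1}})=F_{x_0}$, i.e.\ $F$ invariant --- impossible, since $F$ was an arbitrary complement of $E^s$. Writing $T_xf$ in block form $\left(\begin{smallmatrix}C_x&B_x\\0&A_x\end{smallmatrix}\right)$ with respect to $E^s_x\oplus F_x$ (so $C_x=T_xf|_{E^s_x}$, $A_x\simeq[T_xf]$, and $B_x\colon F_x\to E^s_{f(x)}$ is the shear), the off-diagonal block $B_x$ pushes vectors \emph{out} of thin cones, so forward invariance $Tf(C^u_x(\alpha))\subset C^u_{f(x)}(\alpha)$ holds only for $\alpha$ \emph{above} a positive threshold controlled by $\sup_x\|B_x\|$, and the cones never narrow to $F$. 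The mechanism that actually works is the graph transform you also invoke: on linear maps $L\colon F_{x_0}\to E^s_{x_0}$ it reads $L\mapsto C\,L\,A^{-1}+B\,A^{-1}$, whose linear part has norm at most $\|T_xf|_{E^s_x}\|\cdot\|[T_xf]^{-1}\|<1$; it is this contraction, not cone narrowing, that makes the nested intersection a single subspace. This exposes a second tension: you also want $\alpha$ small for the uniform expansion $\|Tf(v)\|\ge\lambda^{-1}\|v\|$ on the cone, while invariance needs $\alpha$ large. To obtain the one-step bound $\|(Tf|_{E^u_{\underline x}})^{-1}\|<1$ in the stated metric one must either exhibit a quantitative constraint relating the shear to the hyperbolicity constants, or, standardly, pass to an adapted (Lyapunov) metric for which the shear of the orthogonal complement of $E^s$ is small, or work with an iterate $f^N$. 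Once the narrowing claim is replaced by invariance for $\alpha$ sufficiently large and the graph-transform contraction carries the uniqueness, the argument is sound.
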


Given any (small) $\varepsilon>0$ and $\underbar{x}\in
\overleftarrow{\Lambda}_f$, we define the \emph{$\varepsilon$-local
  stable set} of $\underbar{x}$ by
\begin{displaymath}
  W^s_\varepsilon(\underbar{x},f):=
  \Big\{\underbar y\in\overleftarrow{M}_f : \varepsilon \geq
  d_1(\arr{f}^n(\underbar x),\arr{f}^n(\underbar y))
  \xrightarrow{n\to+\infty} 0,\ \forall n\geq 0 \Big\}.
\end{displaymath}
where $d_1$ denotes the distance given by \eqref{eq:def-d-Z-prod}; and
the \emph{$\varepsilon$-local unstable} set of $\underbar{x}$ is
defined analogously by
\begin{displaymath}
  W^u_\varepsilon(\underbar{x},f):=
  \Big\{\underbar y\in\overleftarrow{M}_f : \varepsilon \geq
  d_1\big(\overleftarrow{f}^{-n}(\underbar x),\overleftarrow{f}^{-n}(\underbar y)\big)
  \xrightarrow{n\to+\infty} 0,\ \forall n\geq 0 \Big\}.
\end{displaymath}
The geometry of these sets was described in \cite{BR12}. Let us recall
that $\pi_0\big( W^u_\varepsilon(\underbar{x},f)\big)$ and $\pi_0\big(
W^s_\varepsilon(\underbar{x},f)\big)$ are submanifolds of $M$ (for
$\epsilon$ sufficienlty small).

The endomorphism $f$ satisfies \emph{Axiom A} when $\Omega(f)$ is
hyperbolic and coincides with the closure of $\Per(f)$.

An Axiom A endomorphism satisfies the \emph{strong transversality
  condition} if for every $\underline x,\underline y\in
\Omega(\overleftarrow{f})$ and every $n\ge 0$, the map
$f^n\big|_{\pi_0(W^u_\epsilon (\underline x))}$ is transverse to
$\pi_0(W^s_\epsilon (\underline y))$. This means that for every $z\in
\pi_0(W^u_\epsilon (\underline x))\cap f^{-n}(\pi_0(W^s_\epsilon
(\underline y))$ the following holds:
\begin{displaymath}
  Tf^n \big(T_z \pi_0(W^u_\epsilon (\underline x))\big) + T_{f^n(z)} 
  \pi_0(W^s_\epsilon (\underline y))=T_{f^n(z)} M
\end{displaymath}

An endomorphism which satisfies Axiom A and the strong transversality
condition is called an \emph{$AS$-endomorphism}.

This notion generalizes the one of diffeomorphism. Let us recall some other examples.
\begin{examples}
\begin{itemize}
\item The action of any linear matrix in $\mathcal M_n(\Z)$ in the $n$-dimensional torus is hyperbolic and so  is AS. It is not structurally stable whenever the matrix is not in $SL_n(\Z)$ nor expanding \cite{Prz76}.
\item The constant map $\R^n\ni x\mapsto 0\in \R^n$ satisfies Axiom A and the strong transversality  condition.  
\item The map $\R^n\ni  x\mapsto x^2+c$ satisfies Axiom A and the strong transversality  condition whenever $c$ is such that a (possibly super) attracting periodic orbit exists.
\end{itemize}
\end{examples}
\begin{remark}
Let us notice that if two  endomorphisms $f_1\in C^1(M_1, M_1)$ and $f_2\in C^1(M_2, M_2)$ satisfy Axiom A and the strong transversality condition, then the product dynamics $f_1\times f_2 \in  C^1(M_1\times M_2, M_1\times M_2)$ also satisfy  Axiom A and the strong transversality condition. 

As an endomorphism is AS iff its one of its iterates is AS, it follows that the following delay dynamics is AS if $f\in C^1(M,M)$ is AS:
\[ M^n \ni (x_i)_i\mapsto  (f(x_m),x_1, \dots ,x_{m-1},0,\dots, 0)\in M^n.\]
\end{remark}

From the latter example and remark, we have the following.
\begin{example}\label{delaymap}
For every $c\in \R$ such that $x^2+c$ has an attracting periodic orbit, the following map is AS:
\[(x_i)_i\in \R^n \mapsto  (x_m^2+c,x_1, \dots ,x_{m-1}, 0, \dots ,0)\in \R^n.\]  
\end{example}
We will see that this example appears in the unfolding of generics homoclinic tangency in \textsection \ref{unfolding}.
\section{Applications of main Theorem \ref{thm:main}}
\label{sec:main-result}
%

\subsection{Description of \texorpdfstring{$C^1$}{C1}-inverse limit stable covering maps}\label{covering}
A \emph{$C^1$-covering map} of a compact, connected manifold $M$ is a surjective
$C^1$ endomorphism $f$ of $M$ without critical points.  Then, every
point of $M$ has the same number $p$ of preimages under $f$. We remark
that every distinguish neighborhood $U\subset M$ has its preimage
$\pi^{-1}_0(U)$ in the inverse limit $\arr M_f$ which is homeomorphic
to $U\times \mc{Z}_p$ where $\mc{Z}_p$ is a Cantor set labeling the
different $f$-preorbits of $U$. These homeomorphisms endow $\arr M_f$
with a structure of lamination called the Sullivan solenoid \cite{Su93}.

It follows immediately from a theorem due to Aoki, Moriyasu and Sumi
\cite{AMS} that: if an endomorphism $f$ is
$C^1$-inverse limit stable and has no critical point in the
non-wandering set, then $f$ satisfies Axiom A. By Theorem 2.4 of
\cite{BR12}, if $f$ is $C^1$-inverse limit stable and satisfies Axiom
A, then $f$ satisfies the strong transversality condition. Together
with  Main Theorem \ref{thm:main}, it comes the following description of $C^1$-inverse stable covering  maps.

\begin{theorem}
  \label{coro}
  A $C^1$-covering  map of a compact manifold is $C^1$-inverse limit stable
  if and only if it is an AS-endomorphism.
\end{theorem}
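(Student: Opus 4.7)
The plan is to prove the two implications separately, with both directions reducing essentially to an assembly of results already available.

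For the sufficiency direction (AS implies $C^1$-inverse limit stable), I would simply invoke Main Theorem \ref{thm:main}: an AS-endomorphism is a fortiori a $C^1$-endomorphism satisfying Axiom A and the strong transversality condition, hence $C^1$-inverse limit stable. Nothing more is needed here, and indeed this direction does not use the covering hypothesis.

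For the necessity direction, the covering hypothesis enters crucially through the absence of critical points. So I would begin by observing that if $f$ is a $C^1$-covering map then $C_f=\emptyset$; in particular the non-wandering set $\Omega(f)$ contains no critical point. Granted the Aoki--Moriyasu--Sumi result \cite{AMS}, which asserts that a $C^1$-inverse limit stable endomorphism whose non-wandering set avoids the critical set must satisfy Axiom A, I obtain that $f$ satisfies Axiom A. Then, having Axiom A and $C^1$-inverse limit stability at hand, I would apply Theorem 2.4 of \cite{BR12} to conclude that $f$ satisfies the strong transversality condition. Combining both properties shows that $f$ is an AS-endomorphism.

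I do not foresee an obstacle in this argument, since the two quoted results provide exactly the two missing implications. The only mild care is to verify that the hypothesis of the Aoki--Moriyasu--Sumi theorem indeed applies in our setting (covering maps are $C^1$-endomorphisms with empty critical set, so their non-wandering set trivially misses $C_f$), and that the definition of $C^1$-inverse limit stability used in the cited papers agrees with the one adopted in \S\ref{sec:structural-stability} (which it does, both demanding a conjugating homeomorphism on the inverse limit $C^0$-close to the canonical inclusion). Beyond these trivial checks, the proof is a one-line synthesis of Main Theorem \ref{thm:main}, \cite{AMS} and \cite[Theorem 2.4]{BR12}.
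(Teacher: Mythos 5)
Your proof matches the paper's argument exactly: the sufficiency is Main Theorem~\ref{thm:main}, and the necessity is obtained precisely by combining the Aoki--Moriyasu--Sumi theorem (which applies since a covering map has $C_f=\emptyset$) to get Axiom~A, and then Theorem~2.4 of~\cite{BR12} to get strong transversality. No differences to note.
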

 
\subsection{Application to dynamical study of unfolding homoclinic tangencies}\label{unfolding}
Let $M$ be a manifold of dimension $m$ and let $(f_\mu)_\mu$ be a smooth family of diffeomorphisms of $M$ which has a hyperbolic fixed point $p$ with unstable and stable directions of dimensions $u\ge 1$ and $s\ge 1$ respectively. Hence $m=u+s$.
The following Theorem has been proven in the general case as in \cite{Mora} (Prop. 1). For more restricted cases see \cite{PT93} when $(s,u)=(1,1)$ and Th. 1 \cite{Tatjer} when $(s,u)=(1,2)$.

\begin{theorem}[L. Mora]\label{Mora}
There exist $h\ge u$, an open set of families $(f_{\mu})_{\mu\in \R^h}$ of smooth diffeomorphisms of $M$, which exhibit  at $\mu_0\in \R^h$ an unfolding of a homoclinic tangency at $q\in W^s(p)\cap W^u(p)$, such that there exists a small neighborhood $N_q\subset M$ of $q$, there exists  a small neighborhood  $N_\mu\subset \R^h$ of $\mu_0$ covered by submanifolds $\mathcal L$ of dimension $u$,  satisfying for every $n$ large:
\begin{itemize}
\item $\mu_0$ belongs to every submanifold $\mathcal L$ and the intersection of two different such manifolds $\mathcal L$ is the single point $\mu_0$,
\item for every $\mathcal L$, there is parametrization $\gamma_n$ of $\mathcal L$  by $\R^u$, such that for every $\mu=\gamma_n(\underline b)\in \mathcal L\setminus \{\mu_0\}$, there is a chart $\phi_\mu$ of $N_q$, such that the rescaled first return map has the form:
\[\phi_\mu \circ f^n_{\gamma(\underline b)} \circ \phi_\mu^{-1}\colon \R^u\times \R^s\longrightarrow \R^u\times \R^s=\R^m\]
\[(\underline x,\underline y)\mapsto (x_u^2+b_u +\sum_{i=1}^{u-1} b_i x_i, x_1,\dots , x_{u-1},0,\dots ,0)+E_\mu(\underline x, \underline y),\]
with $\underline b=b_1,\dots b_u$, $\underline x=(x_1,\dots , x_u)$, $\underline y = (y_1,\dots, y_s)$ and $E_\mu\in C^\infty(\R^m,\R^m)$ small in the compact-open $C^r$-topology for every $r$ when $n$ is large.\end{itemize}
\end{theorem}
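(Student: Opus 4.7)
The plan is to adapt the Palis--Takens renormalisation scheme, as generalised by Mora, to arbitrary unstable and stable dimensions $(u,s)$. First I would linearise $f_{\mu_0}$ around the hyperbolic fixed point $p$: after restricting to a generic open subset of families (to avoid resonances), a Sternberg-type theorem provides local coordinates $(\underline x,\underline y)\in\R^u\times\R^s$ in which $f_\mu(\underline x,\underline y)=(A_\mu \underline x, B_\mu \underline y)$, with $A_{\mu_0}$ expanding and $B_{\mu_0}$ contracting, and with $W^u_{\mathrm{loc}}(p)=\{\underline y=0\}$, $W^s_{\mathrm{loc}}(p)=\{\underline x=0\}$. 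Choosing a basis of $\R^u$ that triangularises $A_{\mu_0}$ with moduli $|\lambda_1|\ge\cdots\ge|\lambda_u|>1$, the weakest direction $x_u$ is the one that will carry the quadratic part of the renormalised map.

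Second, I would decompose the first return map. Fix $n_0$ such that $q=f_{\mu_0}^{n_0}(\tilde q)$ for some $\tilde q\in W^u_{\mathrm{loc}}(p)$; for every large $n$, the map $f_\mu^n$ in a neighbourhood of $q$ factors as $\Psi_\mu\circ L_\mu^{n-n_0}$, where $L_\mu$ is the local linear part near $p$ and $\Psi_\mu$ is a fixed $C^\infty$ transition map independent of $n$. The openness assumption on the family encodes two genericity conditions: (a) the second derivative of the stable component of $\Psi_{\mu_0}$ along the weakest unstable direction is non-degenerate, giving the quadratic contact; (b) the parameter $\mu$ moves the image of $W^u$ in $u$ independent transverse directions with respect to $W^s$. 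I would then introduce an affine chart $\phi_\mu(\underline x,\underline y)= \bigl(\varepsilon_n^{-1}(\underline x-\underline x_n^\ast),\,\delta_n^{-1}(\underline y-\underline y_n^\ast)\bigr)$ centred near an appropriate $n$-th preimage of $q$, with $\varepsilon_n$ tuned so that $\lambda_u^{n}\varepsilon_n$ balances the quadratic tangency contribution, and with the other component-scales forced by the Jordan structure of $A_{\mu_0}$. A direct Taylor expansion then gives exactly the claimed form: the stable block collapses to $0$ because $\|B_\mu^n\|\delta_n^{-1}\to 0$; the top unstable coordinate picks up $x_u^2$ from the tangency, the shift $b_u$ from the transverse unfolding direction, and the coupling $\sum_{i<u}b_i x_i$ from the cross-terms in $A_\mu^n$ after rescaling; and the remaining unstable coordinates reproduce the shift $(x_1,\dots,x_{u-1})$ from the triangular structure of $A_{\mu_0}$.

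Finally, the map $\mu\mapsto \underline b(\mu,n)$ has full rank $u$ at $\mu_0$ by condition (b), so its level sets foliate a neighbourhood of $\mu_0$ in $\R^h$ by $(h-u)$-dimensional leaves; the submanifolds $\mathcal L$ are obtained as any $u$-dimensional transversal to this foliation through $\mu_0$, and distinct choices of transversal intersect only at $\mu_0=\gamma_n(0)$. The main obstacle, and the technical heart of Mora's argument, is to verify that $E_\mu\to 0$ in the compact-open $C^r$-topology for every $r$. Since the rescaling amplifies the $k$-th derivatives of the original map by factors of order $\varepsilon_n^{-k}$ and $\delta_n^{-k}$, one must control how the $k$-jets of $\Psi_\mu$ at $q$, multiplied by $\varepsilon_n^{k}$ or $\delta_n^{k}$ after the change of coordinates, compare with the eigenvalue powers in $A_\mu^n$ and $B_\mu^n$. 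The point to be proved is that the quadratic term in $x_u$ is the unique surviving non-linear contribution at leading order, while all other terms decay to $0$ as $n\to\infty$ at rates controlled by products of $|\sigma_j|^n$ and ratios $\lambda_i^n/\lambda_u^n$ for $i<u$.
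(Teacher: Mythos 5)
The paper does not prove this theorem: it is quoted from Mora \cite{Mora} (Prop.~1), and the text explicitly attributes the special cases $(s,u)=(1,1)$ and $(s,u)=(1,2)$ to \cite{PT93} and \cite{Tatjer} respectively. There is therefore no proof in the paper to compare your sketch against; what you have written is a reconstruction of an argument from a cited reference.

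Read as such a reconstruction, your outline follows the standard Palis--Takens renormalisation scheme as Mora generalised it, and the overall architecture (Sternberg linearisation with genericity built into the open set of families, factorisation of the return map through the local linear dynamics and a fixed transition map, rescaling chart centred along the orbit of the tangency, collapse of the stable block, and $C^r$ estimate of the residual $E_\mu$) is the right one. Two points would nevertheless need genuine work to become a proof. First, your chart $\phi_\mu(\underline x,\underline y)=\bigl(\varepsilon_n^{-1}(\underline x-\underline x_n^\ast),\,\delta_n^{-1}(\underline y-\underline y_n^\ast)\bigr)$ is written with a single scalar $\varepsilon_n$ across the whole unstable block; you do mention ``other component-scales forced by the Jordan structure'' in prose, but the rescaling must genuinely be anisotropic, with one exponent per unstable eigenvalue modulus, and if it were truly scalar the off-diagonal terms of $A_\mu^n$ would in general blow up and the pure delay structure $(x_1,\dots,x_{u-1})$ in the last $u-1$ unstable slots would be unattainable. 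Second, and as you yourself identify, the $C^r$ smallness of $E_\mu$ is where all those scale exponents are actually forced: one must derive, from the open conditions on the family, the inequalities among $\log|\lambda_i|$, $\log|\sigma_j|$ and the chosen rescaling powers that make the quadratic term in $x_u$ the unique surviving non-linearity at leading order while every other jet of $\Psi_\mu$ and every cross-term of $A_\mu^n$, $B_\mu^n$ decays after rescaling. Naming the competition between $|\sigma_j|^n$ and $\lambda_i^n/\lambda_u^n$ is the right instinct, but carrying those estimates through uniformly over $k$-jets for every $r$ is precisely the content of Mora's theorem beyond the $(1,1)$ and $(1,2)$ cases, and the sketch leaves that step undone.
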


In particular, near the curve $\{\mu_n(a):= \gamma_n(0,\dots, 0,a), \; a\in \mathbb R\}$, the rescaled first return map $f_{\mu_n(a)}$ is $C^r$ close to the endomorphism:
\[F_a:=(x_1,\dots , x_u,y_1,\dots, y_s)\mapsto (x_u^2+a, x_1,\dots , x_{u-1},0,\dots ,0).\]

For an open and dense set of parameters $a$, the map $x\mapsto x^2+ a$ has an attracting periodic orbit from \cite{Lyufatou, GrSw}. Then its non-wandering set is the union of an attracting periodic orbit with an expanding compact set. By example \ref{delaymap}, we know that $F_a$ is AS. Moreover we can extend $F_a$ to the $n$-torus which is the product of $n$-times the one point compactification of $\mathbb R$. Its extension is analytic and $AS$.

Hence by Theorem \ref{thm:main}, the inverse limit of $F_a$ restricted to bounded orbits is conjugate to an invariant compact set of 
$f_{\mu_n(a)}^n|U_n$ with $n$ large.

In particular, if such an $a$ is fixed and then $n$ is taken large, then there exist an open set $V_n$ of $M$ and a neighborhood $W_n$ of $\mu_n (a)$ such that for every $a'\in W_n$, $f_{a'}^{nu}|V_n$ has its maximal invariant compact set conjugated to the product of $u$-times the  inverse limit dynamics of $x^2+a$ (restricted to the bounded orbits).     

For instance, when $a=0$, then the non-wandering set of $x\mapsto x^2$ consists of the attracting fixed point $0$ and the repelling fixed point $1$.  On the other hand the non-wandering set of $F_0$ is $\{0,1\}^u\times \{0\}$. We remark also that the set of points for which the orbit is bounded is homeomorphic to the square $[0,1]^u\times \{0\}$ via the first coordinate projection. Hence for $a$ small, the maximal invariant of $F_a$ is a topological $u$-cube bounded by the stable and unstable manifolds of the hyperbolic continuation of the non-wandering points.

This example would be more interesting for a parameter $a$ with positive entropy, but it present already most of the difficulties for the geometrical part of Theorem \ref{thm:main} proof (but the fact that the geometry of inverse limit space is much simpler than in the positive entropy case for instance). We will keep in mind this example.

\section{Proof of Main Theorem \ref{thm:main}}
\subsection{Sufficient conditions for the existence of a conjugacy} 
We want to find, for every $g$ which is $C^1$-close to $f$, 
 a continuous map $h\colon \arr M_f\to \arr
M_g$ which is close to the canonical injection $\arr M_f
\hookrightarrow M^\mathbb Z$ and satisfies $h\circ \af = \arr g\circ
h$. This is equivalent to find a continuous map $h_0: \arr M_f \rightarrow M$ satisfying
\begin{equation}\tag{$\mathring C_1$}
  \label{eq:h0-def}
  h_0\circ \af = g\circ h_0,
\end{equation}
and which is \emph{$C^0$-close} to the zeroth coordinate projection
$\pi_0$. This means that for every $\eta>0$ small and every $g$
sufficiently $C^1$-close to $f$, $h_0$ satisfies
\begin{equation}\tag{$\mathring C_2$}
  \label{h0-small}
  \sup_{\underline x\in \arr M_f} d(h_0(\underline x),
  \pi_0(\underline x))\le \eta.
\end{equation}

Indeed, one can construct such an $h_0$ from such an $h$ and
\emph{vice versa} writing:
\begin{displaymath}
  h_0:= \pi_0\circ h, \quad h:= ( h_0\circ \af ^n)_{n\in \mathbb Z}.
\end{displaymath}

Let us suppose the existence of such an $h$. We would like $h$ to be a
homeomorphism, so let us find sufficient conditions to ensure its
injectiveness.

In the Anosov case this follows easily from (\ref{eq:h0-def}) and
(\ref{h0-small}). In fact, if two points $\underline x$ and
$\underline y$ have the same image by $h$, then these two $f$-orbits
must be uniformly close by (\ref{eq:h0-def}) and (\ref{h0-small}), and
so they are equal by expansiveness. In the wider case of AS dynamical systems,
we shall consider the following Robbin metric on $\arr M_f$:
\begin{displaymath}
  d_\infty (\underline x, \underline y)= \sup_{i\in \mathbb Z} d(x_i,y_i).
\end{displaymath}
For every $\underline x, \underline y \in \arr M_f$, let us observe
that:
\begin{equation}
  \label{rem:semiequivalencedemetric}
  d_1(\underline x,\underline y)= \sum_i \frac{d(x_i,y_i)}{2^{|i|}}\le
  \sum_i \frac{d_\infty (\underline x,\underline y)}{2^{|i|}}= 3
  d_\infty (\underline x,\underline y).
\end{equation}
This metric enabled Robbin \cite{Ri} to find a sufficient condition on
$h$ to guarantee its injectiveness. We adapt it to our context.

Proposition 4.14 of \cite{BR12} gives a geometric interpretation of
the metric $d_\infty$. After showing that $\arr M_f$ is a finite union
of laminations, the leaves of which are intersection of stable sets
with unstable manifolds of points in $\arr \Omega_f$, we proved that
$d_\infty$-distance between any two of these leaves is positive. Moreover the restriction of $d_\infty$ to each leaf is equivalent to a Riemannian
metric on its manifold structure.

Choosing $\eta>0$ small, any continuous map $h_0\colon \arr M_f\to M$ satisfying
(\ref{h0-small}) can be written as a perturbation of
$\pi_0$ via the exponential map $\exp\colon TM\to M$ associated to the
Riemannian metric of $M$.  For the sake of simplicity, let us fix a
bundle trivialization $TM\subset M\times \R^N$, for some positive
integer $N$. As $h_0$ satisfies (\ref{h0-small}) (with $\eta$ small),
there exists $w\colon \arr M_f\to \R^N$ such that $(\pi_0(\underline
x), w(\underline x))\in TM\subset M\times \R^N$ and
\begin{equation}
  \label{eq:w-def-of-h0}
  h_0(\underline x)=\exp_{\pi_0(\underline x)}(w(\underline x)).
\end{equation}
%
%
Now let us extend the Riemannian metric
$(\langle\cdot,\cdot\rangle_x)_{x\in M}$ of $M$ to an Euclidean norm
$(\|\cdot \|_x)_{x\in M}$ on the bundle $M\times \R^N\rightarrow M$.
Let us denote by $\Lambda w\in [0,\infty]$ the $d_\infty$-Lipschitz
constant of $w$, \ie
\begin{equation}\label{deflip}
  \Lambda w:=\sup_{\underline x, \underline x'\in \arr M_f}
  \frac{\|w(\underline x)-w(\underline x')\|_{\pi_0(\underline
      x)}}{d_\infty (\underline x, \underline x')}. 
\end{equation}
Here is the Robbin condition:
\begin{equation}
  \tag{$C_3$}
  \label{h0-lip}
  \Lambda w\le  \eta.
\end{equation}

\begin{proposition}[Robbin~\cite{Ri}]\label{Robbin}
  There exists $\eta>0$ which depends only on the Riemannian metric of
  $M$, such that for every pair $f$ and $g$ of $C^1$-endomorphisms of
  $M$, if there exists $h\colon \arr M_f\rightarrow \arr M_g$
  satisfying (\ref{eq:h0-def}), (\ref{h0-small}) and (\ref{h0-lip}),
  then $h$ is injective.
\end{proposition}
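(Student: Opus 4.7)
The plan is to assume, for contradiction, that there are two distinct points $\underline{x},\underline{y}\in\arr{M}_f$ with $h(\underline{x})=h(\underline{y})$, and to leverage the shift-invariance of $d_\infty$ together with the smallness of $w$ in order to derive an inequality $d_\infty(\underline{x},\underline{y}) \le \kappa\, d_\infty(\underline{x},\underline{y})$ with some $\kappa<1$ (for $\eta$ small). First I would use the semi-conjugacy \eqref{eq:h0-def} to observe that $h(\underline{x})=h(\underline{y})$ is equivalent to
\[
  h_0\bigl(\af^n(\underline{x})\bigr) = h_0\bigl(\af^n(\underline{y})\bigr),\qquad \forall n\in\Z,
\]
and combine this with \eqref{h0-small} to conclude that $d(x_n,y_n)\le 2\eta$ for every $n$, hence $d_\infty(\underline{x},\underline{y})\le 2\eta$. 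This ensures all the points we will have to compare lie in a common normal neighborhood so that the exponential map can be used as a chart.

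Next, I would work pointwise at each index $n$. Writing $w_n^x:=w(\af^n\underline{x})$ and $w_n^y:=w(\af^n\underline{y})$, the definition \eqref{eq:w-def-of-h0} gives $\exp_{x_n}(w_n^x)=\exp_{y_n}(w_n^y)$. Since $d(x_n,y_n)$ is small, I can write $y_n=\exp_{x_n}(v_n)$ with $\|v_n\|_{x_n}=d(x_n,y_n)$, and expand $\exp_{\exp_{x_n}(v_n)}(w_n^y)$ in normal coordinates at $x_n$. A standard Riemannian estimate (using that $\exp$ is $C^2$-close to translation on balls of radius $\le 2\eta$, with constants depending only on the metric of $M$) yields
\[
  v_n = w_n^x - w_n^y + R_n,\qquad \|R_n\|_{x_n}\le C\eta\bigl(\|v_n\|_{x_n}+\|w_n^x-w_n^y\|_{x_n}\bigr),
\]
where $C>0$ depends only on the Riemannian structure of $M$. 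Rearranging, for $\eta$ small enough,
\[
  d(x_n,y_n)=\|v_n\|_{x_n}\le \frac{1+C\eta}{1-C\eta}\,\|w_n^x-w_n^y\|_{x_n}.
\]

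The third step is to invoke the Lipschitz condition \eqref{h0-lip} together with the crucial shift-invariance $d_\infty(\af^n\underline{x},\af^n\underline{y})=d_\infty(\underline{x},\underline{y})$, which follows directly from the definition \eqref{deflip} of $d_\infty$ as a supremum over all coordinates. This gives
\[
  \|w_n^x-w_n^y\|_{x_n}\le \Lambda w\cdot d_\infty(\af^n\underline{x},\af^n\underline{y})\le \eta\, d_\infty(\underline{x},\underline{y}).
\]
Combining the two displays and taking the supremum over $n\in\Z$ yields
\[
  d_\infty(\underline{x},\underline{y})\le \frac{(1+C\eta)\eta}{1-C\eta}\,d_\infty(\underline{x},\underline{y}),
\]
and the factor on the right is strictly less than $1$ once $\eta$ is fixed small enough depending only on $C$ (hence only on the Riemannian metric of $M$). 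This forces $d_\infty(\underline{x},\underline{y})=0$, i.e.\ $\underline{x}=\underline{y}$.

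The main technical obstacle is the second step: controlling the discrepancy between $\exp_{x_n}^{-1}\circ\exp_{y_n}$ and the parallel-transport-plus-translation map on small balls, with a constant $C$ that is \emph{uniform} over $M$ and independent of the chosen trivialization $TM\subset M\times\R^N$. Once that Riemannian estimate is in hand, the rest is a purely formal contraction argument, and the role of the Robbin metric $d_\infty$ is merely to absorb the shift action of $\af$, something that the metric $d_1$ of \eqref{eq:def-d-Z-prod} would not achieve.
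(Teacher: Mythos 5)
Your proposal is correct and follows essentially the same route as the paper: use \eqref{eq:h0-def} and \eqref{h0-small} to get uniform closeness of coordinates, then the change-of-exponential-chart estimate plus the Lipschitz condition \eqref{h0-lip} and the shift-invariance of $d_\infty$ to force a strict contraction, hence $d_\infty(\underline x,\underline y)=0$. The only difference is cosmetic: you carry out the pointwise estimate at every index $n$ and take a supremum at the end, whereas the paper first fixes a single index $i$ with $d_\infty(\underline x,\underline x')\le 2d(x_i,x_i')$ and works only there; both are correct and rely on the same two ingredients (the Riemannian chart-change bound with constant depending only on $M$, and $d_\infty\circ(\af^n\times\af^n)=d_\infty$).
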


\begin{proof} 
  Let $\underline x, \underline x'\in \arr M_f$ be such that
  $h(\underline x)=h(\underline x')$. Note that by (\ref{eq:h0-def})
  and (\ref{h0-small}), the point $\pi_i\circ h(\underline x)$ is
  $\eta$-close to $\pi_i(\underline x)$, for every $i$. Thus
  $\pi_i(\underline x)$ and $\pi_i(\underline x')$ are $2\eta$-close
  for every $i\in \mathbb Z$.

  Let $i\in \mathbb Z$ be such that $d_\infty(\underline x ,
  \underline x')\le 2d(x_i, x'_i)$. We recall that $\pi_i\circ
  h(\underline x)=h_0\circ \arr f^i(\underline x)=\exp_{x_i}(w\circ
  \arr f^i(\underline x))$, and so:
  \begin{displaymath}
    \exp_{x_i} (w\circ \arr f^i(\underline x))=\exp_{x'_i} (w\circ
    \arr f^i(\underline x')) 
  \end{displaymath}
  The exponential maps $\exp_{x_i}$ and $\exp_{x'_i}$ produce two
  charts centered at $x_i$ and $x_i'$, and modeled on the vector
  subspaces $T_{x_i} M$ and $T_{x'_i} M$ of $\mathbb R^N$. The
  coordinates change of these charts is the translation by the vector
  $\exp_{x_i}^{-1} x_i'$ 
  plus a linear map $L$ bounded by a constant $K$ times $d(x_i,x'_i)$, where $K$ depends only on the curvature of $M$.
    Thus, in $\mathbb R^N$, it holds:
  \begin{equation}
    \label{equadinj}
    \exp_{x_i}^{-1} x_i'+(id + L)\circ w\circ \arr f^i(\underline x)= w\circ \arr
    f^i(\underline x')+o(d(x_i,x_i')),
  \end{equation}
  We recall that $d_\infty(\underline x, \underline x')\le 2
  d(x_i,x_i')\le 4\eta$ is small.

  On the other hand by (\ref{h0-lip}):
  \begin{displaymath}
    \|w\circ \arr f^i(\underline x)-w\circ \arr f^i(\underline
    x')\|_{x_i}\le \eta d_\infty (\underline x, \underline x')
  \end{displaymath}

  Thus, replacing each term of equality (\ref{equadinj}) by these
  estimates, it holds:
  \begin{displaymath}
    \begin{split}
      \frac{d_\infty(\underline x, \underline x')}{2} &\le d(x_i,x_i')=\|\exp_{x_i}^{-1} x_i'\| \\
      &\le \|w\circ \arr f^i(\underline x)-w\circ \arr f^i(\underline x')\|_{x_i}+\|L\|\eta+o(d(x_i,x_i'))\\
      &\le \eta d_\infty (\underline x, \underline x')+Kd_\infty (\underline x, \underline x') \eta+ o(d_\infty
      (\underline x, \underline x'))
    \end{split}
  \end{displaymath}
  This implies $d_\infty (\underline x, \underline x')=0$ and so
  $\underline x=\underline x'$.
\end{proof}

On the other hand, in Proposition 5.4 of \cite{BR12} it is showed the
following:
\begin{proposition}\label{surjdeconj}
  For every $AS$ $C^1$-endomorphism $f$ of $M$, there exists $\eta>0$
  such that for every endomorphism $g$ sufficiently close to $f$, if
  there exists a $d_1$-continuous and injective $h:\; \arr
  M_f\rightarrow \arr M_g$ satisfying (\ref{eq:h0-def}) and
  (\ref{h0-small}) with $f$, $g$ and $\eta$, then $h$ is surjective
  onto $\arr M_g$.
\end{proposition}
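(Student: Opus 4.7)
The plan is to show that $K := h(\arr M_f)$ coincides with $\arr M_g$. As a preliminary, since $h$ is a continuous injection from the compact space $\arr M_f$ into the Hausdorff space $\arr M_g$, it is automatically a homeomorphism onto its image, so $K$ is compact; moreover the intertwining $h\circ \af = \arr g\circ h$ forces $\arr g(K)=K$, so $K$ is totally invariant.

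The first step is to capture the non-wandering set inside $K$. Since $\arr\Omega_f$ is hyperbolic and $g$ is $C^1$-close to $f$, inverse-limit stability of hyperbolic basic sets (already established as a preliminary in \cite{BR12}) provides a canonical conjugacy $\Phi\colon \arr\Omega_f\to\arr\Omega_g$ which is $C^0$-close to $\pi_0$. By $d_\infty$-expansiveness of $\arr\Omega_f$, such a $C^0$-small conjugacy is unique, so $h$ must restrict to $\Phi$ on $\arr\Omega_f$, and therefore $\arr\Omega_g=h(\arr\Omega_f)\subseteq K$.

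The second step propagates from $\arr\Omega_g$ to all of $\arr M_g$ via local unstable sets. For each $\underline w\in\arr\Omega_f$, the restriction of $h$ to $W^u_\varepsilon(\underline w, f)$ takes values in $W^u_\varepsilon(\Phi(\underline w), g)$, because the $d_\infty$-backward contraction characterizing unstable sets is preserved by the uniformly $C^0$-close conjugacy $h$. By Proposition~4.14 of~\cite{BR12}, both $W^u_\varepsilon(\underline w, f)$ and $W^u_\varepsilon(\Phi(\underline w), g)$ are topological manifolds of the same dimension (matched through $\Phi$). Invariance of domain then forces the image to be open in the target leaf, and compactness of $W^u_\varepsilon(\underline w,f)$ forces it to be closed; since local unstable leaves are connected disks, we obtain surjectivity onto $W^u_\varepsilon(\Phi(\underline w),g)$. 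Iterating forward under $\arr f$ and $\arr g$ then yields surjectivity onto every global unstable set in $\arr M_g$ emanating from a point of $\arr\Omega_g$. Finally, every $\underline y\in\arr M_g$ lies in such an unstable set, because its $\alpha$-limit set is contained in $\arr\Omega_g$; hence $\underline y\in K$ and $K=\arr M_g$.

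The main obstacle is the unstable-propagation step, and in particular the hypothesis that $h$ sends unstable leaves to unstable leaves of matching dimension and is a genuine continuous injection between manifolds on which invariance of domain can be invoked. The dimension matching comes from the bijection of basic pieces obtained in the first step and the local product structure of~\cite{BR12}, but one must still check compatibility with the (possibly singular) way $\pi_0$ realizes $\arr M_g$ over $M$, since the critical set of $g$ may make $\pi_0$ highly non-injective and distort the naive picture of unstable leaves downstairs; this is precisely where the full strength of the inverse-limit language is used to carry the backward-history information needed to close the argument.
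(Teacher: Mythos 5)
The paper does not give its own proof here: Proposition~\ref{surjdeconj} is quoted verbatim from Proposition~5.4 of \cite{BR12}, so the only benchmark is whether your argument is sound on its own terms. Its architecture (capture $\arr\Omega_g$ inside $K=h(\arr M_f)$ via stability and uniqueness of the conjugacy on basic pieces, then sweep out $\arr M_g$ along unstable sets) is the standard Robinson--Ma\~n\'e strategy and is the right skeleton. But the key propagation step has a genuine gap. You assert that $h\big(W^u_\varepsilon(\underline w,f)\big)$ is open and closed in the connected disk $W^u_\varepsilon(\Phi(\underline w),g)$ and hence equals it. That is false: a continuous injection between closed $u$-disks of the same dimension need not be onto (take a radial contraction $x\mapsto x/2$). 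Invariance of domain makes the image of the \emph{interior} open, while the image of the boundary sphere is merely a compact set; their union is compact but generally \emph{not} open in the target leaf, so the open-and-closed-in-connected argument does not apply. The correct replacement is a degree/homotopy argument: because $h$ is $\eta$-$C^0$-close to the inclusion (condition (\ref{h0-small}) with $\eta$ small compared to $\varepsilon$), $h$ moves the boundary sphere $\partial W^u_\varepsilon(\underline w,f)$ by less than $\eta<\varepsilon/2$, so that sphere's image is disjoint from the $\varepsilon/2$-ball around $\Phi(\underline w)$; the restriction of $h$ to the disk is then homotopic, rel the complement of that small ball, to the identity, hence has degree one and its image contains $W^u_{\varepsilon/2}(\Phi(\underline w),g)$. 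Only after that does your ``iterate forward'' step legitimately cover the global unstable set.

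A second, smaller gap: you claim that $h$ carries local unstable sets of $f$ into local unstable sets of $g$ because ``the $d_\infty$-backward contraction is preserved by the $C^0$-close conjugacy.'' A priori it is not: if $d_1(\arr f^{-n}(\underline w),\arr f^{-n}(\underline y))\to 0$ and $h$ is $\eta$-close to $\pi_0$-identity, you only get $d_1(\arr g^{-n}(\Phi(\underline w)),\arr g^{-n}(h(\underline y)))\le \text{(something)}+2\eta$, which does not tend to zero by itself. To upgrade ``eventually $2\eta$-close'' to ``belongs to the unstable set'' you must invoke the hyperbolicity and local product structure of the continuation of $\arr\Omega_i$ for $g$ (which BR12 sets up as a preliminary): a full backward $\arr g$-orbit that stays in a sufficiently small neighborhood of the hyperbolic set must lie on an unstable set, and then the $2\eta$-tracking pins it to $W^u(\Phi(\underline w),g)$. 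Relatedly, the decomposition $\arr M_g=\bigcup_i W^u$-of-basic-pieces used implicitly at the very end requires arguing through the filtration of \textsection~\ref{filtration} (which persists under $C^0$-small perturbation of $\arr f$), since $g$ itself is not assumed to satisfy Axiom~A. With these two repairs the outline closes; as written, the invariance-of-domain clause is the load-bearing flaw.
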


Hence if we prove that for every $g$ $C^1$-close to an AS endomorphism
$f$, there exists a continuous map $h_0$ satisfying (\ref{eq:h0-def}),
(\ref{h0-small}) and (\ref{h0-lip}), then Propositions \ref{Robbin}
and \ref{surjdeconj} imply that $g$ is inverse limit conjugate to $f$,
and so that $f$ is $C^1$-inverse limit stable. In other words, to prove Theorem
\ref{thm:main} it remains only to prove the following:
\begin{proposition}
  \label{part1}
  Let $f$ be a $C^1$-AS endomorphism. For every $\eta>0$ and for every
  endomorphism $g$ sufficiently $C^1$-close $f$, there exists a
  continuous map $h_0\colon \arr M_f\rightarrow M$ satisfying
  (\ref{eq:h0-def}), (\ref{h0-small}) and (\ref{h0-lip}) with $f$, $g$
  and $\eta$.
\end{proposition}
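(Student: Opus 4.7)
The plan is to translate the conjugacy equation $h_0 \circ \arr f = g \circ h_0$ into a fixed-point problem for the section $w$ defined by (\ref{eq:w-def-of-h0}), and then to solve it via the implicit function theorem (or a Newton/contraction argument) in a suitable Banach space of Lipschitz sections of the pullback bundle $\pi_0^* TM \to \arr M_f$. Concretely, I would introduce the nonlinear operator
\begin{displaymath}
\Phi_g(w)(\underline x) := \exp_{x_1}^{-1}\bigl(g(\exp_{x_0}(w(\underline x)))\bigr) - w(\arr f(\underline x)),
\end{displaymath}
defined on a small $C^0$-ball of Lipschitz sections $w$ (with $\|w\|_\infty,\Lambda w \le \eta$). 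A zero of $\Phi_g$ is precisely an $h_0$ satisfying (\ref{eq:h0-def}); the constraints (\ref{h0-small}) and (\ref{h0-lip}) will be built into the function space. Since $\Phi_f(0)=0$ and $\|\Phi_g(0)\|_\infty = O(d_{C^0}(f,g))$, once the linearization $L := D\Phi_f(0)$ is shown to be an invertible bounded operator on this Banach space with uniformly bounded inverse, the implicit function theorem produces a Lipschitz $w_g$ close to $0$.

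The content of the proposition is the invertibility of $L$, which acts on a section $w$ as
\begin{displaymath}
(Lw)(\underline x) = T_{x_0} f\cdot w(\underline x) - w(\arr f(\underline x)).
\end{displaymath}
This is the point where the AS hypothesis enters. I would decompose the problem along the hyperbolic splitting. On the stable sub-bundle $E^s$ (defined on $\Omega(f)$ and pulled back to $\arr M_f$, extended to a neighborhood by a graph-transform/cone argument as in \S\ref{sec:axiom-A-endomorphisms}), the operator $L$ is a forward contraction and is invertible by a standard Neumann series in the forward direction. On the quotient $TM/E^s$, one uses Proposition~\ref{prop:Eu-concrete} to identify along $\arr\Omega_f$ an expanding sub-bundle $E^u$ living on the inverse limit, on which $Tf$ is invertible with inverse of norm $<1$; the equation is then solved by a backward Neumann series along $\arr f$. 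The two solutions are glued by means of the strong transversality condition, which guarantees that the spaces $T\pi_0(W^u_\varepsilon)$ and $T\pi_0(W^s_\varepsilon)$ span the tangent bundle along every intersection orbit; this is what makes the splitting sufficient to produce a continuous right inverse of $L$ globally on $\arr M_f$, not just on the non-wandering set.

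The first main obstacle is precisely that $E^u$ only lives over the inverse limit (Remark~\ref{rem:Eu-abstract}), so the construction of an invariant plane field complementary to $E^s$ must be done on $\arr M_f$, which is not a manifold; this forces one to work in the category of continuous vector bundles over the compact space $\arr M_f$ and to use the Robbin metric $d_\infty$ rather than $d_1$ to control Lipschitz constants (this is what yields (\ref{h0-lip})). The second, specifically endomorphism-flavored, obstacle is the critical set: $Tf$ is not invertible on $C_f$, so the ``unstable'' direction of $L$ cannot be inverted pointwise on $M$ but only after passing to the inverse limit, where Proposition~\ref{prop:Eu-concrete} restores invertibility of $Tf|_{E^u_{\underline x}}$. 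Handling these two issues cleanly, i.e.\ constructing global continuous plane fields along $\arr M_f$ which are backward invariant modulo $E^s$ and on which $Tf$ is an isomorphism despite critical points, is the step I expect to require the most care; this is the content of \S\ref{princis}–\S\ref{princiu} in the paper's plan.

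Once $L^{-1}$ is built with norm uniform in a $C^1$-neighborhood of $f$, the remainder $\Phi_g(w) - Lw$ is $o(\|w\|)$ uniformly as $g\to f$, and the contraction mapping theorem applied to $w\mapsto w - L^{-1}\Phi_g(w)$ on the ball $\{\|w\|_\infty + \Lambda w \le \eta\}$ produces the desired $w_g$ with $\|w_g\|_\infty \le \eta$ and $\Lambda w_g \le \eta$, giving (\ref{eq:h0-def}), (\ref{h0-small}) and (\ref{h0-lip}). The continuity of $h_0 := \exp_{\pi_0(\cdot)}\circ w_g$ follows from that of $w_g$, completing Proposition~\ref{part1}.
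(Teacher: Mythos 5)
Your overall architecture matches the paper: rewrite the conjugacy equation as a fixed-point problem for a section $w$, linearize, invert the linearization by forward/backward Neumann series along stable/unstable plane fields glued with a $d_\infty$-Lipschitz partition of unity on $\arr M_f$, and finish with a contraction argument. However, there is a genuine gap at the very heart of the proposal: you write the linearization as $(Lw)(\underline x)=T_{x_0}f\cdot w(\underline x)-w(\arr f(\underline x))$ and propose to show it ``is an invertible bounded operator on this Banach space'' of $d_\infty$-Lipschitz sections. This is not well-posed: since $f$ is only $C^1$, the map $x\mapsto T_xf$ is merely $C^0$, so $L$ does \emph{not} preserve the Lipschitz class --- $Lw$ need not be $d_\infty$-Lipschitz even when $w$ is. The paper's whole proof is built around the Robinson trick that addresses exactly this: replace $T_xf$ by a smoothed family $F^\delta_x$ (the differential of a mollified $f_\delta$), invert $F^\delta_\star-\mathrm{id}$ instead of $L$, and carry out a delicate bookkeeping in which some constants (the expansion/angle bounds $K$, $\lambda$, $\|J\|_{C^0}$) are forced to be uniform in $\delta$ while others ($B_\delta$ in Lemma~\ref{LemmaRobin}, $\Lambda(L)$) blow up as $\delta\to 0$; the order of quantifiers $\epsilon,\eta,\delta,g$ in Proposition~\ref{pro:contract-property} is dictated precisely by this. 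Without introducing $F^\delta$, you cannot even formulate the Neumann series you propose, let alone estimate its Lipschitz output, so the implicit function theorem step collapses.

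Two further, smaller points of divergence. First, the paper only constructs a \emph{right} inverse $J$ of $F^\delta_\star-\mathrm{id}$ and runs a contraction in the $C^0$-norm only, with the Lipschitz bound merely preserved on a ball $\{\|v\|_{C^0}\le\eta,\ \Lambda(v)\le\epsilon\}$; your claim of a contraction on $\{\|w\|_\infty+\Lambda w\le\eta\}$ is stronger than what one can prove, since the Lipschitz seminorm is not contracted. Second, the smoothing trick has a further twist you omit: even after smoothing, $T_xf_\delta$ may be non-invertible on $C_f$, which is why the paper doubles the ambient dimension (Lemma~\ref{Finversible}) to replace the operator by a genuinely invertible $F^\delta_x$ on $\R^{2N}$; your remark that ``Proposition~\ref{prop:Eu-concrete} restores invertibility on $E^u_{\underline x}$'' is true over $\arr\Omega_f$ but does not by itself give an invertible linear model near the critical set away from $\Omega$, which is where $J_{iu}$ has to be built. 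The hard analytic content of \textsection\ref{princis}--\ref{princiu} is precisely to get the plane fields $E^s_i,E^u_i$ with an angle bound $K^{-1}$ \emph{independent of $\delta$}, which you correctly flag as delicate but whose interaction with the smoothing parameter you do not address.
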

Therefore the remaining part of this manuscript is devoted to the
proof of this proposition, by using the contraction mapping Theorem.

\subsection{A contracting map on a functional space}
\label{sec:add-not}



Let $\Gamma$ be the space of functions $w:\; \arr M_f\to \R^N$ which
are continuous for $d_1$ and $d_\infty$-Lipschitz (\ie they satisfy $
\Lambda(w)<\infty$, where $ \Lambda(w)$ is defined as in
\eqref{deflip}).
We endow $\Gamma$ with the uniform norm:
\begin{displaymath}
  \|v\|_{C^0}:=\max_{\underline x \in \arr M_f} \|v(\underline x)
  \|_{\underline x}.
\end{displaymath}

\paragraph{Equivalent conditions in the space $\Gamma$:}
We recall that $M\times \R^N\supset TM$ is a trivialization. Moreover
we have already fixed an Euclidean structure $(\|\cdot \|_x)_{x\in M}$
on $M\times\R^N$ which extends the Riemannian metric
$(\langle\cdot,\cdot\rangle_x)_{x\in M}$ on $TM$. Let $p_x\colon
\mathbb R^N\to T_xM$ be the orthogonal projection given by
$\|\cdot\|_x$.

Any $g$ sufficiently  $C^0$-close to $f$ 
induces the following map from a neighborhood $N_\Gamma$ of $0\in
\Gamma$ into $\Gamma$:
\begin{displaymath}
  \Phi_f^g(w):= \underline x \mapsto \exp^{-1}_{x_0}
  \Big(g\circ \exp_{x_{-1}}\big( p_{x_{-1}} \circ w\circ \arr
  f^{-1}(\underline x) \big)\Big), \quad \forall w\in N_\Gamma,
\end{displaymath}
where $x_i=\pi_{i}(\underline x)$ for every $i$, as defined in
\S~\ref{sec:inverse-limits}.

For every $\eta$ small, and for every $g$ sufficiently close to $f$,
to find $h_0$ satisfying conditions \eqref{eq:h0-def},
(\ref{h0-small}) and (\ref{h0-lip}) is equivalent to find $w\in
\Gamma$ satisfying
\begin{align}
  \tag{$C_1$}
  \Phi_f^g(w)=w\\
  \tag{$C_2$} \|w\|_{C^0}
  \le \eta\\
  \tag{$C_3$} \Lambda(w)\le \eta
\end{align}
Indeed, by $(C_1)$ any such $w$ satisfies $w(\underline x)\in T_{x_0}
M$, for every $\underline x\in \arr M_f$. It is then easy to remark that
$h_0:\underline x\mapsto \exp_{x_0} ( w(\underline x))$ satisfies
$(\mathring C_1)$ and $(\mathring C_2)$. 

\paragraph{Strategy:} To solve this (implicit) problem, let us regard
the partial derivative of $\Phi_f^f$ at $0\in \Gamma$ with respect to
$w\in \Gamma$:
\begin{displaymath}
  D_0\Phi_f^f = w\mapsto \big[\underline x\mapsto  T_{x_{-1}}f(
  p_{x_{-1}}\circ w\circ \af^{-1}(\underline x))\big]. 
\end{displaymath}

The first difficulty that appears is the following: if $f$ is only
$C^1$, in general the map $D\Phi_f^f$ does not leave invariant the
space $\Gamma$. In the $C^2$-case, Robbin's strategy in \cite{Ri} consists in
solving ($C_1$)-($C_2$)-($C_3$) by finding a right inverse for
$D\Phi_f^f-id$, and then by following a classical proof of the
implicit function theorem which uses the contraction mapping Theorem.

A second difficulty which will appear is that $Tf$ is possibly non-invertible, and this will give us manyd ifficulties to construct this right inverse with bounded norm. To eliminate some of these, in Lemma \ref{Finversible}, we will suppose $N$  twice larger than necessary to embed $TM$ into $M\times \R^N$.

\paragraph{Robinson trick.}

%

If $f$ is not $C^2$ but only $C^1$, then there is a continuous family of
$C^0$-maps  $(x\mapsto F^\delta_x )_\delta$ from $M$ in the space of linear maps of $\R^N$, such that $F^0_x=D_xf\circ p_x$ and each $F^\delta$ is $C^\infty$, for $\delta>0$.

 Such a family is easily constructed by smoothing $f$ to a map $f_\delta$ (by using the classical technique of convolutions with
mollifier functions on $f$), and then looking at its differential.

Let us regard the following linear bundle morphism $F^\delta$:
\begin{equation}
  \label{eq:F-definition}
  \begin{array}{cccc}  
    F^\delta\colon& \arr M_f\times \mathbb R^N&\to& \arr
    M_f\times \mathbb R^N\\ 
    &(\underline x, v)&\mapsto &(\arr f(\underline
    x),F^\delta_{x_0}(v))
  \end{array}
\end{equation}  
Note that $F^\delta$ is still over $\af$,
\ie the following diagram commutes:
\begin{displaymath}
  \xymatrix{
    \arr M_f\times \R^N\ar[d]\ar[r]^{F^\delta} & \arr M_f\times \R^N\ar[d] \\
    \arr M_f\ar[r]^{\af} & \arr M_f \\
  }
\end{displaymath}

\begin{lemma}\label{Finversible} If $N$ is large enough, then we can suppose moreover that $F^\delta_x$ is invertible for every $\delta>0$ and $x\in M$.
\end{lemma}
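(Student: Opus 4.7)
The strategy is to keep the structure of the family from the preceding paragraph, namely
\[
F^\delta_x \;=\; D_xf_\delta\circ p_x \;+\; \delta\,G_x ,
\]
where $(G_x)_{x\in M}$ is a smooth family of \emph{invertible} linear maps of $\R^N$ that I must construct. The exact equality $F^0_x=D_xf\circ p_x$, the joint continuity in $(x,\delta)$, and the $C^\infty$-dependence in $x$ for $\delta>0$ are then all automatic, because the perturbation $\delta\,G_x$ is $C^\infty$ in $x$ and vanishes at $\delta=0$. What remains is the pointwise invertibility of $F^\delta_x$ for every $\delta>0$ and every $x\in M$.

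The invertibility of $F^\delta_x$ is equivalent to $-\delta$ not being an eigenvalue of $G_x^{-1}\circ(D_xf_\delta\circ p_x)$. Hence it would be enough to exhibit a smooth family $G$ such that for every $x\in M$ the operator $G_x^{-1}\circ(D_xf\circ p_x)$ has no negative real eigenvalue; by continuity of the spectrum in $(x,\delta)$ and compactness of $M$, this property persists after perturbation, so that choosing $\delta_0>0$ small enough and reparameterising $\delta\mapsto \delta/\delta_0$ yields the desired family on $(0,1]$.

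The hypothesis ``$N$ large enough'' (say $N\ge 2\dim M$) is used precisely to carry out the construction of such a $G$. When $N\ge 2\dim M$, the normal complement $T_xM^{\perp}$ has rank $N-m\ge m=\dim M$, so we are in the stable range and there is enough room to build $G_x$ smoothly in $x$. Away from the critical set $C_f$, $D_xf$ has full rank and I can take $G_x$ to coincide on $T_xM$ with $D_xf$ plus a small invertible correction and on $T_xM^{\perp}$ with an isomorphism onto a complement of $\mathrm{Im}(D_xf)$; this makes the spectrum of $G_x^{-1}(D_xf\circ p_x)$ concentrate near $1$ and in particular keeps it off $\R_{<0}$. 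Near $C_f$ the extra normal dimensions give enough transversal freedom to correct $G_x$ (using, for example, a block resembling a complex structure on a $2$-plane inside $T_xM^\perp$) while preserving this spectral property. The local choices are glued by a partition of unity subordinate to a finite cover of $M$; the stable-range hypothesis $N\ge 2\dim M$ is what guarantees that no global topological obstruction arises in the patching and that $G_x$ can be kept invertible throughout.

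The main obstacle is precisely this global construction across the critical set of $f$: there the rank of $D_xf$ drops and eigenvalues of $G_x^{-1}(D_xf\circ p_x)$ can migrate toward $0$, potentially colliding with $-\delta$ for small $\delta>0$. Using a large $N$ and the partition-of-unity/complex-structure correction above is exactly what rules out these collisions and makes the spectrum miss $\R_{<0}$ uniformly in $x$.
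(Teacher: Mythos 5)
Your proposal takes a genuinely different route from the paper, and it has real gaps.

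The paper's proof is a short explicit construction: it replaces $N$ by $N'=2N$, writes $\R^{N'}=\R^N\times\R^N$, extends $p_x$ by $(v_1,v_2)\mapsto p_x(v_1)$, and defines
\[
F^\delta_x(v_1,v_2)=\bigl(T_xf_\delta\circ p_x(v_1)+\delta v_2,\ \delta v_1\bigr),
\]
i.e.\ the block matrix $\left(\begin{smallmatrix} A & \delta I \\ \delta I & 0 \end{smallmatrix}\right)$ with $A=T_xf_\delta\circ p_x$. Its inverse is read off directly ($v_1=w_2/\delta$, $v_2=(w_1-A\,w_2/\delta)/\delta$). Translated into your decomposition $F^\delta_x=T_xf_\delta\circ p_x+\delta G_x$, the paper chooses for $G_x$ the fixed swap involution $(v_1,v_2)\mapsto(v_2,v_1)$, independent of $x$; then $G_x^{-1}\circ (T_xf_\delta\circ p_x)$ is \emph{nilpotent} (it sends $(v_1,v_2)\mapsto(0,A v_1)$, so its square vanishes), so its spectrum is identically $\{0\}$ for every $x$ and every $\delta$, and the eigenvalue criterion you set up is satisfied trivially, with no spectral-continuity argument needed. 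The doubling $N'=2N$ is there to create the room for this block structure, not to enter the stable range.

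The gaps in your approach are concrete. First, the construction of $G_x$ near the critical set is not carried out; ``a block resembling a complex structure on a $2$-plane inside $T_xM^\perp$'' and ``the stable-range hypothesis guarantees that no global topological obstruction arises'' are assertions, not arguments, and it is exactly this step that is hard. Second, even if you produce a smooth family $G_x$ with $\spec\bigl(G_x^{-1}(D_xf\circ p_x)\bigr)\cap\R_{<0}=\varnothing$, your continuity argument does not close: $0$ is always an eigenvalue of $G_x^{-1}(D_xf_\delta\circ p_x)$ with geometric multiplicity at least $N-\dim M$ (since $\operatorname{rank}(D_xf_\delta\circ p_x)\le\dim M$), and $0$ lies on the boundary of $\C\setminus\R_{<0}$. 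After the mollification $f\rightsquigarrow f_\delta$, some of the eigenvalues near $0$ can be pushed into $\R_{<0}$ at a rate comparable to $d_{C^1}(f,f_\delta)$, which is not controlled relative to $\delta$; so there is no reason $-\delta$ should miss the perturbed spectrum, and the reparameterisation $\delta\mapsto\delta/\delta_0$ does not help, since it rescales both $-\delta$ and the size of the perturbation together. You noticed this obstacle yourself in your last paragraph, but you resolve it only by asserting that the unconstructed correction rules it out. The paper's choice of a nilpotent $G_x^{-1}(T_xf_\delta\circ p_x)$ eliminates the issue at the root: there is nothing in the spectrum besides $0$, for any $x$, any $\delta$, critical or not.
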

\begin{proof} 
Put $N'=2N$. Let $f_\delta$ be a smooth endomorphism $C^1$-close $f$ when $\delta$ is small. 
We extend the projection $p_x\colon \R^N\to T_x M$ to $\R^{N'}= \mathbb R^N\times \mathbb R^N$ by 
\[p_x\colon \mathbb R^N\times \mathbb R^N\ni (v_1,v_2)\mapsto p_x(v_1)\in \mathbb R^N.\] Also we identify $\R^N$ to $\R^N\times \{0\}\subset \R^{N'}$. Let us regard:
\[F^\delta_ x\colon \R^{N'}\ni v= (v_1,v_2)\mapsto (T_xf_\delta\circ p_x(v_1)+\delta v_2,\delta v_1)= T_xf_\delta\circ p_x(v) +\delta(v_2,v_1)\in \R^{N'} .\]
For every $x\in M$ and $\delta>0$, such a map is invertible, depends smoothly on $x$ and is $\delta$-close to $\R^{N'}\ni v\in \mathbb R^{N'} \rightarrow T_x f_\delta\circ p_x(v)\in \mathbb R^{N'}$. 
\end{proof}
\begin{corollary} The map $F^\delta$ is a homeomorphism of $\arr M_f\times \R^N$, for every $\delta>0$.\end{corollary}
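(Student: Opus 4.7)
The plan is to verify directly that $F^\delta$ admits a continuous two-sided inverse. Three ingredients will do it: (a) $\af$ is a homeomorphism of $\arr M_f$ (recalled in \textsection\ref{sec:inverse-limits}), (b) by Lemma \ref{Finversible} each fibered map $F^\delta_x\colon \R^N\to\R^N$ is a linear isomorphism, and (c) the map $x\mapsto F^\delta_x$ is continuous from $M$ into $\mathrm{End}(\R^N)$. Since inversion of matrices is continuous on the open set of invertible operators, and since $M$ is compact, the family $x\mapsto (F^\delta_x)^{-1}$ is then also continuous (and its norm is uniformly bounded on $M$).

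Next I would exhibit the inverse explicitly. If $\underline y=\af(\underline x)$, then the zeroth coordinate of $\underline x$ is $y_{-1}=\pi_{-1}(\underline y)$. Hence I would set
\begin{displaymath}
G^\delta\colon \arr M_f\times \R^N \to \arr M_f\times \R^N, \qquad
G^\delta(\underline y, w):=\bigl(\af^{-1}(\underline y),\, (F^\delta_{y_{-1}})^{-1}(w)\bigr).
\end{displaymath}
A direct computation using $\pi_0\circ\af^{-1}=\pi_{-1}$ shows $F^\delta\circ G^\delta=\mathrm{id}$ and $G^\delta\circ F^\delta=\mathrm{id}$, so $F^\delta$ is bijective with inverse $G^\delta$. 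Continuity of $F^\delta$ is immediate from the continuity of $\af$ and of $(x,v)\mapsto F^\delta_x(v)$; continuity of $G^\delta$ follows from (a), (c), and the continuity of evaluation. Therefore $F^\delta$ is a homeomorphism of $\arr M_f\times\R^N$.

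I do not foresee a real obstacle here: the corollary is essentially a packaging of Lemma \ref{Finversible} together with the standard fact that $\af$ is a homeomorphism of the inverse limit. The only point that deserves a line of justification is the continuous dependence of $(F^\delta_x)^{-1}$ on $x$, which follows at once from compactness of $M$ and the continuity of matrix inversion.
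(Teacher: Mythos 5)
Your proof is correct and is exactly the argument the paper leaves implicit: the corollary is stated without proof immediately after Lemma \ref{Finversible}, and the explicit inverse $G^\delta(\underline y,w)=(\af^{-1}(\underline y),(F^\delta_{y_{-1}})^{-1}(w))$ together with continuity of $\af^{\pm 1}$, of $x\mapsto F^\delta_x$, and of matrix inversion is precisely what one needs to spell out.
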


For every $v\in \Gamma$, the following map is well defined:
\begin{displaymath}
  {F^\delta}_\star(v):= \underline x\in \arr M\mapsto {F^\delta}(v(\arr
  f^{-1}(\underline x))). 
\end{displaymath}
This map is continuous, linear and for $\delta>0$ it is bijective.

Moreover, we remark that ${F^\delta}_\star(v)$ belongs to $\Gamma$.
%
%
%
%
%

Now, let us suppose the existence of a right inverse $J$ of
${F^\delta}_\star
-id$.  This means:
\begin{displaymath}
  \big({F^\delta}_\star -id\big)J=id.
\end{displaymath}

We notice that (${C_1}$) is equivalent to find a fixed point $\phi\in
\Gamma$ of the operator:
\begin{displaymath}\Big[\big({F^\delta}_\star-id\big)-\big({\Phi_f^g}
  -id\big)\Big] \circ J=id -({\Phi_f^g} -id)\circ J,
\end{displaymath}
such that $w:= J(\phi)$ satisfies (${C_2}$) and (${C_3}$).


We construct $J$ in \textsection \ref{section de J}. From its
construction we get
\begin{proposition}
  \label{pro:contract-property}
  For every  $\epsilon>0$ and every $\eta>0$ sufficiently small w.r.t. $\epsilon$, there
  exists $\delta>0$ small enough such that for every $g$ $C^1$-close
  enough to $f$, the operator
  \begin{displaymath}
    \Big[\big({F^\delta}_\star-id\big)-\big({\Phi_f^g}
    -id\big)\big]\circ J= ({F^\delta}_\star -
    {\Phi_f^g}) \circ J
  \end{displaymath}
  is well defined on a $2\eta$-neighborhood of $0$ in $\Gamma$ and is
  $C^0$-contracting.
  
  Moreover, $\norm{\big({F^\delta}_\star-{ \Phi_f^g}\big) J
    v}_{C^0}\le \eta$ and
  $\Lambda\big(\big({F^\delta}_\star-{\Phi_f^g}\big)
  Jv\big)\leq\epsilon$, whenever $\norm{v}_{C^0}\le \eta$ and
  $\Lambda(v)\leq\epsilon$.
\end{proposition}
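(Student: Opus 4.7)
The plan is to reduce Proposition \ref{pro:contract-property} to a Banach-space estimate on the operator $T := (F^\delta_\star - \Phi_f^g) \circ J$, using the fact that its ingredients cancel to first order when $\delta$ and $d_{C^1}(f,g)$ are small. The starting point is the computation that the differential of $\Phi_f^f$ at $0\in \Gamma$ equals exactly $F^0_\star$,
$$D_0\Phi_f^f(w) = \bigl[\underline x\mapsto T_{x_{-1}}f\circ p_{x_{-1}}\circ w(\af^{-1}\underline x)\bigr] = F^0_\star w,$$
together with $\Phi_f^f(0)=0$, since $f(x_{-1})=x_0$ for every $\underline x\in \arr M_f$ and $\exp_{x_0}^{-1}(x_0)=0$.

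From here I would use the first-order expansion
$$(F^\delta_\star - \Phi_f^g)(w) = -\Phi_f^g(0) + (F^\delta_\star - D_0\Phi_f^g)(w) - R_g(w),$$
where $R_g$ is the Taylor remainder of $\Phi_f^g$ at $0$, and argue that each summand can be made arbitrarily small by tuning parameters. The term $\Phi_f^g(0) = \exp_{x_0}^{-1}(g(x_{-1}))$ is $C^0$-small in $d_{C^0}(f,g)$ since it vanishes at $g=f$; the operator norm $\|D_0\Phi_f^g - F^0_\star\|$ is small when $g$ is $C^1$-close to $f$, by direct differentiation; the difference $F^\delta_\star - F^0_\star$ tends to zero as $\delta\to 0$ by construction of the mollified field; finally $R_g(w) = o(\|w\|_{C^0})$ uniformly, with modulus governed by the continuity modulus of $Dg$. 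Substituting $w = Jv$ and using that $J$ has bounded $C^0$-norm on $\Gamma$ (a property forthcoming from the construction of $J$ in \textsection\ref{section de J}), one gets
$$\|T(v)\|_{C^0} \leq \|\Phi_f^g(0)\|_{C^0} + \|F^\delta_\star - D_0\Phi_f^g\|\cdot\|J\|\cdot\|v\|_{C^0} + o(\|Jv\|_{C^0}).$$
Fixing $\epsilon$ and $\eta$ first, then $\delta$, and finally $d_{C^1}(f,g)$ small enough, each of the three summands is at most $\eta/3$ on the ball $\|v\|_{C^0}\leq 2\eta$; applying the same expansion to $T(v_1)-T(v_2)$ produces a Lipschitz constant strictly less than $1$, giving the required $C^0$-contraction.

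For the $\Lambda$-estimate, the key observation is that $d_\infty$ is invariant under the shift $\af$, so $F^\delta_\star$ and $\Phi_f^g$ act on $\Gamma$ with controlled action on the $\Lambda$-seminorm. Hence $\Lambda(T(v))$ is controlled by the product of the small operator-norm factor $\|F^\delta_\star - D_0\Phi_f^g\|$ with $\Lambda(Jv)$, plus a remainder contribution. This is where I expect the main obstacle: the property that, for $v$ with $\|v\|_{C^0}\leq\eta$ and $\Lambda(v)\leq\epsilon$, $Jv$ still has $\Lambda$-seminorm bounded linearly in $\epsilon$, is precisely the point where the geometry of $\arr M_f$ must enter, because a uniform control of $(F^\delta_\star-id)^{-1}$ with respect to $\Lambda$ requires exploiting the lamination structure of $\arr M_f$ by intersections of unstable manifolds with stable sets of points in $\arr\Omega_f$, as recalled from Proposition~4.14 of \cite{BR12}, and in particular requires handling the failure of a pointwise inverse of $Tf$ near critical points. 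Once this Lipschitz bound on $J$ is granted, the four smallness estimates above combine to give both $\|T(v)\|_{C^0}\le\eta$ and $\Lambda(T(v))\le\epsilon$ on the prescribed neighborhood, completing the proof of Proposition \ref{pro:contract-property}.
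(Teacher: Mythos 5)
Your $C^0$ part is in line with the paper's: computing $(F^\delta_\star-\Phi_f^g)(0)(\underline x)=-\exp_{x_0}^{-1}(g(x_{-1}))$ to control the constant term, computing $D_0(F^\delta_\star-\Phi_f^f)(v)(\underline x)=(F^\delta_{x_{-1}}-T_{x_{-1}}f)(p_{x_{-1}}\circ v\circ\af^{-1}(\underline x))$ to see that the derivative tends to $0$ with $\delta$, and combining with the $\delta$-uniform bound $\|J\|_{C^0}\le 2CDq/(1-\lambda)$ from Lemma~\ref{normJC0}. That much is sound and essentially the argument the authors give.

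The gap is in the $\Lambda$-estimate, and you have correctly flagged it as the hard point, but your expectation of what the missing lemma should say is wrong in a way that matters. You write that one needs ``$Jv$ still has $\Lambda$-seminorm bounded linearly in $\epsilon$'', i.e.\ $\Lambda(Jv)\lesssim\Lambda(v)$ with a $\delta$-independent constant. That is false and cannot be made to hold: differentiating $F^\delta$ in the base point introduces the derivative of the mollified family $x\mapsto F^\delta_x$, whose Lipschitz constant in $x$ blows up as $\delta\to0$. The correct estimate (Lemma~\ref{LemmaRobin} in the paper) is of the form
\begin{displaymath}
\Lambda(Jv)\le A\,\Lambda(v)+B_\delta\,\norm{v}_{C^0},
\end{displaymath}
with $A$ independent of $\delta$ but $B_\delta\to\infty$ as $\delta\to0$. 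The reason this is enough is precisely the Robinson trick: the extra $B_\delta\norm{v}_{C^0}$ term is tamed not by shrinking $\delta$ but by shrinking $\eta$ (after $\delta$ has been fixed), so that $B_\delta\eta$ is as small as one wants. Your proposed order of tuning (``Fixing $\epsilon$ and $\eta$ first, then $\delta$'') would not survive this, because the bound on $\eta$ has to defeat a $\delta$-dependent constant. Also, your speculation that the lamination structure of $\arr M_f$ and the critical set of $f$ must enter at this stage is misplaced; what is actually used to prove the lemma is the $\delta$-uniform exponential decay $\|(F^\delta)^n|E_i^\sigma\|\le D\lambda^{|n|}$ of Lemma~\ref{normJC0} together with the Leibniz decomposition of $(F^\delta)^n\pi_i^\sigma$ into a Lipschitz-in-$v$ term and a Lipschitz-in-base-point term; the geometry of $\arr M_f$ and the critical set are already baked into the construction of the bundles $E_i^{s},E_i^{u}$ and of $J$ in Proposition~\ref{principal}, not into this estimate.
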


Together with Theorem \ref{part1}, this implies the inverse limit
structural stability of AS-endomorphisms.

\section{Construction of the right inverse \texorpdfstring{$J$}{J} of \texorpdfstring{${F^\delta}_\star
  -id$}{F-id}}
\label{section de J}
We recall that $f$ denotes an AS-endomorphism of a compact manifold
$M$. Let $\Omega(\arr f)$ be the non-wandering set of $\arr f$. It is
shown in \cite{BR12} that:
$$\Omega(\arr  f)=\arr M_f \cap \Omega(f)^\mathbb Z .$$ 

Moreover, the non-wandering set  $\Omega(\arr f)$ is the disjoint union of compact, transitive subsets $(\arr \Omega_i )_i$, called \emph{basic pieces}. The family of
all basic pieces is finite and called the \emph{spectral decomposition} of
$\Omega(\arr f)$.

For every basic piece $\arr \Omega_i$, we define the \emph{stable} and
\emph{unstable sets} of $\arr \Omega_i$, respectively, by
\begin{displaymath}
  W^s(\arr \Omega_i)=\{\underline x\in \arr M: \; d(\arr f^{n}(\underline
  x),\arr \Omega_i)\rightarrow 0, \text{ as }n\rightarrow +\infty\} 
\end{displaymath}
\begin{displaymath}
  W^u(\arr \Omega_i)=\{\underline x\in \arr M: \; d({\arr f}^{n}(\underline
  x),\arr \Omega_i)\rightarrow 0, \text{ as }n\rightarrow -\infty\} 
\end{displaymath}
The geometry of these sets is studied in \cite{BR12}.

Given two basic pieces $\arr \Omega_i$ and $\arr \Omega_j$, we write
$\arr \Omega_i\succ \arr \Omega_j$ if $W^u(\arr \Omega_i)$ intersects
$W^s(\arr \Omega_j)\setminus \arr \Omega_j$. In \cite{BR12} it is shown that
for any AS-endomorphism $f$, the relation $\succ$ is an order
relation. This enables us to enumerate the 
spectral decomposition $(\arr \Omega_i)_{i=1}^q$ of $\Omega(\arr f)$
in such a way that $\arr \Omega_i\succ \arr \Omega_j$ implies $i>j$.

We recall that a \emph{filtration adapted to $(\arr \Omega_i)_{i}$} is
an increasing sequence of compact sets
\[\varnothing=M_0\subset M_1\subset\cdots \subset M_i\subset \cdots
\subset M_q=\arr M_f\] such that for $q\ge i\ge 1$:
\[\bigcap_{n\in \Z} \arr f^n(M_{i}\setminus M_{i-1})= \arr
\Omega_i\quad \text{ and }\quad \arr f(M_i)\subset int(M_i).\] The
existence of such a filtration is shown in Corollary 4.7 of
\cite{BR12}. \label{filtration}

The following proposition is formally similar to the one used by
Robbin \cite{Ri} or Robinson \cite{Rs}, but it is technically much more
complicated and its proof requires to be handled very carefully. New ideas will be needed. The proof will be done
in \textsection \ref{propprincipal}-\ref{princis}-\ref{princiu} and will use \textsection\ref{analysis}.
\begin{proposition}
  \label{principal}
  There exist $\lambda\in(0,1)$, $K>0$ and an open cover
  $(W_i)_{i=1}^{q}$ of $\arr M_f$, where each $W_i$ is a neighborhood
  of $\arr \Omega_i$, and such that for every $\delta>0$, there
  exist vector subbundles $E_i^u$ and $E_i^s$ of the trivial bundle $W_i\times\R^N\to W_i$
  satisfying the following properties:
  \begin{itemize}
  \item[$(i)$] For every $\underline x\in W_i\cap \arr f^{-1}(W_i)$,
    the map $F^\delta$ sends $E_{i\underline x}^s$ and $E_{i\underline
      x}^u$ onto $E_{i\arr f(\underline x)}^s$ and $E_{i\arr
      f(\underline x)}^u$ respectively.
  \item[$(ii)$] For any $k\ge j$ and every ${\underline x}\in W_k\cap
    \arr f^{-1}(W_j)$, the following inclusions hold:
    \begin{displaymath}
      F^\delta (E^s_{k\underline x})\subset E^s_{j\arr f(\underline x)},\quad F^\delta(E^u_{k\underline x})\supset E^u_{j\arr f(\underline x)}.
 \end{displaymath}
  \item[$(iii)$] $E^s_{i\underline x}\oplus E^u_{i\underline x}=\R^N$,
    for any $i\in\{1,\ldots,q\}$ and every $\underbar x\in W_i$; the angle between $E_i^s$ and $E_i^u$ is bounded from
    below by $K^{-1}$.
  \item[$(iv)$]   The subbundles $E_i^u$ and $E_i^s$ are $d_1$-continuous and
    $d_\infty$-Lipschitz.
  \item[$(v)$] For every $i$ and any $\underline x\in W_i$, it holds
    \begin{displaymath}
      \|{F^\delta}(v^u)\|\ge \|v^u\|/K, \quad\forall v^u\in
      E^u_{i\underline x}. 
    \end{displaymath}
  \item[$(vi)$] For every $q'$, if $\underline x\in W_{q'}$ then $\arr f(\underline x)\notin \cup_{j>q'} W_j$ and $\cap_{n\in \mathbb Z} \arr f^n(W_{q'})=\arr \Omega_{q'}$.
  \item[$(vii)$] For every $i$, any $\underline x$ in a
    neighborhood of $\arr \Omega_i$ which does not depend on $\delta$,
    and for all $v^s\in E^s_{i\underline x}$ and $v^u\in
    E^u_{i\underline x}$, it holds:
    \begin{displaymath}
      \|{F^\delta}(v^s)\|\le \lambda \|v^s\|\quad \mathrm{and}\quad
      \|{F^\delta}(v^u)\|\ge \|v^u\|/\lambda.
    \end{displaymath}
  \end{itemize}
\end{proposition}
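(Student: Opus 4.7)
The plan is to construct the splitting piece by piece, starting from the existing hyperbolic splitting on each $\arr\Omega_i$, propagating it to a neighborhood via a cone-field / graph-transform argument for the homeomorphism $F^\delta$, and finally using the filtration to arrange the inter-piece compatibility.

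First I would fix the open cover. Using the filtration $\varnothing=M_0\subset\cdots\subset M_q=\arr M_f$ recalled just above the statement, I would take $W_i$ as a small open neighborhood of $\arr\Omega_i$ contained in $M_i\setminus \arr f(M_{i-1})$, shrunk enough that $\bigcap_n \arr f^n(W_i)=\arr\Omega_i$ and so that, using the order $\succ$, $\arr f(W_i)\cap W_j=\varnothing$ whenever $j>i$; this directly gives (vi). Enlarging the $W_i$ within these constraints so that every $\underline x\in\arr M_f$ has an orbit passing through their union yields the required cover.

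Next, on each basic piece $\arr\Omega_i$ I would build an $F^\delta$-invariant splitting $\R^N=E^s_i\oplus E^u_i$. Axiom A provides the stable bundle inside $T_{\pi_0(\underline x)}M$, and Proposition \ref{prop:Eu-concrete} provides the unstable bundle, giving together a $Tf$-hyperbolic splitting of $TM$ over $\arr\Omega_i$. To extend this to a splitting of the trivial bundle $\R^N$, I would assign the orthogonal complement of $T_{\pi_0(\underline x)}M$ inside $\R^N$ to the stable summand; by Lemma \ref{Finversible} the map $F^\delta$ sends these auxiliary directions to vectors of norm $O(\delta)$, so for $\delta$ small the splitting is hyperbolic for $F^\delta$ on $\arr\Omega_i$ with bounds close to those of $Tf$. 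To propagate the splitting to the neighborhood $W_i$ I would then use the standard cone-field argument: pick complementary open cones $C^s,C^u\subset\R^N$ about the splitting on $\arr\Omega_i$; by continuity and the invertibility of $F^\delta$ (Lemma \ref{Finversible}), after shrinking $W_i$ the cone $C^u$ is forward-$F^\delta$-invariant and expanding while $C^s$ is backward-$F^\delta$-invariant and contracting, as long as the orbit stays in $W_i$. Then $E^s_i:=\bigcap_{n\geq 0}(F^\delta)^{-n}C^s$ and $E^u_i:=\bigcap_{n\geq 0}(F^\delta)^n C^u$, giving (i), (iii), (v) and (vii) from standard estimates; the $\delta$-independence of the neighborhood in (vii) follows from $F^\delta\to F^0$ uniformly and the openness of the hyperbolic bounds on $\arr\Omega_i$. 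The Lipschitz regularity (iv) is delivered by the analytic framework of \S\ref{analysis}, which handles bundle sections over the non-manifold space $\arr M_f$.

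The main obstacle will be the inter-piece compatibility (ii): when $\underline x\in W_k$ with $\arr f(\underline x)\in W_j$ and $k\geq j$, one needs $F^\delta(E^s_{k,\underline x})\subset E^s_{j,\arr f(\underline x)}$ and $F^\delta(E^u_{k,\underline x})\supset E^u_{j,\arr f(\underline x)}$. I would address this by a downward induction on the index: once $E^s_k,E^u_k$ have been constructed for all $k>j$, I would enlarge the cone $C^s$ defining $E^s_j$ so that it contains $F^\delta(E^s_k|_{W_k\cap \arr f^{-1}(W_j)})$, and dually narrow $C^u$ so that it is contained in $F^\delta(E^u_k|_{W_k\cap \arr f^{-1}(W_j)})$, for every such $k$. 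This is possible because on the transition region the $E^s_k,E^u_k$ are close to the hyperbolic directions on $\arr\Omega_k$, whose forward images under $F^\delta$ approach the hyperbolic directions over $\arr\Omega_j$. Property (vi) ensures orbits traverse the cover in a strictly decreasing sequence of indices, so the induction is finite and the widening is bounded by $q$ applications; in particular the strict hyperbolicity (vii), only required on a $\delta$-independent neighborhood of $\arr\Omega_j$ itself, is not destroyed by the cone enlargement, which only affects the outer part of $W_j$.
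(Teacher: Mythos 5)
Your plan has the right flavor in places (graph transform / cone ideas, filtration for the transition property, downward induction for compatibility), but it misses the \emph{central} difficulty the paper flags and which drives the whole construction: the constant $K$ in (iii) and (v) must be \emph{independent of $\delta$}, while the inverse of $F^\delta$ blows up as $\delta\to 0$ whenever $f$ has critical points. A standard cone-field argument is symmetric in $s$ and $u$ — one inverts $F^\delta$ to propagate $C^s$ backward, or equivalently the $C^u$-contraction rate under push-forward depends on $\|(F^\delta|_{E^u})^{-1}\|$ — and all the resulting bounds on cone widths, angles, and expansion constants then depend on $\delta$. Your proposal never explains where a $\delta$-independent $K$ comes from on the outer part of $W_i$, and in fact it cannot with the tools you invoke. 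The paper instead makes the proof deliberately \emph{asymmetric}: it first builds $E^s_i$ on neighborhoods of the \emph{entire} stable sets $W^s(\arr\Omega_i)$ (Proposition~\ref{fondaprops}), and then builds $E^u_i$ inside those neighborhoods (Proposition~\ref{fondapropu}) strictly by push-forward, invoking the \emph{strong transversality condition} (see the estimate~\eqref{transversilty3}) to get a uniform lower bound on $\|F^\delta(u)\|$ for any $u$ making a definite angle with $E^s$. That is precisely where the hypothesis of the main theorem enters; your argument does not use strong transversality at all, which is a decisive gap.

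A second, related issue is the size of the $W_i$. You build small cones on $\arr\Omega_i$ and propagate them to a neighborhood $W_i$, shrinking $W_i$ as needed; but the $W_i$ must be an \emph{open cover of all of $\arr M_f$}, hence must contain the transit regions far from every basic piece. On such regions the hyperbolic estimates over $\arr\Omega_i$ do not apply (this is exactly why (vii) is stated only on a small $\delta$-independent neighborhood of $\arr\Omega_i$, whereas (v) is the weaker bound over all of $W_i$), and the persistence-of-cones argument you describe collapses. The paper handles this by building $E^s_i$ over a neighborhood of $W^s(\arr\Omega_i)$ using the filtration and a contraction on sections, gluing with a partition of unity (Corollary~\ref{partition1}), and extending by pull-back along the filtration; the compatibility (ii) is then obtained not by widening cones but by projecting onto the previously built $E^s_j$ using Claim~\ref{claim71} (a $\lambda$-lemma / transversality consequence), and the dual statement for $E^u$ again uses transversality and push-forward. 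You should rework your construction to separate the $s$- and $u$-constructions, first build $E^s_i$ over $W^s(\arr\Omega_i)$, and explicitly bring in the strong transversality condition to obtain the $\delta$-uniform angle and expansion bounds; as written, your cone-field argument would only prove a version of the proposition with $K=K(\delta)\to\infty$, which is useless for the rest of the paper (compare the dependence in Lemma~\ref{normJC0} and inequality~\eqref{normeinde}).
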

The subbundles $E_i^u$ and $E_i^s$ can be  considered as functions from
$W_i$ to the Grassmannian $G_N$ of $\R^{N}$. Property $(iv)$ means that they are $d_1$-continuous
and $d_\infty$-Lipschitz.  The Grassmanian $G_N$ is a manifold with as
many connected components as possible dimension for $\R^N$-subspaces, \ie $N+1$. \label{notation for
  mathcal G}
\begin{remark}A main difficulty in this proposition is that $K$ does not depend on $\delta$, whereas the norm of the inverse of $F^\delta$ blows up as $\delta$ approaches 0 whenever $f$ has critical points. Hence the proof of this proposition will not be symmetric in $u$ and $s$.
\end{remark}
We will prove in Corollary \ref{partition1} the existence of a
partition of the unity $(\gamma_i)_i$
 subordinated to $(W_i)_{i=1}^q$, where each $\gamma_i$ is $d_1$-continuous and $d_\infty$-Lipschitz.

Given any $\underline x\in W_i$, let $\pi_{i\underline
  x}^s:\R^N\rightarrow E^s_{i\underline x}$ denote the projection
parallely to $E^u_{i\underline x}$ and $\pi_{i\underline x}^u :\R^N
\rightarrow E^u_{i\underline x}$ the projection parallely to
$E^s_{i\underline x}$.

For $v\in \Gamma$ and $\sigma\in \{s,u\}$, put:
\begin{equation}
  \label{eq:Ji-def}
  v_i^\sigma:= \pi_i^\sigma(\gamma_i\cdot v),
  \quad J_{is}(v):=- \sum_{n=0}^\infty {F^\delta}_{ \star }^n(v_i^s)\quad
  \text{and}\quad J_{iu}(v):= \sum_{n=-\infty}^{-1} {F^\delta}_{ \star
  }^n(v_i^u). 
\end{equation}
We can now define:
\begin{equation}
  \label{eq:J-def}
  J:= \sum_{i, \sigma} J_{i\sigma}.
\end{equation}

Let us define
\begin{displaymath}
  C:=\sup_{1\leq i\leq q}\sup_{\underline{x}\in\mathrm{supp}\gamma_i}
  \left\{\|\pi_{i\underline x}^s\|,\|\pi_{i\underline
      x}^u\|\right\}. 
\end{displaymath}
By Property $(iii)$ the constant $C$ is bounded from above independently of $\delta$.

\begin{lemma}\label{normJC0}
There exists a constant $D$ independent of $\delta$ such that for every $j$, for all $\underline x\in W_j$ and $u\in E_{j\underline x}^s$ (resp. $u\in E_{j\underline x}^u$), for every $n\ge 0$ (resp. $n\le 0$): 
\[\|(F^\delta)^n (u)\|\le D \lambda^{|n|} \|u\|\]
\end{lemma}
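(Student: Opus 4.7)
The plan is to focus on the stable case; the unstable case follows by an analogous argument applied to the backward orbit and to $(F^\delta)^{-1}$, where the uniform bound $\|(F^\delta|_{E^u})^{-1}\|\le K$ from $(v)$ plays the role of the continuity bound $C$ below, and the expansion from $(vii)$ gives the analogue of $\lambda$-contraction. The key difficulty is that $(vii)$ only guarantees $\lambda$-contraction of $F^\delta$ on $E^s$ inside a fixed neighborhood $V_i\subset W_i$ of $\arr\Omega_i$; on $W_i\setminus V_i$ we only have a $\delta$-uniform continuity bound $\|F^\delta\|\le C$, coming from Lemma~\ref{Finversible} and the $C^1$-closeness of $f_\delta$ to $f$.

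First I would choose the $V_i$ to be open neighborhoods of $\arr\Omega_i$ inside the neighborhood from $(vii)$, and forward-trapping in the sense that $\af(V_i)\subset V_i\cup\bigcup_{k<i}W_k$; such a choice is standard for hyperbolic basic pieces of AS endomorphisms and can be made independently of $\delta$. Next I would establish a uniform bounded-excursion claim: there exists $N\in\N$, independent of $\delta$, such that no forward $\af$-orbit can stay outside $\bigcup_i V_i$ for more than $N$ consecutive iterates. This is a compactness argument: a hypothetical failure yields $\underline y_\infty\in A:=\arr M_f\setminus\bigcup_iV_i$ with $\af^m(\underline y_\infty)\in A$ for all $m\ge 0$, so $\omega(\underline y_\infty)\subset A$, contradicting $\omega(\underline y_\infty)\subset\Omega(\af)=\bigsqcup_i\arr\Omega_i\subset\bigcup_iV_i$.

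Then I would track along $\underline x_m:=\af^m(\underline x)$ a non-increasing sequence of indices $j_m$ with $(F^\delta)^m u\in E^s_{j_m,\underline x_m}$: set $j_0:=j$, and let $j_m$ be the smallest $k\le j_{m-1}$ with $\underline x_m\in V_k$ if any such $k$ exists, otherwise $j_m:=j_{m-1}$; properties $(vi)$, $(i)$, and $(ii)$ make this well defined. By the forward-trapping property, once the orbit enters $V_{k^*}$ during a maximal constant-$j_m=k^*$ block, it stays in $V_{k^*}$ until the block ends (any escape would force an index drop, contradicting constancy). Hence all bad iterates $\underline x_m\in A$ inside the block are concentrated at its beginning and number at most $N$ by the bounded-excursion claim. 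Since $j_m$ drops at most $q-1$ times, the total number of bad iterates on $[0,n]$ is at most $M:=qN$.

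Finally, each bad step multiplies $\|(F^\delta)^m u\|$ by at most $C$ and each good step by at most $\lambda$ (by $(vii)$ applied in $V_{j_m}$), yielding
\[
\|(F^\delta)^n u\|\le C^{M}\lambda^{n-M}\|u\|= D\lambda^n\|u\|,\qquad D:=(C/\lambda)^{M},
\]
with $D$ independent of $\delta$. The main obstacle will be the forward-trapping construction of the $V_i$, compatibly with the splittings of Proposition~\ref{principal} and uniformly in $\delta$; this is where the filtration structure of AS endomorphisms really enters the argument.
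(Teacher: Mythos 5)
Your overall strategy matches the paper's: both proofs track a non-increasing index $n_i$ along the orbit $\underline{x}_m=\af^m(\underline{x})$, use a compactness/bounded-excursion argument to show that all but a uniformly bounded number of iterates lie in the small neighborhoods $V_k$ where property $(vii)$ gives $\lambda$-contraction, and absorb the finitely many ``bad'' iterates into a constant $D$ that stays bounded as $\delta\to 0$ because of $(v)$ and the uniform bound on $\|F^\delta\|$.

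There is, however, a real gap in your index tracking. You define $j_m$ as the smallest $k\le j_{m-1}$ with $\underline{x}_m\in V_k$, ``otherwise $j_m:=j_{m-1}$.'' In that fallback case nothing guarantees $\underline{x}_m\in W_{j_{m-1}}$: by $(vi)$ the orbit cannot jump to a higher-indexed $W_k$, but it can leave $W_{j_{m-1}}$ and land in some $W_k\setminus\bigcup_\ell V_\ell$ with $k<j_{m-1}$. Then $E^s_{j_m,\underline{x}_m}$ is simply not defined and the chain $(F^\delta)^m u\in E^s_{j_m,\underline{x}_m}$ breaks, so neither $(i)$, $(ii)$, nor $(vii)$ can be invoked at step $m$. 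The paper avoids this by defining the index directly through the cover $(W_k)_k$: pick any $(n_i)$ with $\underline{x}_i\in W_{n_i}$; property $(vi)$ forces $(n_i)$ to be non-increasing, and then $(i)$--$(ii)$ give $(F^\delta)^i u\in E^s_{n_i,\underline{x}_i}$ at every step. You should define your indices that way, and only afterwards identify the good iterates as those with $\underline{x}_i\in V_{n_i}$.

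A second, softer point: your ``forward-trapping'' hypothesis $\af(V_i)\subset V_i\cup\bigcup_{k<i}W_k$ is asserted as ``standard'' but is not stated in the paper, and it is not obviously a property of the $V_k$ coming from $(vii)$; you would need to shrink them (using the filtration $(M_j)$) and argue that $(vii)$ persists. That said, the paper's own passage here is also terse --- it derives ``at most $qm$ bad iterates'' from the excursion bound without justifying why bad iterates cannot recur throughout a constant-index block --- so some additional argument (your trapping, or a compactness statement for the isolated invariant set $\cap_n \arr f^n(W_k)=\arr\Omega_k$) is genuinely needed. If you repair the index definition and justify the trapping (or replace it by the isolated-invariant-set compactness argument), your proof becomes a cleaned-up version of the paper's.
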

\begin{proof}
For every $\underline x\in W_j$, since $(W_k)_k$ is a cover of $\arr M_f$, there exists a sequence $(n_i)_i$ such that $\arr f^i(\underline x)\in W_{n_i}$ for every $i$. From property $(vi)$, the sequence $(n_i)_i$ must be decreasing. 

As $\underline x\in  W_j$,  we can suppose that $n_0= j$.
By property $(ii)$, for every $k\ge 0$, ${F^\delta}^{-k}$ sends $E_{j\underline x}^u$ into  $E_{n_k}^u$ and ${F^{\delta}}^k $ sends $E_{j\underline x} ^s$ into $E^s_{n_k}$.

Let $(V_k)_k$ be the neighborhoods of respectively $(\arr \Omega_k)_k$ on which $(vii)$ holds. Since the non-wandering set contains the limit set and $\arr M_f$ is compact,  there exists $m\ge 0$ such that there is no $\underline x\in \arr M_f$ such that $(\underline x, \arr f(\underline x), \cdots \arr f^{m-1} (\underline x))$ are all outside of $\cup_k V_k$. We can suppose $V_k$ included in $W_k$ for every $k$. 
Consequently, for every $\underline x$, all the terms $f^i(\underline x)$ of the sequence $(f^i(\underline x))_i$ but $qm$ are in $V_{n_i}$. From $(vii)$.
For all $\underline x\in W_i$ and $u\in E_{i\underline x}^s$ (resp. $u\in E_{i\underline x}^u$), for every $n\ge 0$ (resp. $n\le 0$): 
\[\|(F^\delta)^n (u)\|\le D \lambda^{|n|} \|u\|,\]
with $D= (\max (\|F^\delta\|, \|(F^\delta|E^u_{i\underline x})^{-1}\|  )^{qm}$, which is bounded by a constant independent of $\delta$ by $(v)$. 
\end{proof}
From Lemma \ref{normJC0}, it holds that for every $v \in \Gamma$:
\begin{equation}\label{normeinde} \|Jv\|_{C^0}\le \frac{2 C Dq}{1-\lambda} \|v\|_{C^0},\end{equation}
where $C$, $D$ and $\lambda$ are independent of $\delta>0$ small.

Moreover we easily compute the following.
\begin{proposition}
  The map $J$ is the right inverse of ${F^\delta}_{ \star }-id$:
  \begin{displaymath}
    ({F^\delta}_{ \star }-id)\circ J=id.
  \end{displaymath}
\end{proposition}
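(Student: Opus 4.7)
The plan is to verify the identity by a direct telescoping computation on each summand of $J$, then combine the summands using the partition of unity $(\gamma_i)_i$ and the splitting $E_i^s\oplus E_i^u=\R^N$ from property $(iii)$ of Proposition~\ref{principal}.

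First I would check that the series defining $J_{is}(v)$ and $J_{iu}(v)$ converge in $(\Gamma,\|\cdot\|_{C^0})$ whenever $v\in\Gamma$. This is immediate from Lemma~\ref{normJC0}: the truncated vector $v_i^\sigma=\pi_i^\sigma(\gamma_i\cdot v)$ is supported in $W_i$ and takes values in $E_i^\sigma$, so by the cover/decrease argument of that lemma the iterates $(F^\delta)_\star^n(v_i^s)$ decay geometrically as $n\to+\infty$ and $(F^\delta)_\star^n(v_i^u)$ decay geometrically as $n\to-\infty$, with a uniform rate $\lambda<1$ and uniform constant $D$ independent of $\delta$. This justifies the term-by-term manipulation of the series.

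Next, I would apply $({F^\delta}_\star-\mathrm{id})$ to $J_{is}(v)$ and $J_{iu}(v)$ separately. For the stable part,
\begin{displaymath}
  ({F^\delta}_\star-\mathrm{id})J_{is}(v)=-\sum_{n=0}^{\infty}{F^\delta}_\star^{\,n+1}(v_i^s)+\sum_{n=0}^{\infty}{F^\delta}_\star^{\,n}(v_i^s)=v_i^s,
\end{displaymath}
since the re-indexed series cancel telescopically, with the tail term vanishing by the geometric decay. For the unstable part, the analogous telescoping gives
\begin{displaymath}
  ({F^\delta}_\star-\mathrm{id})J_{iu}(v)=\sum_{n=-\infty}^{-1}{F^\delta}_\star^{\,n+1}(v_i^u)-\sum_{n=-\infty}^{-1}{F^\delta}_\star^{\,n}(v_i^u)={F^\delta}_\star^{\,0}(v_i^u)=v_i^u,
\end{displaymath}
where again the decay as $n\to-\infty$ kills the boundary contribution at $-\infty$.

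Finally, I would sum over $i$ and $\sigma\in\{s,u\}$. For each $\underline x\in\arr M_f$, property $(iii)$ gives the direct-sum decomposition $\R^N=E_{i\underline x}^s\oplus E_{i\underline x}^u$ whenever $\underline x\in W_i$, so $\pi_{i\underline x}^s+\pi_{i\underline x}^u=\mathrm{id}_{\R^N}$ on $\mathrm{supp}\,\gamma_i$. Hence $v_i^s(\underline x)+v_i^u(\underline x)=\gamma_i(\underline x)\,v(\underline x)$, and summing against the partition of unity $\sum_i\gamma_i\equiv 1$ from Corollary~\ref{partition1} yields
\begin{displaymath}
  ({F^\delta}_\star-\mathrm{id})J(v)=\sum_i(v_i^s+v_i^u)=\sum_i\gamma_i\,v=v,
\end{displaymath}
as desired. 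There is no real obstacle here; the only subtlety worth flagging is the convergence of the infinite sums, which is the point where one must invoke Lemma~\ref{normJC0} to keep the operator norm of $J$ bounded independently of $\delta$ (estimate~\eqref{normeinde}); the algebraic identity is then a purely formal telescoping argument.
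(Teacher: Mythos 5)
Your proof is correct and is precisely the ``easy computation'' the paper alludes to without spelling out: the telescoping identity $({F^\delta}_\star-\mathrm{id})J_{is}=\pi_i^s(\gamma_i\cdot)$ and $({F^\delta}_\star-\mathrm{id})J_{iu}=\pi_i^u(\gamma_i\cdot)$, justified by the geometric decay in Lemma~\ref{normJC0} (and the invertibility of $F^\delta$ for $\delta>0$, needed to make sense of the negative powers in $J_{iu}$), followed by recombination via $\pi_i^s+\pi_i^u=\mathrm{id}$ from property $(iii)$ and $\sum_i\gamma_i\equiv1$. Nothing is missing and the route is the same as the paper's intended one.
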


To prove main Theorem \ref{thm:main}, it remains only to prove Propositions~\ref{pro:contract-property} and \ref{principal}. 

To show Proposition \ref{principal}, we will develop some analytical tools in the next section. On the other hand, we are ready to prove
Proposition~\ref{pro:contract-property}.

\begin{proof}[Proof of Proposition~\ref{pro:contract-property}:]
  Let us start by computing $({F^\delta}_\star-{\Phi_f^g})$ at $0\in
  \Gamma$:
  \begin{equation}
    \label{eq:dist-to-origin}
    \begin{split}
      ({F^\delta}_\star-{\Phi_f^g})(0) (\underline
      x)&=F^\delta_{x_{-1}}(0)-\exp_{x_0}^{-1}\circ
      g\circ\exp_{x_{-1}}(p_{x_{-1}}(0)) \\
      &=-\exp_{x_0}^{-1}(g(x_{-1})).
    \end{split}
  \end{equation}

  In particular, this implies $\norm{\big({F^\delta}_\star
    -{\Phi_f^g}\big)(0)}_{C^0}=d_{C^0}(f,g)$.

  On the other hand, $({F^\delta}_\star-{\Phi_f^f})$ is a $C^1$ map defined on a neighborhood of $0\in\Gamma$. Thus we can compute its derivative at the origin:

\begin{equation}
    \label{eq:deriv-F-phiff-0}
    D_0\big({F^\delta}_\star-{\Phi_f^f}\big)
    (v)(\underline x) = \left(F^\delta_{x _{-1}}-
      T_{x_{-1}}f\right) (p_{x_{-1}}\circ v\circ \arr
    f^{-1}(\underline x)) 
  \end{equation}
for every $v\in\Gamma$ and every $\underline x\in\arr M_f$. In particular, the operator norm subordinate to the $C^0$ norm satisfies: 
  \begin{displaymath}
    \norm{D_0\big({F^\delta}_\star-{\Phi_f^f}
      \big)}_{C^0}\rightarrow 0, \; \text{as}\; \delta \rightarrow 0.
  \end{displaymath}
At a neighborhood of $0$, the derivative of $F_\star ^\delta$ is constant, whereas the one of $\phi_f^f$ is continuous. Furthermore, for $g$ $C^1$-close to $f$, $D\phi_f^g$ is close to $D\phi_f^f$.

  Hence, for every $\mu>0$, there exists a small $\eta (\mu)>0$ such that for any $g$ sufficiently close to $f$ in the $C^1$-topology and any $w\in\Gamma$ with
  $\norm{w}_{C^0}\leq\eta(\mu)$ and $\delta\le \eta(\mu)$, it holds
  \begin{equation}
    \label{eq:deriv-estimate}
    \norm{D_w\big({F^\delta}_\star-{\Phi_f^g}
      \big)}_{C^0}\leq \mu.
  \end{equation}

  Then, putting together 
  \eqref{eq:dist-to-origin}, \eqref{eq:deriv-estimate},  we get
  \begin{displaymath}
    \begin{split}
      \norm{({F^\delta}_\star-{\Phi_f^g})Jv}_{C^0} &\leq d_{C^0}(f,g)
      +\mu \norm{Jv}_{C^0} \\
      &\leq d_{C^0}(f,g) +
      \norm{J}_{C^0}\mu
      \norm{v}_{C^0}.\\
      \end{split}
  \end{displaymath}
      
By (\ref{normeinde}), $\norm{J}_{C^0}$ is bounded independently of $\delta$, we put
\begin{equation}\label{conditionsurmu0}[\mu_0= \inf_{\delta \; \text{small}} \min\left(\frac12, \frac1{2\norm{J}_{C^0}}\right)>0.\end{equation}
 Hence for every $\eta,\delta<\eta(\mu_0)$, for every $g$ moreover $\eta/2$-$C^0$-close to $f$ it holds for $v\in \Gamma$:
 \[\|v\|_{C^0} \le \eta \Rightarrow       \norm{({F^\delta}_\star-{\Phi_f^g})Jv}_{C^0} \le \frac \eta2 + \frac12 \|v\|_{C^0}< \eta.\]
Which is the second statement of the Proposition. Also inequalities (\ref{eq:deriv-estimate}) and (\ref{conditionsurmu0}) implies that  $({F^\delta}_\star-{\Phi_f^g})J$ contracts the $C^0$-norm by a small factor when $\eta$, $\delta$ are small and $g$ is close to $f$, which is the first statement of the Proposition. 
%
We remark that as far as $\delta\le \eta(\mu_0)$, which does not depend on $\eta$, we can suppose $\eta\le \eta(\mu_0)$ as small as we want which satisfies the same property, if $g$ is sufficiently close to $f$.

It remains only to estimate $\Lambda(({F^\delta}_\star-\Phi_f^g)(Jv))$ for $v\in \Gamma$. To
  do that, we prove the following lemma similar to the Robin's computation \S6 of \cite{Ri}:
  \begin{lemma}\label{LemmaRobin}
For $\sigma\in \{s,u\}$, there exist a constant $A$ which depends on $f$ but not on $\delta$, and a constant $B_\delta$ which depends on $\delta$ such that for every $v\in \Gamma$,   for any $i$ and $\sigma=s,u$: 
  \begin{equation}
    \label{eq:Lambda-Ji}
    \Lambda(J_{i\sigma}v_i^\sigma)\leq
  A\Lambda(v_i^\sigma) +
   B_\delta \norm{v^\sigma_i}_{C^0}. 
  \end{equation}
  \end{lemma}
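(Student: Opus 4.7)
The plan is to follow Robbin's scheme (\S 6 of \cite{Ri}) while handling the new difficulty that $F^\delta$ is the differential of a mollified $C^1$ map: although $\|F^\delta_x\|$ stays bounded independently of $\delta$, the map $x\mapsto F^\delta_x$ has Lipschitz constant of order $1/\delta$. This asymmetry is exactly what forces $A$ to be $\delta$-independent while $B_\delta$ is allowed to blow up as $\delta\to 0$. I treat $\sigma=s$ and $\sigma=u$ separately but symmetrically.

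For $\sigma=s$, fix a $d_\infty$-Lipschitz, $d_1$-continuous local frame $(e_j)$ of $E^s_i$ over $W_i$, whose existence follows from property~$(iv)$ of Proposition~\ref{principal} (possibly after refining the cover to trivialize the subbundle). Set $V_n:=({F^\delta}_\star)^n v_i^s$; by property~$(i)$, $F^\delta$ preserves $E^s_i$, so $V_n$ is a section of $E^s_i$ and admits an expansion $V_n=\sum_j a_n^j\,e_j$ with scalar functions $a_n^j$. These satisfy a matrix cocycle recursion
\[
a_n^k(\underline x)=\sum_j P_{jk}(\underline x)\, a_{n-1}^j(\arr f^{-1}\underline x),
\]
where $P(\underline x)$ expresses $F^\delta_{x_{-1}}\big|_{E^s_{i,\arr f^{-1}\underline x}}$ in the frame. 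By property~$(vii)$ combined with the compactness argument already used in Lemma~\ref{normJC0}, $\|P(\underline x)\|\le\lambda$ except on a uniformly bounded number of ``transitional'' iterates, which contributes only a multiplicative constant $D$ independent of $\delta$. By the Lipschitz regularity of $x\mapsto F^\delta_x$ and of the frame, $\Lambda(P)\le L_\delta$ for some $L_\delta=O(1/\delta)$.

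Using the isometric property $d_\infty(\arr f^{-1}\underline x,\arr f^{-1}\underline x')=d_\infty(\underline x,\underline x')$, the cocycle yields the key recursion
\[
\Lambda(a_n)\le \lambda\,\Lambda(a_{n-1})+L_\delta\,\|a_{n-1}\|_{C^0},
\]
which iterates to $\Lambda(a_n)\le D\bigl(\lambda^n\Lambda(a_0)+n\lambda^{n-1}L_\delta\,\|a_0\|_{C^0}\bigr)$, together with $\|a_n\|_{C^0}\le D\lambda^n\|a_0\|_{C^0}$. Summing over $n\ge 0$ and converting the scalar Lipschitz back to vector Lipschitz using the ($\delta$-independent) frame Lipschitz, I obtain $\Lambda(J_{is}v_i^s)\le A\,\Lambda(v_i^s)+B_\delta\|v_i^s\|_{C^0}$ with $A$ depending only on $f$ and the bundle geometry and $B_\delta=O(1/\delta)$. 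The case $\sigma=u$ is treated symmetrically, using a frame of $E^u_i$ and the backward cocycle $(F^\delta|_{E^u_i})^{-1}$; its $\lambda$-contraction follows from property~$(vii)$ and its uniform invertibility from property~$(v)$, so $\delta$-dependence again concentrates exclusively in its Lipschitz constant.

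The main obstacle is the careful bookkeeping of $\delta$-dependence: every factor diverging as $\delta\to 0$ must multiply $\|v_i^\sigma\|_{C^0}$ at the previous step, never $\Lambda(v_i^\sigma)$. This is exactly what the invariance of $E^\sigma_i$ under $F^\delta$ (property~$(i)$) secures: by decomposing $V_n$ in a fixed frame of the bundle, the ``vertical'' cocycle action $P$ is $\delta$-independent and uniformly contracting, while the $\delta$-dependence is confined to the ``horizontal'' Lipschitz constant $\Lambda(P)$, which in the recursion acts only on the $C^0$-norm of the previous iterate.
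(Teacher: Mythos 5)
Your plan follows the same telescoping/recursion scheme as the paper: bound the $d_\infty$-Lipschitz variation of $({F^\delta}_\star)^n v_i^s$ by a ``vector variation'' term that contracts geometrically in $\Lambda(v_i^s)$ plus a ``base-point variation'' term of order $n\lambda^n L_\delta \|v_i^s\|_{C^0}$, then sum over $n$; the $\delta$-dependence is indeed confined to the second term. Expanding in a frame to turn the vector cocycle into a scalar one, rather than working with operator norms directly, is a legitimate packaging, and your key recursion
\[
\Lambda(a_n)\le \|P\|_{C^0}\,\Lambda(a_{n-1})+\Lambda(P)\,\|a_{n-1}\|_{C^0}
\]
iterates, via Lemma~\ref{normJC0}, to the correct estimate (the isometry of $d_\infty$ under $\arr f$ is used correctly).

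However, there is a concrete misstatement in the justification: you write that ``by property $(i)$, $F^\delta$ preserves $E^s_i$, so $V_n$ is a section of $E^s_i$''. Property~$(i)$ only gives invariance on $W_i\cap \arr f^{-1}(W_i)$. By property~$(vi)$ the forward orbit leaves $W_i$ (unless it lies in $\arr\Omega_i$), and by property~$(ii)$ the image under $F^\delta$ then lands in $E^s_j$ for some $j<i$ — so $V_n$ is \emph{not} a section of the single bundle $E^s_i$ once $\arr f^n(\mathrm{supp}\,\gamma_i)$ exits $W_i$, and a fixed frame $(e_j)$ of $E^s_i$ cannot express it. The paper's proof is careful about exactly this point: the telescoping sum carries the index $n_k$ of the $W_{n_k}$ through which the orbit passes, and the uniform $D\lambda^n$ bound of Lemma~\ref{normJC0} is proved along the (necessarily nonincreasing, by $(vi)$) sequence $(n_k)_k$. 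To repair your version you would have to piece together frames of each $E^s_j$ met along the orbit and allow the cocycle matrix $P$ to change target bundle at the transitional times; this works but has to be stated, otherwise the scalar cocycle $a_n^k(\underline x)=\sum_j P_{jk}(\underline x)\,a_{n-1}^j(\arr f^{-1}\underline x)$ is simply not defined once the section has left $E^s_i$. The remainder of the argument, including the $\delta$-bookkeeping and the symmetric treatment of $\sigma=u$ via $(F^\delta|_{E^u})^{-1}$, is sound once this is fixed.
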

As the norm of $\Lambda(v_i^\sigma)$ is dominated by $\Lambda(v)$ times a constant independent of $\delta$, it holds by taking the constants $A$ and $B_\delta$ larger:
  \begin{equation}
    \label{eq:Lambda-J}
    \Lambda(Jv)\leq
    A\Lambda(v) +
   B_\delta \norm{v}_{C^0}.   
  \end{equation}
Put $L_{\underline x} := F^\delta_{x_{-1}}- \exp^{-1}_{x_0}\circ g\circ \exp_{x_{-1}}\circ p_{x_{-1}}$.  
We have:
\[ ({F^\delta}_\star-\Phi_f^g)J(v)(\underline x) -({F^\delta}_\star-\Phi_f^g)J(v)(\underline y)
= L_{\underline x} (J(v)(\underline x) -J(v)(\underline y))+
 (L_{\underline x} -L_{\underline y})J(v)(\underline y)\] 
Hence:
 \[\Lambda(({F^\delta}_\star-\Phi_f^g)J(v)) \le  \|L\|_{C^0} (A\Lambda(v) + B_\delta \norm{v}_{C^0})+ \Lambda(L)\|J(v)\|_{C^0},
   \]
 where  $\Lambda(L)$ depends on $\delta$.   
 
By (\ref{eq:deriv-estimate}), we can suppose $g$ sufficiently close to $f$ and $\delta$ small enough so that $\|L\|_{C^0} $ is  $1/(2 A)$ contracting on a small neighborhood of $0$.  From this: 
 \[\Lambda(  ({F^\delta}_\star-\Phi_f^g)J(v)) \le  \frac{\Lambda(v)}2 +
  \big(\frac{ B_\delta}{2A}  + \Lambda(L) \|J\|_{C^0}\big) \norm{v}_{C^0},\]
Hence for every $\epsilon>0$, for every $\eta$ such that:
\[\eta\le (\frac{ B_\delta}{2A}  + \Lambda(L) \|J\|_{C^0})^{-1}\frac\epsilon2\]
If $\norm{v}_{C^0}\le \eta$ and $\Lambda(v)\le \epsilon$ and $g$ sufficiently close to $f$, it holds:
  \[\Lambda(  ({F^\delta}_\star-\Phi_f^g)J(v)) \le \epsilon\]
 
\end{proof}

\begin{proof}[Proof of Lemma \ref{LemmaRobin}]
We prove the case $\sigma=s$, since the other case $\sigma=u$ is similar. 
For $n\ge 0$,  we evaluate:
  \[\|{F^\delta}^n (\underline x, v_i^s(\underline x))-{F^\delta}^n ({\underline y}, v_i^s(\underline y))\|\]
  \[\le \|{F^\delta}^n \circ \pi_i^s(\underline x, v_i^s(\underline x)- v_i^s(\underline y))\|+
  \|{F^\delta}^n \circ \pi_i^s (\underline x,v_i^s(\underline y) )-{F^\delta}^n \circ \pi_i^s(\underline y,v_i^s(\underline y) )\|\]
  By remark \ref{normJC0}, there exists a constant $D$ which does not depend on $n$ nor $\delta$ such that:
  \[\|{F^\delta}^n |E^s_i\|\le D \lambda^{n}\]
  Hence:
  \[\|{F^\delta}^n \circ \pi_i^s(\underline x, v_i^s(\underline x)- v_i^s(\underline y))\|\le DC\lambda^{n} C\Lambda (v_i^s) d_\infty(\underline x, \underline y)\]

On the other hand,
     \[\|{F^\delta}^n \circ \pi_i^s (\underline x,\cdot )-{F^\delta}^n \circ \pi_i^s(\underline y,\cdot )\|
  \le \]
  \[\sum_k \| {F^\delta} ^{n-k-1} |E^s_{n_k \arr f ^{k+1}(\underline x)}\|\cdot \| F^\delta (\arr f ^k(\underline x),\cdot ) -
  F^\delta (\arr f ^k(\underline y),\cdot )\|\cdot \|Tf ^k |E^s_{\underline y}\|, \]
where $n_k$ is such that $\arr f^{k+1}(\underline x)\in W_{n_k}$. This is less than:
\[  \sum_k CD\lambda^{n-k-1} \cdot \| F^\delta (\arr f ^k(\underline x),\cdot ) -
  F^\delta (\arr f ^k(\underline y),\cdot )\|\cdot CD \lambda^k.\]
 Hence there exists a constant $K(\delta)$ which depends only on $f$ and $\delta$ such that: 
     \[\|{F^\delta}^n \circ \pi_i^s (\underline x,\cdot )-{F^\delta}^n \circ \pi_i^s(\underline y,\cdot )\|
  \le  n\cdot K(\delta) \lambda^{n} d_\infty (\underline x, \underline y)\]
  Consequently :
   \[\|{F^\delta}^n (\underline x, v_i^s(\underline x))-{F^\delta}^n ({\underline y}, v_i^s(\underline y))\|\le 
 (DC^2\lambda^{n} \Lambda (v_i^s)+ n\cdot K(\delta) \lambda^{n}\|v\|_{C^0}) d_\infty (\underline x, \underline y)\]
  Summing over $n$ we conclude.
  \end{proof}

\section{Analysis on $M_f$}\label{analysis}
Let us introduce a few notations. Let $N$ be an arbitrary
Riemannian manifold. We recall that $C^0(\arr M_f, N)$ denotes the
space of $d_1$-continuous maps $\phi:\arr M_f\rightarrow N$.  Also
$\Lip^\infty (\arr M_f, N)$ denotes the space of $d_\infty$-Lipschitz
maps $\phi:\arr M_f\rightarrow N$. Let us define:


\begin{displaymath}
  \Mor^\infty_0(\arr M_f,N):=C^0 (\arr M_f, N)\cap\Lip^\infty (\arr
  M_f, N). 
\end{displaymath}
We endow $C^0(\arr M_f, N)$ with the uniform distance given by the
Riemannian metric of $N$. Note that $C^0(\arr M_f, N)$ is a Banachic
manifold.  Actually its topology does not depend on the Riemannian
metric of $N$. The aim of this section is to prove the denseness of
$\Mor^\infty_0(\arr M_f,N)$ in $C^0 (\arr M_f, N)$.  To do this, we
will use a new technique based on convolutions. 

Let $\rho\in C^\infty (\R)$ be a non-negative bump function with
support in $(-1,1)$. Let $\mu$ be any Lebesgue measure on $M$ such
that $\mu(M)=1$, and let $\tilde \mu=\bigotimes_{\mathbb Z}\mu$ be the
induced probability on $M^\mathbb{Z}$.

For every map $g$ from $\arr M_f$ into $\R^n$, for every $r>0$, we define $g_r$ by: 
  \begin{displaymath}
    g_r:\; \arr M_f\ni\underline x \mapsto
    {\int\limits_{M^\mathbb Z} g(\underline y)\cdot \rho
      \left(\frac{d_1 (\underline x,\underline y)}r\right)d\tilde
      \mu(\underline y)}. 
  \end{displaymath}

The following result plays a key role:
\begin{lemma}
  \label{covol}
  Let $\phi:\; \arr M_f\rightarrow \R^n$ be a continuous function with
  respect to the distance $d_1$. Let $\tilde \phi$ be a continuous
  extension to $(M^\mathbb Z, d_1)$.  Let $1\!\!1$ be the function on
$ M^{\mathbb Z}$ constantly equal to $1\in \mathbb R$. For every $r>0$, the functions $1\!\!1_r$ and $\tilde \phi_r$ (defined as above) satisfy:

  \begin{itemize}
  \item[\it (i)] $\tilde \phi_r$ and $\tilde \phi_r/1\!\!1_r$ are well
    defined.
  \item[\it (ii)] $\tilde \phi_r$ is $d_1$-continuous and $d_\infty$-Lipschitz, \ie it belongs to $\Mor^\infty_0(\arr M_f, \mathbb
    R^n)$.
  \item[\it (iii)] The function $\tilde \phi_r/1\!\!1_r$ is
    $C^0$-close to $\tilde \phi$ whenever $r$ is small.
  \item[\it (iv)] The support of $\tilde \phi_r$ is included in the
    $r$-neighborhood of the support of $\phi$.
  \end{itemize}
\end{lemma}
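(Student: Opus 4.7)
The plan is to treat the four items roughly in the order (i), (iv), (ii), (iii), since positivity of $1\!\!1_r$ is the technical heart and the other statements follow rather directly.

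For (i), finiteness of $\tilde\phi_r(\underline x)$ is immediate: $\tilde\phi$ is continuous on the compact space $(M^\Z,d_1)$, hence bounded, and $\rho$ is bounded and compactly supported, so the integrand is a bounded measurable function. The nontrivial part is showing $1\!\!1_r(\underline x)>0$ for every $\underline x\in \arr M_f$. I would exhibit an explicit open set of positive $\tilde\mu$-measure on which the integrand is bounded below. Since $\rho(0)>0$ and $\rho$ is continuous, there is $\alpha>0$ with $\rho(t)\geq \alpha$ for $|t|\leq 1/2$. Letting $D=\mathrm{diam}(M)$, pick $N$ so that $\sum_{|n|>N}D/2^{|n|}<r/4$, and for $|n|\leq N$ take a small geodesic ball $B_n\ni x_n$ of diameter less than $r/(4(2N+1))$. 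Then $\prod_{|n|\leq N}B_n\times \prod_{|n|>N}M$ is contained in $\{d_1(\underline x,\underline\cdot)<r/2\}$, has positive $\tilde\mu$-measure by the Fubini definition of the product measure, and gives $1\!\!1_r(\underline x)\geq \alpha\cdot \tilde\mu(\cdot)>0$.

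For (iv), one chooses the Tietze extension $\tilde\phi$ so that its support lies arbitrarily close to the support of $\phi$ (e.g.\ multiply a raw extension by a Urysohn function vanishing outside an $\varepsilon$-neighbourhood of $\mathrm{supp}\,\phi$). If $\tilde\phi_r(\underline x)\neq 0$, the integrand is nonzero at some $\underline y$ with $d_1(\underline x,\underline y)<r$ and $\tilde\phi(\underline y)\neq 0$; letting $\varepsilon\to 0$ in the extension gives the $r$-neighbourhood statement. For (ii), the key observation is that $\underline x\mapsto d_1(\underline x,\underline y)$ is $1$-Lipschitz with respect to $d_1$, so $\underline x\mapsto \rho(d_1(\underline x,\underline y)/r)$ is $\|\rho'\|_\infty/r$-Lipschitz. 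Since the integrand is supported in $B^{d_1}_r(\underline x)\cup B^{d_1}_r(\underline x')$, which has uniformly bounded $\tilde\mu$-measure $V_r$, one gets
\begin{displaymath}
\bigl|\tilde\phi_r(\underline x)-\tilde\phi_r(\underline x')\bigr|\leq \|\tilde\phi\|_\infty\cdot\frac{\|\rho'\|_\infty}{r}\cdot V_r\cdot d_1(\underline x,\underline x'),
\end{displaymath}
so $\tilde\phi_r$ is $d_1$-Lipschitz (hence $d_1$-continuous), and by inequality \eqref{rem:semiequivalencedemetric} it is also $d_\infty$-Lipschitz.

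For (iii), writing
\begin{displaymath}
\frac{\tilde\phi_r(\underline x)}{1\!\!1_r(\underline x)}-\tilde\phi(\underline x)=\frac{1}{1\!\!1_r(\underline x)}\int_{M^\Z}\bigl(\tilde\phi(\underline y)-\tilde\phi(\underline x)\bigr)\rho\bigl(d_1(\underline x,\underline y)/r\bigr)\,d\tilde\mu(\underline y),
\end{displaymath}
one uses that $\tilde\phi$ is uniformly continuous on the compact metric space $(M^\Z,d_1)$ so that $|\tilde\phi(\underline y)-\tilde\phi(\underline x)|$ is as small as desired on $\{d_1(\underline x,\underline y)<r\}$ once $r$ is small; combined with $\rho\geq 0$ this yields the bound $\varepsilon$ uniformly in $\underline x$.

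The main obstacle is (i): one must unpack the definition of $d_1$ and use the product structure of $\tilde\mu$ to prove the positive-measure lower bound on $d_1$-balls, since the infinite product ambient space is not locally compact and naive Lebesgue-style arguments fail. Every other item, once (i) provides a uniform denominator $1\!\!1_r>0$ (compactness of $\arr M_f$ plus continuity makes this bound uniform), reduces to standard convolution computations.
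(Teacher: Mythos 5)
Your argument mirrors the paper's proof point for point: positivity of $1\!\!1_r$ is obtained via a cylinder-set lower bound on the $\tilde\mu$-measure of a $d_1$-ball (you cut the tail at some $N$ and take small balls in finitely many coordinates), $d_1$-Lipschitzness of $\tilde\phi_r$ comes from the Lipschitz bound on $\rho$ together with the $1$-Lipschitzness of $\underline x\mapsto d_1(\underline x,\underline y)$ (and then $d_\infty$-Lipschitzness follows from $d_1\le 3d_\infty$, exactly as the paper does), and (iii) uses uniform continuity of $\tilde\phi$ on the compact product — the only cosmetic difference being that you bound the integral in (ii) by $\|\tilde\phi\|_\infty V_r$ while the paper uses $\int|\tilde\phi|\,d\tilde\mu$, both $\le\|\tilde\phi\|_\infty$. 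One small bonus: you treat (iv) explicitly, where the paper's proof omits it, and your remark that the extension $\tilde\phi$ must be chosen with support close to $\mathrm{supp}\,\phi$ (since the convolution argument a priori only controls $\mathrm{supp}\,\tilde\phi_r$ relative to $\mathrm{supp}\,\tilde\phi$, not $\mathrm{supp}\,\phi$) is a genuine care the paper glosses over.
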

The following are immediate corollaries of this lemma:
\begin{corollary}
  \label{partition1}
  For every open cover $(U_i)_i$ of $\arr M_f$, there exists a
  partition of unity $(\rho_i)_i\subset\Mor^\infty_0( \arr M_f,\mathbb
  R)$ subordinate to it.
\end{corollary}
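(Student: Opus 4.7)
The plan is to reduce to a finite subcover of the compact space $\arr M_f$, build a classical continuous partition of unity on it using only the metric $d_1$, and then smooth each piece by the convolution operator of Lemma~\ref{covol} before renormalizing.

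First, I note that $(M^{\mathbb Z},d_1)$ is compact since $M$ is, and so is its closed subset $\arr M_f$. I extract from $(U_i)_i$ a finite subcover $U_{i_1},\dots,U_{i_k}$ (the indices not retained get $\rho_i\equiv 0$). By normality I shrink it to open sets $V_j$ with $\overline{V_j}\subset U_{i_j}$ still covering $\arr M_f$, and form the Urysohn functions
\[
\phi_j(\underline x):=\frac{d_1(\underline x,\arr M_f\setminus U_{i_j})}{d_1(\underline x,\arr M_f\setminus U_{i_j})+d_1(\underline x,\overline{V_j})},
\]
which are $d_1$-continuous, take values in $[0,1]$, are identically $1$ on $\overline{V_j}$, and are supported in $U_{i_j}$; in particular $\sum_j\phi_j\ge 1$ on $\arr M_f$. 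I extend each $\phi_j$ to a continuous function on $M^{\mathbb Z}$ by Tietze.

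Next I apply Lemma~\ref{covol} to each such extension with a common parameter $r>0$, chosen small enough to achieve two goals simultaneously: by (iv) the support of $\tilde\phi_{j,r}$ should stay inside $U_{i_j}$ for every $j$ (possible by compactness of $\mathrm{supp}\,\phi_j$ and openness of $U_{i_j}$), and by (iii) the ratio $S/1\!\!1_r$, with $S:=\sum_j\tilde\phi_{j,r}$, should stay bounded below by $1/2$ on $\arr M_f$ (since $S/1\!\!1_r$ is $C^0$-close to $\sum_j\phi_j\ge 1$ by linearity of the convolution). Because $1\!\!1_r$ is $d_1$-continuous, $d_\infty$-Lipschitz and strictly positive on the compact set $\arr M_f$ by (i), it is uniformly bounded below there, and hence so is $S$. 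The functions
\[
\rho_{i_j}:=\tilde\phi_{j,r}/S
\]
then form a partition of unity on $\arr M_f$ subordinate to $(U_i)_i$, and each one lies in $\Mor^\infty_0(\arr M_f,\R)$.

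The only mildly delicate point --- not really an obstacle --- is to check that $\Mor^\infty_0(\arr M_f,\R)$ is stable under the two operations used: finite sums, and division by a $d_1$-continuous, $d_\infty$-Lipschitz function bounded away from $0$. Both are routine Lipschitz estimates, after which the whole argument amounts to the standard Urysohn partition-of-unity construction adapted to $(\arr M_f,d_1)$, with the convolution of Lemma~\ref{covol} providing the sole non-trivial ingredient to land in the prescribed regularity class.
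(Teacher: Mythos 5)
The paper treats this corollary as an ``immediate'' consequence of Lemma~\ref{covol} and gives no written proof, so your proposal fills in exactly the step the paper leaves implicit, and the overall strategy (finite subcover, Urysohn functions, regularize by the convolution of Lemma~\ref{covol}, renormalize by the sum) is the intended one. Your closing observations about $\Mor^\infty_0(\arr M_f,\R)$ being closed under finite sums and under division by an element bounded away from $0$ are correct and are the only algebraic facts actually used.

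There is, however, one small point that you pass over too quickly: when you ``extend each $\phi_j$ to a continuous function on $M^{\Z}$ by Tietze'' and then invoke Lemma~\ref{covol}$(iv)$, the support of $\tilde\phi_{j,r}$ is controlled by the $r$-neighborhood of $\mathrm{supp}\,\tilde\phi_j$ in $M^{\Z}$, not of $\mathrm{supp}\,\phi_j$ in $\arr M_f$. An arbitrary Tietze extension carries no support control off $\arr M_f$, so the $r$-neighborhood argument does not directly give $\mathrm{supp}(\tilde\phi_{j,r})\cap\arr M_f\subset U_{i_j}$. The fix is routine but should be stated: either multiply the Tietze extension by a continuous cutoff $\chi_j\colon M^{\Z}\to[0,1]$ which equals $1$ on the compact $\mathrm{supp}\,\phi_j$ and vanishes off a compact neighborhood $N_j\subset\tilde U_j$ (where $\tilde U_j$ is open in $M^{\Z}$ with $\tilde U_j\cap\arr M_f=U_{i_j}$) --- this keeps the restriction to $\arr M_f$ equal to $\phi_j$ while giving $\mathrm{supp}\,\tilde\phi_j\subset N_j$ --- or, more simply, run the whole Urysohn shrinking directly with open subsets of $M^{\Z}$ and define the $\phi_j$ globally by the distance-ratio formula, so that no Tietze extension is needed. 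With that adjustment the argument is complete. (Note that the paper's own statement of Lemma~\ref{covol}$(iv)$ speaks of ``the support of $\phi$'' rather than of $\tilde\phi$, which is the same imprecision; your fix also makes that statement literally correct for the extensions you use.)
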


\begin{corollary}
  \label{approximation general}
  The subset $\Mor^\infty_0( \arr M_f,N)$ is dense in $C^0(\arr M_f,
  N)$.
\end{corollary}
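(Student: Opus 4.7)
The plan is to reduce Corollary \ref{approximation general} to the real-valued case already handled by Lemma \ref{covol}, using a smooth embedding of the target manifold into Euclidean space. First I would fix, once and for all, a smooth embedding $\iota: N \hookrightarrow \R^n$ (e.g.\ via Whitney's embedding theorem), together with a tubular neighborhood $V \subset \R^n$ of $\iota(N)$ and an associated smooth retraction $\pi: V \to \iota(N)$. This reduces the whole problem to approximating $\R^n$-valued continuous functions, where the convolution machinery of Lemma \ref{covol} is available.

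Given $\phi \in C^0(\arr M_f, N)$ and $\varepsilon>0$, the composition $\iota \circ \phi$ is continuous with compact image in $\R^n$, because $\arr M_f$ is compact. Since $\arr M_f$ is a closed subset of the metrizable space $(M^\Z, d_1)$, Tietze's extension theorem (applied componentwise) produces a bounded continuous extension $\widetilde{\phi}: M^\Z \to \R^n$ of $\iota \circ \phi$. Applying Lemma \ref{covol} componentwise to $\widetilde{\phi}$, I obtain for every $r>0$ a map $\widetilde{\phi}_r/1\!\!1_r \in \Mor^\infty_0(\arr M_f, \R^n)$, which is uniformly close to $\iota \circ \phi$ whenever $r$ is taken small enough.

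For $r$ sufficiently small, the image of $\widetilde{\phi}_r/1\!\!1_r$ lies inside the tubular neighborhood $V$, and I would set
\[ \phi' \;:=\; \iota^{-1} \circ \pi \circ (\widetilde{\phi}_r/1\!\!1_r) \;:\; \arr M_f \longrightarrow N. \]
Since $\iota^{-1} \circ \pi$ is smooth and hence Lipschitz on any compact subset of $V$, the map $\phi'$ inherits both $d_1$-continuity and the $d_\infty$-Lipschitz property from $\widetilde{\phi}_r/1\!\!1_r$, so $\phi' \in \Mor^\infty_0(\arr M_f, N)$. The fact that $\pi \circ \iota = \mathrm{id}_{\iota(N)}$, combined with uniform continuity of $\iota^{-1}\circ\pi$ on a compact neighborhood of $\iota(\phi(\arr M_f))$, ensures that $\phi'$ is $C^0$-close to $\phi$ as required.

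The only delicate point is the final retraction step: one must choose $r$ small enough that the approximation from Lemma \ref{covol} remains inside $V$, so that composition with $\iota^{-1}\circ\pi$ makes sense, and one must know that this composition does not destroy the Lipschitz estimate. Both issues are resolved by compactness of $\arr M_f$ and smoothness of $\pi$ on $V$, so no essentially new difficulty enters beyond Lemma \ref{covol} itself.
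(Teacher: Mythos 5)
Your proof is correct and supplies exactly the argument the paper elides when calling this an "immediate corollary" of Lemma~\ref{covol}: embed $N$ in Euclidean space, extend by Tietze, regularize by the convolution operator of the lemma, and project back through a tubular neighborhood retraction, using compactness of $\arr M_f$ (hence of $\phi(\arr M_f)$) to guarantee the approximant lands in the tube and that the retraction is Lipschitz on the relevant compact set. The one small point you glide over --- that $\widetilde{\phi}_r/1\!\!1_r$ (and not just $\widetilde{\phi}_r$) is $d_\infty$-Lipschitz --- follows since $1\!\!1_r$ is itself of the form covered by Lemma~\ref{covol}(ii) and is bounded away from zero by part~(i), so the quotient stays Lipschitz; with that noted, the argument is complete.
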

\begin{remark}Both above corollaries are also true if we replace  $\arr M_f$ by  any compact subset $E$ of it.
\end{remark}

\begin{proof}[Proof of Lemma \ref{covol}]
  Let us start by proving {\it (i)}. As $\tilde \phi$ and $\rho$ are
  continuous on a compact space, they are bounded. As $\tilde
  \mu(M^\mathbb Z)=1$, the functions $\tilde \phi_r$ and $1\!\!1_r$
  are well defined. Let $\underline x\in M^\mathbb Z$. There exists
  $\delta>0$ such that $\rho|B_{d_1}(0,\delta/r)$ is greater than
  $\delta$. For every $\underline x\in M^\mathbb Z$, the $\tilde
  \mu$-volume of the ball $B_{d_1}(\underline x,\delta/r)$ is greater
  than:
  \begin{displaymath}
    \prod_{-N}^N  \mu \bigg(B\Big(x_i, \frac{\delta}{6r}\Big)\bigg)>0,
  \end{displaymath}
  where $N$ is any natural number satisfying $\sum_{|n|\ge N}
  2^{-|n|}diam(M)\le \frac{\delta}{2r}$.

  Thus, $m:= \inf\{\tilde \mu(B_{d_1}(\underline x, \delta/r)):\;
  \underline x\in \arr M\}$ is positive and $1\!\!1_r> m \delta$.
  Consequently, $\phi_r/1\!\!1_r$ is everywhere well defined.

  Let us proof {\it (iii)}. As $(M^{\mathbb Z},d_1)$ is compact, the
  function $\tilde \phi$ is uniformly continuous: for every $\delta>0$,
  there exists $r>0$ such that the image by $\tilde \phi$ of any
  $d_1$-ball of radius $r$ has diameter less than $\delta$.  Thus for every $\underline x\in \arr M_f$:
  \begin{displaymath}
    |\tilde \phi_r(\underline x)-\tilde \phi(\underline x)\cdot
    1\!\!1_r(\underline x)|\le {\int\limits_{M^{\mathbb Z}} \delta
      \cdot \rho \left(\frac{d_1 (\underline x,\underline
          y)}r\right)\tilde \mu(\underline y)}\le\delta \cdot
    1\!\!1_r(\underline x) 
  \end{displaymath}

  Let us proof {\it (ii)}. We remark that if a function is
  $d_1$-Lipschitz, then it is $d_\infty$-Lipschitz, and so it
  belongs to $\Mor^\infty_0(\arr M_f, \mathbb R^m)$.  Then, let us
  prove $\tilde \phi_r$ is $d_1$-Lipschitz. For every $\underline
  x'\in \arr M_f$:
  \begin{displaymath}
    \tilde \phi_r(\underline x)- \tilde \phi_r(\underline x')=
    {\int\limits_{M^\mathbb Z} \tilde \phi(\underline y)\cdot
      \left(\rho \left(\frac{d_1 (\underline x,\underline
            y)}r\right)-\rho \left(\frac{d_1 (\underline x',\underline
            y)}r\right)\right)d\tilde \mu(\underline y)} 
  \end{displaymath}
  As $\rho$ is smooth, its derivative is bounded by some $L$, and so:
  \begin{displaymath}
    \left|\rho \left(\frac{d_1 (\underline x,\underline
          y)}r\right)-\rho \left(\frac{d_1 (\underline x',\underline
          y)}r\right)\right|\le \frac Lr |d_1 (\underline x,\underline
    y)-d_1 (\underline x',\underline y)|\le \frac Lr d_1 (\underline
    x,\underline x') 
  \end{displaymath}
  Consequently:
  \begin{displaymath}
    |\tilde \phi_r(\underline x)- \tilde \phi_r(\underline x')|\le
    \frac Lr d_1 (\underline x,\underline x')\int\limits_{M^\mathbb Z}
    |\tilde \phi(\underline y)| d\tilde \mu(\underline y) 
  \end{displaymath}
  Thus, since $\tilde \phi$ is bounded and $\tilde \mu$ is a
  probability, we get that $\tilde \phi_r$ is $d_1$-Lipschitz as desired.
\end{proof}

\section{Proof of Proposition \ref{principal}}\label{propprincipal}
Let $f$ be an $AS$ endomorphism of a compact manifold $M$.
\subsection{Preliminaries}\label{preli}
\paragraph{Distance on Grassmannian bundles.}
We endow the space of linear endomorphisms of $\R^N$ with the operator
norm $\|\cdot\|$ induced by the Euclidean one of $\R^N$. We recall
that the Grassmannian $G_N$ of $\R^N$ is the space of $d$-planes of
$\R^N$, for $0\le d\le N$. Given two planes $P,P'\in G_N$ let $\pi_P$ and
$\pi_P'$ be their associated orthogonal projections. The metric $d_G$ on $G_N$ is
defined by:
\begin{displaymath}
  d_G(P,P')=\norm{\pi_P-\pi_{P'}}
\end{displaymath}


\paragraph{Angle between planes.}
Two planes $P$ and $P'$ of $\R^n$ make an angle greater than $\eta$ if for all $u\in P\setminus \{0\}$ and $v\in P'\setminus \{0\}$, the angle between $u$ and $v$ is greater than $\eta$ (for the Euclidean norm), in particular they are in direct sum.
\paragraph{Definition of $E^s$.} 
 We recall that for any $(\underline x, a)\in\arr M_f\times \R^N$,
${F^\delta}(\underline x, a)=(\arr f(\underline x),
F^\delta_{x_0}(a))$, where $x_0=\pi_0(\underline x)$.

The stable direction $E^s_{\underline x}$ of $F^0 $ at $\underline x$ is given by
\begin{equation}
  \label{eq:Es-def-for-F0}
  E^s_{\underline x}:= \text{Ker }p_{x_0}\oplus T_{x_0} W^s(x_0, f),
\end{equation}
where $W^s(x_0,f)$ is the stable set of $x_0$; its intersection with a neighborhood of $\pi_0(\arr M_f)$ is an immersed manifold (see Prop. 4.11 \cite{BR12}).

We remark that $E^s_{\underline x}$ depends only on $x_0$, for every $\underline x\in \arr M_f$. 

In order to construct the plane fields of
Proposition~\ref{principal}, we will have to take care of the critical
points of $f$. The unique control that we have on them is the strong
transversality condition. This condition implies, in particular, that
for every $\underline x\in \arr M_f$ it holds
\begin{equation}\label{tune}
  E^s_{\arr f(\underline x)}+ Tf(T_{ x_0} M)= \R^N.
\end{equation}

Therefore we shall construct the distributions $(E_i^s)_{i=1}^q$
``close'' to $E^s$ in $G_N$. Let us explain how we will proceed, and what does
it mean.

\paragraph{Topology on plane fields of nested domains of definition}
For a subset $C\subset \arr M_f$, we denote by $C^0(C, G_N)$ the space of $d_1$-continuous maps from $C$ into $G_N$. When $C$ is compact,  we endow this space with the uniform metric:
\begin{displaymath}
  d(g,g')= \max_{\underline x\in C} d(g(x), g'(x)).
\end{displaymath}
Given a plane field $E\in C^0(C, G_N)$ and $\eta>0$, we denote by $B(E,\eta)$ (resp. $\bar B(E, \eta)$) the open (resp. closed) ball centered at $E$ and radius $\eta$.

Let $W$ be subset of $\arr M_f$ and $V$ a neighborhood of $W$. Let $E_W\in C^0(W,G_N)$ and $E_V\in C^0(V,G_N)$ be two plane fields.
We say that $E_W$ is \emph{compact-open close} to $E_V$ if for any compact subset $C\subset W$, there exists a small compact neighborhood $N$ of $C$ in $V$ such that the graph of $E_V| N$ is close to the graph of $E_W|C$ for the Hausdorff distance on compact subsets of $M_f\times G_N$ induced by $d_1+d_G$.
This will be explained in greater details for its application case in remark \ref{defdistancesurensemblesdifferents}.

\subsection{Splitting Proposition \ref{principal} into the stable and unstable fields}  
We are going to illustrate the geometrical part of the proof of Proposition \ref{principal} by depicting the construction for the following example.  Let $f\colon (x,y,z)\in \R^3\mapsto (x^2,y^2,0)$. This map is AS and can be extended to an AS endomorphism of the compactification $(\R\cup \{\infty\})^3$ of $\R^3$ equal to the 3-torus. On this compact manifold, its inverse limit is homeomorphic to $[0,\infty]^3$, via the projection $\pi_0$. Since this map is invariant via the symmetries $(x,y,z) \mapsto (x^{\delta_x},y^{\delta_y},z^{\delta_z})$, $(\delta_x,\delta_y,\delta_z)\in \{-1,1\}^3$, we will focus only on the restricted dynamics on $\pi_0^{-1}([0,1]^3)$ which is the inverse limit of $f$ restricted to the set of points with bounded orbit. The restricted non-wandering set $\arr \Omega_f$ is formed by $4$ fixed points $
(0,0,0)^\Z$, $(0,1,0)^\Z$, $(1,0,0)^\Z$, $(1,1,0)^\Z$.

Let us split Proposition \ref{principal} into two propositions. 

\begin{proposition}\label{fondaprops}
There exist neighborhoods  $(V_i)_{i=1}^q$  of respectively $(W^s(\arr \Omega_i))_{i=1}^q$ in $\arr M_f$, and for every small $\delta$, there are functions 
\[E_{i}^s\colon V_i\to G_N\]
satisfying the following properties for every $i$:
\begin{enumerate}[$(i)$]
\item for every ${\underline x}\in V_i\cap \arr f^{-1}(V_i)$ the following inclusion holds:
\[F^\delta(E_{i{\underline x}}^s)\subset E_{i\arr f({\underline x})}^s.\]
\item for every $k\ge j$, for every ${\underline x}\in V_k\cap \arr f^{-1}( V_j)$ the following inclusion holds:
\[F^\delta (E_{k {\underline x}}^s)\subset E_{j\arr f({\underline x})}^s.\]
\item $E_{i}^s$ is compact-open close to $E^s|W^s(\arr \Omega_i)$, when $\delta$ is small. 
%
\item   $E_{i}^s$ is of constant dimension, $d_1$-continuous, and locally Lipschitz for the  metric $d_{\infty}$.
\end{enumerate}
\end{proposition}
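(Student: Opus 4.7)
The construction goes by induction on $i$ from $i=1$ up to $i=q$, so that when we build $E_i^s$ all the plane fields $E_j^s$ with $j<i$ required by the compatibility $(ii)$ are already in place. For each $i$, three layers are glued together: $E_i^s$ is first defined on $\arr\Omega_i$ itself from hyperbolicity, then extended to a small neighborhood of $\arr\Omega_i$ by an invariant-cone-field argument, and finally propagated to all of $V_i$ by pulling back along $F^\delta$, whose invertibility is exactly what Lemma~\ref{Finversible} is designed for.

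On $\arr\Omega_i$, set $E_i^s(\underline x):=\ker p_{x_0}\oplus T_{x_0}W^s(x_0,f)$, which matches the $E^s$ of \eqref{eq:Es-def-for-F0}. This distribution is $F^0$-invariant and uniformly contracted; since $F^\delta-F^0=O(\delta)$, for small $\delta$ there is a stable cone $C_i^s$ around $E^s$ that is strictly $F^\delta$-invariant with a contraction rate inherited from the hyperbolicity of $f$, hence $\delta$-independent. Intersecting backward pull-backs of $C_i^s$ yields a unique $F^\delta$-invariant plane field $E_i^s$ on a small neighborhood $V_i^0$ of $\arr\Omega_i$. To extend to all of $V_i$, pick for each $\underline x\in V_i$ the first time $n(\underline x)$ at which $\arr f^{n(\underline x)}(\underline x)$ enters either $V_i^0$ or some already-constructed $V_j$ with $j<i$, and set
\[
E_i^s(\underline x):=(F^\delta)^{-n(\underline x)}\bigl(E_?^s(\arr f^{n(\underline x)}(\underline x))\bigr),
\]
where $?\in\{i,j\}$ is the corresponding index. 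By design this enforces $(i)$ and $(ii)$, and the value does not depend on the choice of $n(\underline x)$ because any two candidates lie in the same stable cone and share the same dimension.

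Property $(iii)$, compact-open closeness to $E^s|W^s(\arr\Omega_i)$, follows from the cone construction: on $V_i^0$ the cone is a small neighborhood of $E^s$, and the pull-backs stay inside the forward-invariant stable cone whose opening is $\delta$-independent. For the regularity $(iv)$ I would post-compose with the convolution operator of Lemma~\ref{covol}, working in local charts on $G_N$: this produces simultaneously $d_1$-continuous and $d_\infty$-Lipschitz output, and a sufficiently fine kernel changes the plane field by less than the opening of $C_i^s$, so $(i)$, $(ii)$, $(iii)$ survive the smoothing. The main obstacle is that $(F^\delta)^{-1}$ blows up as $\delta\to 0$ on $\ker p$, so one cannot iterate the pull-back freely: the cone argument is essential because it produces a plane that already lies in a $\delta$-independent stable cone, and the pull-back acts on such planes without degenerating. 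Keeping all estimates uniform in $\delta$, which is what ensures that the constants $\lambda$ and $K$ of Proposition~\ref{principal} do not blow up, reduces to controlling the opening of $C_i^s$; and that opening is itself controlled by the strong transversality relation \eqref{tune}, which makes $\ker p_{x_0}$ sit transversely to the $f$-stable direction.
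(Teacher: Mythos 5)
Your broad outline—increasing induction on $i$, an invariant-plane field near $\arr\Omega_i$ by a graph-transform/cone argument, then extension into $V_i$ using the filtration—agrees with the paper's strategy, but two steps as you have written them fail, and the fixes for them are in fact the real technical content of the proof.

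First, the extension by pull-back along the first entry time $n(\underline x)$ is not well-defined and does not produce a plane field of constant dimension. When $\arr f^{n(\underline x)}(\underline x)$ lands in an earlier $V_j$ ($j<i$), the pull-back of $E^s_j$ has the dimension of $E^s_j$, which in general is \emph{strictly larger} than the dimension of $E^s_i$ (since $E^s_{\underline x}=\ker p_{x_0}\oplus T_{x_0}W^s(x_0,f)$ and the stable dimensions of the basic pieces differ). Thus a piece of $V_i$ that maps into $V_i^0$ would carry one dimension, and a piece that maps into $V_j$ another, contradicting $(iv)$. Moreover the assertion that "the value does not depend on the choice of $n(\underline x)$ because any two candidates lie in the same stable cone" does not give equality: two distinct planes can easily share a cone. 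This is precisely why the paper does \emph{not} pull back $E^s_j$; instead it builds an auxiliary section $\tilde E$ of the \emph{correct} dimension using orthogonal projections $q_j$ onto the already-constructed $E^s_j$ (Lemma~\ref{tildeE}, Claim~\ref{claim71}), with control coming from the Lambda-lemma and the strong transversality condition, and then glues $\tilde E$ to $F^{\delta\#}\tilde E$ via a $\Mor^\infty_0$ partition of unity before taking the fixed point of a modified operator $F^{\delta\star}$. That gluing is the mechanism that actually forces the inclusions $(ii)$.

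Second, producing regularity $(iv)$ by post-composing with the convolution of Lemma~\ref{covol} would destroy the \emph{exact} invariance in $(i)$: after smoothing, $F^\delta(E^s_{i\underline x})$ would only approximately lie in $E^s_{i\arr f(\underline x)}$, whereas $(i)$ and the subsequent use of $J_{is}$ in \eqref{eq:Ji-def} need strict inclusion so that $F^\delta_\star$ truly preserves $E^s_i$. In the paper, the convolution tools enter only through the smooth seed $E'$ around which the graph transform contracts and through the partitions of unity of Corollary~\ref{partition1}; the $d_\infty$-Lipschitz regularity of the fixed point is then obtained \emph{a priori}, by showing that for a suitable $\Lambda\ge(1-\lambda)^{-1}L_{\delta,k}$ the closed subset of sections with Lipschitz constant $\le\Lambda$ is nonempty (it contains $E'$) and invariant under $({F^{\delta\#}})^k$, so the unique fixed point inherits the Lipschitz bound. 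You would need to adopt that mechanism rather than smooth after the fact.

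Finally, a smaller but real point: the graph transform at step $i>1$ shrinks its domain ($Z_i\cap\arr f^{-1}(Z_i)\subsetneq Z_i$), so one cannot literally run a contraction-mapping argument on a single function space; the paper resolves this precisely through the $\tilde E$-gluing and the operator $F^{\delta\star}$ that agrees with the frozen glued value $E^0$ on the missing part. Your proposal does not acknowledge this mismatch of source and target.
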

Figure \ref{EideltaS} depicts an example of such plane fields.

\begin{figure}[h]
    \centering
        \includegraphics{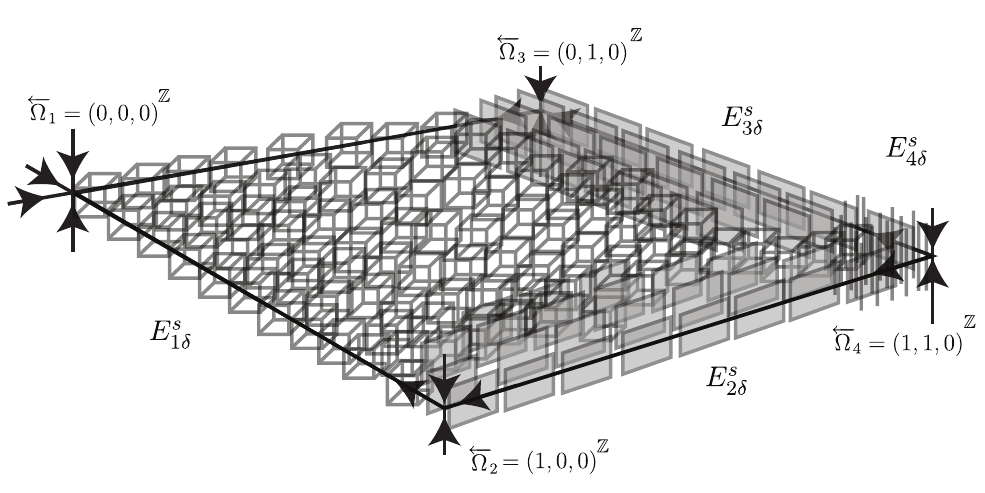}
    \caption{Plane fields $E_{i}^s$ in the example given by $f\colon (x,y,z)\mapsto (x^2,y^2, 0)$. }
\label{EideltaS}
\end{figure}

\begin{remark}\label{defdistancesurensemblesdifferents}
From the definition given in \textsection \ref{preli}, Property $(iii)$ means that for every $i$, for every compact subset $C$ of $W^s(\arr \Omega_i)$, for every $\epsilon>0$, there exists a compact neighborhood $U$ of $C$ in $V_i$ such that for every $\delta$ sufficiently small 
\[d_H(Graph(E_{i}^s|U), Graph(E^s|C))\le \epsilon,\]
where $d_H(\cdot,\cdot)$ denotes the Hausdorff distance of compact subsets of $\arr
M_f\times G_N$ induced by the distance $d_1+ d_G$. We notice that $U$ depends on $C$ and $\epsilon$ but not on $\delta$ small enough.
\end{remark}
\begin{remark}\label{defproche} Property $(iv)$ means that $E_{i}^s$ is of constant dimension, $d_1$-continuous, and that for every compact subset $C$ of $V_i$ there exists a constant $L_C^\delta$ such that:
\[d_G(E_{i\underline x}^s, E_{i\underline y}^s)\le L_C^\delta d_\infty(\underline x, \underline y),\quad \forall \underline x, \underline y\in C.\]
\end{remark}
 
In the diffeomorphism case, to obtain the existence of $(E^u_{i\delta})_i$ it suffices to first push forward by $F^\delta$ each of the plane field $E^s_{i}$ on $\cup_n \arr f^n(V_i)$ (which is a neighborhood of $W^u(\arr \Omega_i)$, and then to apply the same proposition to $\arr f^{-1}$. In our case, even though $\arr f$ is invertible, the bundle map $F^0$ is not. However, in Lemma \ref{Finversible}, we saw that $F^\delta$ is invertible for every $\delta>0$. Nonetheless, the norm of the inverse of this map depends on $\delta$, and so the angle between $E_{i}^s$ and $E_{i}^u$ as well. However in Proposition \ref{principal} such an angle must be bounded by a constant which is independent of $\delta$ (and this is necessary in the proof of Proposition \ref{pro:contract-property}). 

Hence we must redo a similar construction, still in a neighborhood of each $W^s(\arr \Omega_i)$ since it is the only place where we control  the singularities. 

Another difference in the construction of $E_{i}^u$ is the following: to construct the plane field $E_{i}^u$ we will not be allowed to pull back, since the critical set might intersect $W^s (\arr \Omega_i)$, and a pull back by $F^0$ would contain critical vectors which belong to $E^s$, this would contradict the angle condition $(iii)$ for $\delta>0$. Hence the construction of $E_{i}^u$  must be done in compact set in a small neighborhood of $W^s(\arr \Omega_i)$ via push forward.

\begin{proposition}
  \label{fondapropu} There exist $K>0$, an open cover
  $(W_i)_{i=1}^{q}$ of $\arr M_f$, where each $W_i$ contains $\arr \Omega_i$ and is included in $V_i$, 
  such that for every $\delta>0$,
 there exists a subbundle $E_i^u$ of $W_i\times\R^N\to W_i$
  satisfying the following properties for $i\in [1,q]$:
  \begin{enumerate}[(i)]
  \item For every $\underline x\in W_i\cap \arr f^{-1}(W_i)$,
    the map $F^\delta$ sends $E_{i\underline
      x}^u$ into $E_{i\arr
      f(\underline x)}^u$.
  \item For every $j\ge i$, for every $\underline x\in W_i\cap \arr f^{-1}(W_j)$ the following inclusion holds:
\[F^\delta (E_{j{\underline x}}^u)\supset E_{i\arr f({\underline x})}^u.\]
  \item $E^s_{i\underline x}\oplus E^u_{i\underline x}=\R^N$,
    for every $\underbar x\in W_i$,
the angle between $E_{i}^s$ and $E_{i}^u$ is bounded from
    below by $K^{-1}$.
  \item The subbundle $E_{i}^u$ is of constant dimension, $d_1$-continuous and
    $d_\infty$-Lipschitz.
  \item For every $\underline x\in W_i$, it holds
    \begin{displaymath}
      \|{F^\delta}(v^u)\|\ge \|v^u\|/K, \quad\forall v^u\in
      E^u_{i\underline x}. 
    \end{displaymath}
  \item[$(vi)$] For every $q'$, if $\underline x\in W_{q'}$ then $\arr f(\underline x)\notin \cup_{j>q'} W_j$ and $\cap_{n\in \mathbb Z} \arr f^n(W_{q'})=\arr \Omega_{q'}$.
  \end{enumerate}
\end{proposition}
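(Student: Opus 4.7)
The strategy for Proposition~\ref{fondapropu} mirrors Proposition~\ref{fondaprops}, but with the crucial twist, already stressed in the remark preceding the proposition, that each $E^u_i$ is built by \emph{forward} iteration of $F^\delta$ starting from $\arr\Omega_i$, never by pullback. A pullback would inject $\ker p_{x_0}$ into the candidate bundle, forcing $E^u_i$ to meet $E^s_i$ at critical points of $f$ and thereby breaking property~(iii); moreover, $\|(F^\delta)^{-1}\|$ blows up as $\delta\to 0$, so any bound obtained by pulling back would be $\delta$-dependent and would destroy the $\delta$-independence of $K$ in~(iii) and~(v).

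To begin, I would choose an adapted filtration $\varnothing=M_0\subset\cdots\subset M_q=\arr M_f$ as on page~\pageref{filtration}, and carve out $W_i\supset\arr\Omega_i$ as a slightly thickened open version of $M_i\setminus\arr f(M_{i-1})$; a standard adjustment makes $(W_i)_{i=1}^q$ an open cover of $\arr M_f$ with $W_i\subset V_i$ and property~(vi). On $\arr\Omega_i$ the abstract unstable bundle $E^u$ of Proposition~\ref{prop:Eu-concrete} is available, $F^0$-invariant, and expanded with a constant independent of $\delta$. Using Corollary~\ref{approximation general} I extend $E^u|_{\arr\Omega_i}$ to a $d_1$-continuous, $d_\infty$-Lipschitz plane field on a small $d_1$-neighborhood $N_i\subset W_i$ of $\arr\Omega_i$, and surround it by a thin cone field $\mathcal{C}^u_i$. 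By the uniform expansion of Proposition~\ref{prop:Eu-concrete}(2) and the continuity of $(\delta,x)\mapsto F^\delta_x$ with $F^0_x=T_xf\circ p_x$, for $\delta$ and $N_i$ small enough $F^\delta$ sends $\mathcal{C}^u_i$ strictly inside itself and expands every vector of $\mathcal{C}^u_i$ by a uniform factor $K_0>1$ independent of both $\delta$ and $i$. The classical forward cone-intersection argument then produces a $d_1$-continuous, $d_\infty$-Lipschitz $F^\delta$-invariant subbundle $E^u_i$ of constant dimension on the set of points of $N_i$ whose entire forward orbit stays in $N_i$.

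To extend $E^u_i$ to all of $W_i$ I push forward: for $\underline x\in W_i$ let $n(\underline x)\ge 0$ be the smallest integer with $\arr f^{-n(\underline x)}(\underline x)\in N_i$; such an $n$ exists and is uniformly bounded on $W_i$ because $\bigcap_n\arr f^n(W_i)=\arr\Omega_i\subset N_i$ together with compactness. Then set $E^u_{i\underline x}:=(F^\delta)^{n(\underline x)}\bigl(E^u_{i\,\arr f^{-n(\underline x)}(\underline x)}\bigr)$, which is independent of $n$ by forward invariance on $N_i$. Properties~(i) and~(ii) are tautological from the push-forward definition; property~(v) inherits the constant $K_0$ from the cone construction and the uniform bound on $n(\underline x)$; property~(iv) follows from $d_1$-continuity and $d_\infty$-Lipschitz regularity on $N_i$, smoothness of $F^\delta$ in its spatial variable, and $d_1$-continuity of $\arr f^{-1}$ on compact subsets of $\arr M_f$.

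The main obstacle is~(iii), the $\delta$-independent lower bound on the angle between $E^s_i$ and $E^u_i$. On $\arr\Omega_i$ itself the angle is uniformly bounded below by hyperbolicity: Proposition~\ref{fondaprops}(iii) together with Remark~\ref{defdistancesurensemblesdifferents} makes $E^s_i$ compact-open close to the true $E^s$, and by construction $E^u_i$ is compact-open close to the abstract $E^u$ of Proposition~\ref{prop:Eu-concrete}, whose direct sum with $E^s$ is non-degenerate. Thus a $\delta$-independent angle bound survives on a small neighborhood of $\arr\Omega_i$ in $N_i$. Propagating it to all of $W_i$ is the delicate point: $F^\delta$ maps $E^s_i$ into $E^s_i$ (by Proposition~\ref{fondaprops}(i) restricted to $W_i$) and $E^u_i$ into $E^u_i$, with $\|(F^\delta|_{E^u_{i\underline x}})^{-1}\|\le K_0^{-1}$ \emph{independently of $\delta$}. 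An elementary trigonometric estimate then shows that the angle between $E^s_i$ and $E^u_i$ can decrease under $F^\delta$ by at most a multiplicative constant depending only on $K_0$ and $\sup_{x,\delta}\|F^\delta_x\|$; iterating at most $\sup_{\underline x\in W_i}n(\underline x)$ times yields the required uniform angle bound and completes~(iii).
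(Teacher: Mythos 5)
Your construction near $\arr\Omega_i$ (smooth approximation of $E^u|_{\arr\Omega_i}$, thin invariant cone field, forward cone-intersection) is a reasonable alternative to the paper's fixed-point argument and would indeed produce a $d_1$-continuous, $d_\infty$-Lipschitz bundle $E^u_i$ of constant dimension on a small neighborhood $N_i$ of $\arr\Omega_i$. The problem is everything after that.

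The push-forward extension fails. You define $E^u_{i\underline x}$ via the smallest $n(\underline x)\ge 0$ with $\arr f^{-n(\underline x)}(\underline x)\in N_i$, and assert existence and uniform boundedness of $n$ from $\bigcap_{n\in\Z}\arr f^n(W_i)=\arr\Omega_i$. But that condition only constrains points whose \emph{entire two-sided} orbit stays in $W_i$; it says nothing about where a single point's backward orbit goes. Take $\underline x\in W_i\cap W^s(\arr\Omega_i)$ not on the unstable set of $\arr\Omega_i$: under $\arr f^{-1}$ it drifts away from $\arr\Omega_i$, leaves $N_i$ and (since $(W_j)_j$ covers $\arr M_f$) enters some $W_j$ with $j>i$, never returning to $N_i$. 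Your $n(\underline x)$ simply does not exist for such $\underline x$, and they fill a substantial portion of $W_i$ because, as the paper emphasizes, the $W_i$ must extend along $W^s(\arr\Omega_i)$ --- this is precisely the place where the critical set must be controlled. For the same reason, property $(ii)$ is not ``tautological'': it relates $E^u_i$ on $W_i$ to $E^u_j$ on $W_j$ for $j>i$, and two bundles constructed independently by push-forward from disjoint $N_i$'s have no reason to be nested after applying $F^\delta$.

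The paper's proof is organized precisely to fill this hole: it runs a \emph{decreasing} induction on $q'$, and at step $q'$ constructs $E^u_{q'}$ on a compact set $Y_{q'}$ that carves out $W^s(\arr\Omega_{q'})$ minus a preimage of $O_{q'}:=\bigcup_{j>q'}W_j$. The push-forward operator $F_\#$ on $Y_{q'}$ then has target larger than its domain, and the missing boundary data --- the planes a backward orbit should see as it escapes toward $\arr\Omega_j$, $j>q'$ --- is supplied by Lemma~\ref{tildeEu}, which projects onto the already-constructed $F^\delta(E^u_j)$ and glues with a $\Mor^\infty_0$ partition of unity. This gluing is also what makes $(ii)$ hold. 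Finally, your ``elementary trigonometric estimate'' for $(iii)$ glosses over the real difficulty: near critical points $F^0=Tf\circ p$ is degenerate, so without the strong-transversality input of \eqref{transversilty3} there is no a priori $\delta$-independent lower bound on $\|F^\delta(u)\|$ for $u$ transverse to $E^s$, and an angle bound cannot be propagated along $W^s(\arr\Omega_{q'})$ by norm estimates on $F^\delta$ alone.
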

Figure \ref{EideltaU} depicts an example of such plane fields. 

\begin{figure}[h]
    \centering
        \includegraphics{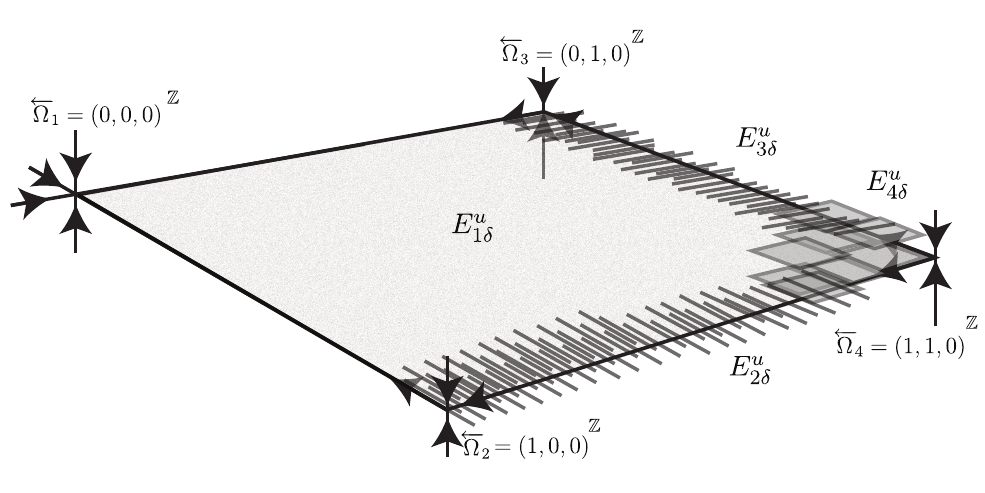}
    \caption{Plane fields $E_{i}^u$ in the example given by $f\colon (x,y,z)\mapsto (x^2,y^2, 0)$.}
\label{EideltaU}
\end{figure}

From the two latter propositions,  we easily deduce:  
\begin{proof}[Proof of  Proposition \ref{principal}]
By Propositions \ref{fondaprops} and \ref{fondapropu}, we have immediately properties $(i)$-$(ii)$-$(iii)$-$(iv)$-$(v)$-$(vi)$ of Proposition \ref{principal}. To prove property $(vii)$, we remark that by  Proposition \ref{fondaprops} $(i)$ and $(iii)$ together with the hyperbolicity of $\arr \Omega$, the bundle $E^s_{i}$ is contracted by $F^\delta$ over a neighborhood of $\arr \Omega_i$, for every $i$.  Moreover, by properties $(i)$-$(iii)$-$(iv)$ of Proposition \ref{fondapropu}, the bundle  $E^u_{i}$ is close to $E^u|\arr \Omega_i $, and so expanded by $F^\delta$ on a neighborhood of $\arr \Omega_i$ (see Proposition \ref{prop:Eu-concrete}).
\end{proof}

\section{Proof of Proposition \ref{fondaprops}}\label{princis}
Let us recall that $(M_j)_{j=1}^q$ is a filtration adapted $(\arr
\Omega_j)_{i=1}^q$ (see \textsection \ref{filtration} for details). An example of such a filtration is depicted figure \ref{filtrationpic}.

\begin{figure}[h]
    \centering
        \includegraphics{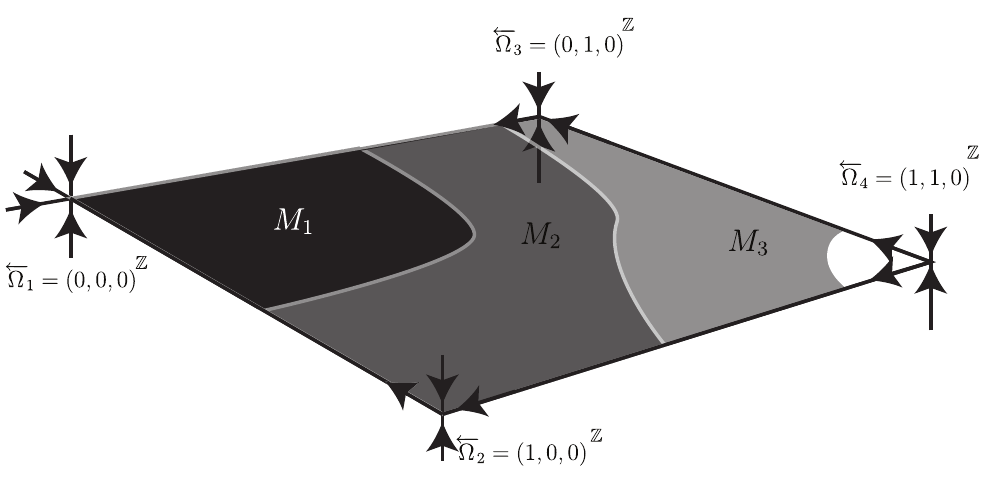}
    \caption{A filtration for the example given by $f\colon (x,y,z)\mapsto (x^2,y^2, 0)$.}
\label{filtrationpic}
\end{figure}

We are going to construct $(E_{i}^s)_i$ by (increasing) induction on $i$. Here it is the induction hypothesis at the step $i$:

For every $N_{i}\le 0$, there exist  neighborhoods $(V_j^i)_{j=1}^i$ of respectively $(W^s(\arr \Omega_j)\cap \arr f^{N_{i}}(M_{i}))_{j=1}^q$ in $ \arr f^{N_{i}}(M_{i})$, there are functions 
\[E_{j}^s\colon V_j^i\to G_N\]
which satisfy the following properties for every $j\le i$:
\begin{enumerate}[$(i)$]
\item for every ${\underline x}\in V^{i}_j\cap \arr f^{-1}(V^{i}_j)$ the following inclusion holds:
\[F^\delta(E_{j{\underline x}}^s)\subset E_{j\arr f({\underline x})}^s.\]
\item for every $k\ge j$, for every ${\underline x}\in V^{i}_k\cap V^{i}_j$ the following inclusion holds:
\[E_{k{\underline x}}^s\subset E_{j{\underline x}}^s.\]
\item $E_{j}^s$ is compact-open close to $E^s|W^s(\Omega_j)\cap V_j^i$, when $\delta$-is small. 
%
\item   $E_{j}^s$ is of constant dimension, $d_1$-continuous, and locally Lipschitz for the  metric $d_{\infty}$.
\end{enumerate}
We remark that  the step $i=q$ gives the statement of Proposition \ref{fondaprops} with $V_i:= \arr f^{-1} (V_i^q)\cap V_i^q$ for every $i$.

We recall that each $E_j^s$ depends on $\delta$. During several parameters will be fixed. 

The order is the following at the step $i$. First an arbitrary negative integer  $N_i$ is given. Then $\eta>0$ is chosen. Depending on $N_i$ and $\eta$, we will suppose $\delta$ small. The induction hypothesis is used with $\delta$ and $N_{i-1}$ chosen large in function of $N_i$ and $\eta$.

\paragraph{Step $i=1$}
Let $N_1\le 0$, and put $K_1:= \arr f^{N_1}(M_1)$. We notice that $W^s(\arr \Omega_1)$ is an open set of $\arr M_f$. Hence we put $V_1^1:=K_1$. Note that $\arr f(K_1)\subset K_1$.

Let $K_1\ni \underline x\mapsto E^\prime_{\underline x}$ be the restriction to $K_1$ of a smooth approximation 
of the continuous map $E^s|W^s(\arr \Omega_1)\colon W^s(\arr \Omega_1)\to G_N$ given by Corollary \ref{approximation general}.

Observe that $E^\prime$ is uniformly close to $E^s|K_1$. Moreover it is $d_1$-continuous and $d_{\infty}$-Lipschitz.
We recall that the Banach manifold $C^0(K_1, G_N)$ was defined in \textsection \ref{preli}.


For all $\eta>0$ and $\delta>0$, the following is well defined 
on the closed ball $\bar B_{C^0}(E^\prime,\eta)\subset C^0(K_1, G_N)$ with image in $C^0(K_1, G_N)$.
  \begin{displaymath}
    F^{\delta\#}:=\bar B_{C^0}(E^\prime,\eta)\ni P\mapsto \left[\underline x\mapsto
      F_{x_0}^{\delta^{-1}} (P_{\arr f(\underline x)})\right], \quad \text{with }x_0:=\pi_0(\underline x). 
  \end{displaymath}
By hyperbolicity, for $\delta$ small enough and   $E^\prime$ sufficiently close to $E^s$,
 there exists some $k\in\N$ such that ${F^{\delta\#}}^k$ is $\lambda$-contracting and sends the closed ball $\bar B_{C^0}(E^\prime,\eta)$ into itself. 

  Let $E_{1}^s$ be the unique fixed point of $F^{\delta\#}$ in $B_{C^0}(E^\prime,\eta)$. By definition,
  condition $(i)$ is satisfied. 

  Condition $(iii)$ follows from the fact that  $\eta$ can be taken small when $\delta $ is small. 
  
  It remains only to show that $(iv)$ holds. First let us recall that
  $E^s_{1}\in B_{C^0}(E^\prime,\eta)$ and $x_0 \in M\mapsto F^\delta_{x_0}\in
  L_N(\R)$ is of class $C^1$, and so, $K_1\ni \underline x \mapsto F^\delta_{x_0}$ is
  $d_\infty$-Lipschitz. Since $F^\delta$ is moreover invertible,  there exists $L_{\delta,k}$ such that for all
  $\underline x, \underline y\in K_1$ and $P\in B_{C^0}(E^\prime,\eta)$ it holds
  \begin{equation}
    \label{lip1}
    d\big({F^{\delta k}_{\underline x}}^{-1}(P_{ \arr f^k( \underline y)}),
    {F^{\delta k}_{\underline y}}^{-1} (P_{ \arr f^k( \underline y)})\big)\le L_{\delta,k} d_{\infty}( \underline x,  \underline y),
  \end{equation}
  where ${F^{\delta k}_{\underline x}}:= {F^{\delta}_{x_{k-1}}} \circ \cdots \circ  F^{\delta}_{x_0} $ and $x_i:=\pi_i(\underline x)$ the $i^{th}$ coordinate of $\underline x$.

  On the other hand, the map ${F^{\delta\#}}^k$ is pointwise $\lambda$-contracting:
  \begin{equation}
    \label{lip2}
    d\big({F^{\delta k}_{\underline x}}^{-1} (P_{ \arr f^k( \underline x)}),
    {F^{\delta k}_{\underline x}}^{-1} (P_{ \arr f^k( \underline y)})\big)\leq
    \lambda d(P_{ \arr f^k( \underline x)}, P_{ \arr f^k(
\underline       y)}). 
  \end{equation}

  Consequently, adding \eqref{lip1} and \eqref{lip2} we get 
  \begin{equation}
    \label{lip3}
    d\big({F^{\delta^{-k}}_{ \underline x}} (P_{ \arr f^k(
     \underline  x)}), {F^{\delta^{-k}}_{\underline y}} (P_{ \arr f^k( \underline y)}\big)
    \le L_{\delta,k} d_{\infty}( \underline x, \underline  y)+ \lambda d(P_{
      \arr f^k( \underline x)}, P_{ \arr f^k( \underline y)}).
  \end{equation}
For every $d_\infty$-Lipschitz distribution $P$ let us denote by $\Lambda(P)$ its Lipschitz constant. It holds for every $k$:
  \begin{equation}
    \label{lip4} 
    d(P_{ \arr f^k( \underline x)}, P_{ \arr f^k( \underline y)})\le \Lambda(P) d_{\infty} ( \underline x, \underline y).
  \end{equation} 

  Thus by  (\ref{lip3}) and (\ref{lip4}):
  \begin{displaymath}
    d\big({F^{\delta k}}^{-1}_{\underline x} (P_{ \arr f^k( \underline x)}), {F^{\delta k}}^{-1}_{\underline y}
    (P_{ \arr f^k( \underline y)}\big) \le (L_{\delta,k} + \lambda \Lambda(P))d_{\infty} ( \underline x, \underline  y) 
  \end{displaymath}

  Consequently the closed subset of $B_{C^0}(E^\prime,\eta)$ formed by sections with $d_\infty$-Lipschitz constant smaller or equal than
  any $\Lambda \ge (1-\lambda)^{-1}L_{\delta,k}$ is forward invariant under ${F^{\delta\#}}^k$.   We recall that $E^\prime$ is $d_{\infty}$-Lipschitz.
Hence if $\Lambda\ge \Lambda(E^\prime)$, this subset is  non empty (it contains $E^\prime$), thus  there exists a fixed point $d_{\infty}$-Lipschitz in $\bar B_{C^0}(E^\prime,\eta)$. By uniqueness, the fixed point $K_1\ni  \underline x \mapsto  E^s_{1 \underline x}\in G_N$ is $d_{\infty}$-Lipschitz.
  
\begin{flushright}
$\square$
\end{flushright}

\paragraph{Step $i-1\rightarrow i$}
Let $N_i$ be an arbitrary negative integer. Put:
\[K_{i}:=W^s(\arr \Omega_i)\cap \arr f^{N_i}( M_{i}).\]

Let us begin as in the step $i=1$.

We can extend $d_1$-continuously the section $E^s|K_{i}\colon K_{i}\to G_N$ to an open neighborhood of $K_i$. 
Let $\underline x\mapsto E^\prime_{\underline x}$ be a smooth approximation given by Corollary \ref{approximation general} of such a continuous extension. 

The section $E^\prime$ is well defined on a small neighborhood $Z_i$  of $K_{i}$ in $\arr f ^{N_i}(M_i)$ of the form:
\[Z_i :=  \arr f^{N_i}(M_i)\setminus int\, \arr f^{N_{i-1}+1}(M_{i-1}),\quad N_{i-1}\le 0\]
Indeed note that $K_{i}= \arr f^{N_i}(M_i) \setminus   \cup_{n\le 0} \arr f^n (M_{i-1})$, so $Z_i$ is close to $K_i$ whenever $-N_{i-1}$ is large enough (see fig. \ref{ConstructionZi}).
\begin{figure}[h]
    \centering
        \includegraphics{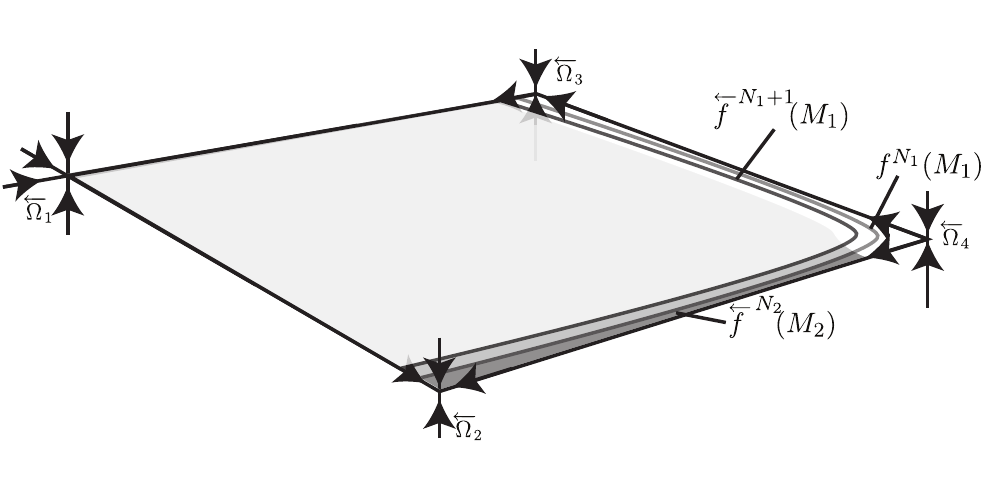}
    \caption{Construction of $Z_2= f^{N_2}(M_2)\setminus int\, f^{N_1+1}(M_1)$ in the example given by $f\colon (x,y,z)\mapsto (x^2,y^2, 0)$.}
\label{ConstructionZi}
\end{figure}

 Observe that $E^\prime|K_{i}$ is $C^0$-close to $E^s|K_i$, $d_1$-continuous and $d_{\infty}$-Lipschitz.

Hence,  for every $\eta$ small, for every $Z_i$ and $\delta$ small enough, for  $E^\prime$ sufficiently close to $E^s$, the following is well defined on the ball $\bar B_{C^0}(E^\prime,\eta)\subset C^0(Z_i, G_N)$ with image in $C^0( Z_i \cap \arr f^{-1}(Z_i), G_N)$:  
  \begin{displaymath}
    F^{\delta\#}:=\bar B_{C^0}(E^\prime|Z_i,\eta)\ni P\mapsto \left[ Z_i \cap \arr f^{-1}(Z_i)\ni \underline x\mapsto
      F_{x_0}^{\delta^{-1}} (P(\arr f(\underline x)))\right]. 
  \end{displaymath}

By hyperbolicity, for $\eta$ small and then  for $Z_i$ and $\delta$ small enough and $E^\prime$ sufficiently close to $E^s$, there exist some $k\in\N$,  such that the following map:
 \begin{displaymath}
    {F^{\delta\#}}^k:=\bar B_{C^0}(E^\prime|Z_i,\eta)\ni P\mapsto \left[\bigcap_{0\le l\le k} \arr f^{-l}(Z_i)\ni \underline x\mapsto
      {F_{x_0}^\delta}^{-k} (P(\arr f^k(\underline x)))\right]. 
  \end{displaymath}
is contracting and sends the closed ball $\bar B_{C^0}(E^\prime|Z_i,\eta)$ into $\bar B_{C^0}(E^\prime| \cap_{l\le k} \arr f^{-l}(Z_i),\eta)$.  

As the target space is not the same as the source space, we cannot conclude to the existence of a fixed point. We are going to extend the sections  in the image  of ${F^{\delta\#}}$ by a section constructed by the following lemma shown below:
\begin{lemma}\label{tildeE} 
There exist a sequence of negative integers $(N_j)_{j<i-1}$ and a section $\tilde E\in B_{C^0}(E^\prime|Z_i, \eta/2)$ which is $d_1$-continuous and $d_\infty$-Lipschitz such that for every $j<i$:
\begin{enumerate}[(1)] 
\item $Z_j :=   \arr f^{N_{j}} ( M_{j})\setminus int\, \arr f^{N_{j-1}+1} ( M_{j-1})\subset int\, V_j^{i-1}.$
\item $\forall \underline x \in Z_j \cap Z_i ,\quad  \tilde E(\underline x)\subset E_{j}^s(\underline x)$.
\end{enumerate}
\end{lemma}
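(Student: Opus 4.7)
The proof splits naturally into two parts: first, choosing the sequence of parameters $(N_j)_{j<i-1}$ to achieve condition (1); second, constructing the plane field $\tilde E$ itself and verifying (2) together with the $d_1$-continuity, $d_\infty$-Lipschitz regularity, and $C^0$-closeness to $E'$.

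\textbf{Choice of $(N_j)_{j<i-1}$.} The plan is to pick them inductively, running $j$ from $i-1$ down to $1$. Suppose $N_{i-1},\dots,N_j$ have been fixed, and let $V_j^{i-1}$ denote the neighborhood supplied by the induction hypothesis. A point $\underline y\in\arr f^{N_{i-1}}(M_{i-1})\setminus V_j^{i-1}$ sits outside a neighborhood of $W^s(\arr \Omega_j)$; since its $\omega$-limit lies in some basic piece $\arr\Omega_k$ with $k<j$, and since the filtration satisfies $\arr f(M_{j-1})\subset\mathrm{int}\,M_{j-1}$, the forward orbit of $\underline y$ eventually enters $M_{j-1}$. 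By compactness of $\arr f^{N_{i-1}}(M_{i-1})\setminus V_j^{i-1}$, there exists a uniform $n_0(j)\in\mathbb{N}$ such that $\arr f^{n_0}(\underline y)\in M_{j-1}$ for every such $\underline y$. Taking $N_{j-1}\leq-n_0-1$ then yields $\arr f^{N_{i-1}}(M_{i-1})\setminus V_j^{i-1}\subset\arr f^{N_{j-1}+1}(M_{j-1})$, so that $Z_j\subset\mathrm{int}\,V_j^{i-1}$, proving~(1).

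\textbf{Construction of $\tilde E$.} The crucial structural input is induction hypothesis~(ii): whenever $\underline x$ belongs to several $V_j^{i-1}$'s, the subspaces $E_j^s(\underline x)$ are \emph{nested}, $E_k^s(\underline x)\subset E_j^s(\underline x)$ for $k\geq j$. Hence the family of constraints ``$\tilde E(\underline x)\subset E_j^s(\underline x)$ for every $j$ with $\underline x\in Z_j$'' is consistent: it reduces to a single inclusion into the smallest $E_j^s$ at that point. The plan is to realize this reduction smoothly. Fix, via Corollary~\ref{partition1}, a partition of unity $\{\rho_j\}_{j=0}^{i-1}\subset\Mor^\infty_0$ on $Z_i$ subordinate to the cover $\{\tilde Z_j\cap Z_i\}_{j<i}\cup\{Z_i\setminus\bigcup_{j<i}Z_j\}$, where each $\tilde Z_j$ is a slightly enlarged open neighborhood of $Z_j$ still contained in $V_j^{i-1}$. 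Since $E_j^s$ is compact-open close to $E^s$ (hypothesis~(iii)) and $E'$ is close to $E^s|Z_i$, on a small neighborhood of $E^s$ in the Grassmannian bundle the orthogonal projection $\Pi_j$ of a $\dim E'$-plane onto $E_j^s$ is well defined and smoothly depending on the base point; moreover $\Pi_j$ maps any subspace of $E_k^s$ with $k\geq j$ to a subspace of $E_k^s$.

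I would then set $\tilde E_{i}:=E'$ and define recursively, for $j=i-1,i-2,\dots,1$,
\[
\tilde E_{j}(\underline x):=\exp^{G_N}_{\tilde E_{j+1}(\underline x)}\Big(\rho_j(\underline x)\cdot\exp^{G_N,-1}_{\tilde E_{j+1}(\underline x)}\bigl(\Pi_j(\tilde E_{j+1}(\underline x))\bigr)\Big),
\]
that is, the Grassmannian geodesic at parameter $\rho_j(\underline x)$ between $\tilde E_{j+1}(\underline x)$ and its projection onto $E_j^s(\underline x)$. Finally $\tilde E:=\tilde E_{0}$. Working in local graph charts on $G_N$ where the sub-Grassmannian ``planes inside $E_k^s$'' is a linear subvariety, the nestedness ensures that, whenever $\rho_j(\underline x)=1$, $\tilde E_j(\underline x)$ lies in $E_j^s(\underline x)$ and hence in every $E_k^s(\underline x)$ with $k\leq j$; while on the support of $\rho_k$ for $k>j$ with $\underline x\in Z_k$, the containment $\tilde E_{j+1}(\underline x)\subset E_k^s(\underline x)$ is preserved by $\Pi_j$ and by geodesic interpolation. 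This gives~(2). The $d_1$-continuity and $d_\infty$-Lipschitz regularity of $\tilde E$ follow because all ingredients ($E'$, the $E_j^s$, the $\rho_j$, the $\Pi_j$) share these properties; and the bound $\|\tilde E-E'|Z_i\|_{C^0}\leq \eta/2$ is obtained by taking $\delta$ small enough that $E'$ and each $E_j^s$ are both arbitrarily close to $E^s$.

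\textbf{Main obstacle.} The principal difficulty is reconciling three requirements at once: (a) the \emph{strict} inclusion $\tilde E(\underline x)\subset E_j^s(\underline x)$ on each $Z_j\cap Z_i$, (b) $d_1$-continuity and $d_\infty$-Lipschitzness across the boundaries where these constraints activate, and (c) $C^0$-closeness to $E'$. Naively setting $\tilde E$ equal to a ``projection onto the smallest containing $E_j^s$'' produces discontinuities. The nestedness coming from hypothesis~(ii) is what makes the ordered sequential geodesic interpolation above consistent; without it, the projections would interfere and neither (a) nor (b) could be simultaneously achieved.
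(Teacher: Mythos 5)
Your overall strategy is the right one and matches the paper's: cut up $Z_i$ by neighbourhoods $\hat Z_j$ of the $Z_j$'s, and successively project $E'$ onto the fields $E_j^s$ with a cutoff, using the nestedness coming from induction hypothesis~(ii) to make the constraints compatible. The choice of $(N_j)$ by pushing the complement of $V_j^{i-1}$ into $M_{j-1}$ after a uniform time is equivalent to the paper's observation that $Z_j$ converges to $W^s(\arr\Omega_j)\cap\arr f^{N_j}(M_j)$ as $-N_{j-1}\to\infty$. Reversing the order of the projections (decreasing $j$ instead of increasing) and using Grassmannian geodesics instead of the linear interpolation $\{\rho q(u)+(1-\rho)u\}$ are both legitimate variants. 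But there are two genuine gaps.

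First, you use a partition of unity $\{\rho_j\}$ subordinate to a cover, whereas the paper uses \emph{bump} functions with $\rho_j\equiv 1$ on all of $Z_j$ and $\rho_j\equiv 0$ outside $\hat Z_j$. This is not cosmetic. The $Z_j$'s overlap (the stable sets $W^s(\arr\Omega_k)$ accumulate on each other), so for a partition of unity one typically has $\rho_j<1$ on parts of $Z_j$. Your verification of condition~(2) only treats the case $\rho_j(\underline x)=1$; where $\rho_j(\underline x)<1$ on $Z_j\cap Z_i$, the geodesic interpolation does not land inside $E_j^s(\underline x)$, so~(2) fails on precisely the region where it must hold. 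You need individual cutoffs with $\rho_j\equiv 1$ on $Z_j$, not a partition of unity.

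Second, and more seriously, the $C^0$ bound $\|\tilde E - E'|Z_i\|_{C^0}\le\eta/2$ does not follow from ``$E'$ and each $E_j^s$ are both close to $E^s$.'' The field $E^s$ defined in~\eqref{eq:Es-def-for-F0} is \emph{discontinuous} across the boundary between $W^s(\arr\Omega_i)$ and $W^s(\arr\Omega_j)$: on $Z_j\cap Z_i$, the plane $E'$ approximates $E^s|K_i$ (stable direction of points attracted to $\arr\Omega_i$) while $E_j^s$ approximates $E^s|W^s(\arr\Omega_j)$ (stable direction of points attracted to $\arr\Omega_j$), and these generically have different dimensions. Closeness of both to ``$E^s$'' does not make them close to each other, so \emph{a priori} the orthogonal projection $\Pi_j$ could move $E'$ by a fixed amount, ruining the bound. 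What makes the projection small is the strong transversality condition via the Lambda-lemma: as points $\underline x_n\in W^s(\arr\Omega_j)$ approach $W^s(\arr\Omega_i)$, every accumulation plane of $E^s_{\underline x_n}$ \emph{contains} $E^s|W^s(\arr\Omega_i)$. This near-containment is exactly what Claim~\ref{claim71} establishes, and it is the nontrivial geometric input that bounds each $\Pi_j$ by $\eta/2i$ on $\hat Z_j$. Your proposal omits it entirely, so the closeness estimate --- and consequently the conclusion $\tilde E\in B_{C^0}(E'|Z_i,\eta/2)$ --- is unsubstantiated.
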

\paragraph{Gluing $\tilde E$ to $    F^{\delta\#} \tilde E$ and definition of $E_{i}^s$}

We remark that $\tilde E$ and $    F^{\delta\#} \tilde E$ are well defined on:
\[V_i^i := \arr f^{N_i}(M_i)\setminus \arr f^{N_{i-1}}(M_{i-1}).\]

By Corollary \ref{partition1}, there exists a partition of the unity $(\rho, 1-\rho)\in \mathrm{Mor}_0^\infty (\arr M_f)^2$ subordinated to the cover $(\arr f^{N_{i-1}-1}(int\,M_{i-1}), \arr M_f \setminus \arr f^{N_{i-1}}(M_{i-1}))$. 

For $\underline x\in V_i^i$, let $p_{\underline x}$ and $p'_{\underline x}$ be the orthogonal
projections of $\mathbb R^{N}$ onto respectively $\tilde E(\underline x)$  and $    F^{\#} \tilde E(\underline x)$. Put
\begin{displaymath}
  E^0(\underline x):= \{(\rho(\underline x) p_{\underline x}+(1-\rho(\underline
  x)) p'_{\underline x})(u);\; u\in 
  \tilde E(\underline x)\}. 
\end{displaymath}

We notice that $ V_i^i\ni \underline x\mapsto E^0(\underline x)\in G_N$ is $d_1$-continuous and $d_\infty$-Lipschitz. 
Furthermore, for $\underline x\in V_i^i $ close to $\arr f^{N_{i-1}}( M_{i-1})$, the plane $E^0(\underline x)$ is equal to $\tilde E(\underline x)$ and for $\underline x\in V_i^i \setminus \arr f^{N_{i-1}-1}( int\, M_{i-1})$, the plane $E^0(\underline x)$ is equal to $F^{\delta\#} \tilde E_{\underline x}$.

We define
\begin{multline*}
\Pi_\eta:= \{P\in  C^0(V_i^i,G_N):\;  P=E^0\;\text{on}\; V_i^i \setminus \arr f^{N_{i-1}-1}( M_{i-1})\text{
and}\\
 \text{the restriction }P\text{ and } E^\prime\text{ to }\cap_{j=0}^k \arr f^j(V_i^i)\text{ are }\eta-C^0-\text{close}\}.
\end{multline*}
 We remark that the following map is continuous.
\begin{displaymath}
  F^{\delta\star}:=P\in \Pi_\eta\mapsto \left[V_i^i\ni  \underline x\mapsto
    \left\{
      \begin{array}{cl} 
        E^0(\underline x)&\text{if }\underline x\in V_i^i \setminus \arr f^{N_{i-1}-1}( M_{i-1})\\ 
        {F^\delta}_{\underline x}^{-1} (P(\arr f(x)))&\text{otherwise.}
      \end{array}\right.\right] 
\end{displaymath}

  As ${F^{\delta\#}}^k$ is contracting and sends $\bar B_{C^0} (E^\prime|Z_i ,\eta)$ into $\bar B_{C^0} (E^\prime|\cap_j \arr f^j (Z_i) ,\eta)$, the map $F^{\delta \star^k}$ is contracting and sends $\Pi_\eta$ into itself. 

Let $E_{i}^s$ be the fixed point of $F^{\delta\star}$. 

By definition, $E_{i} ^s$ satisfies property $(i)$. Similarly to the step $i=1$, the section $E_{i}^s$ satisfies Properties $(iii)$ and $(iv)$. However all the sections $(E_{j}^s)_{j\le i}$ need to be extended from $(V_j^{i-1})_{j\le  i}$ to $(V_j^{i})_{j\le  i}$ which remains to be constructed. 

\paragraph{Construction of $(V_j^{i})_{j\le  i}$ and extension of $(E_{j}^s)$}

For every $j <i $, we recall that for every   $\underline x\in Z_j\cap Z_i$, the plane $\tilde E_{\underline x}$ is included in  
$E^s_{j\underline x}$. By induction hypothesis $(i)$ and since $V_i^i= Z_i \cap \arr f^{-1}(Z_i)$, for every   $\underline x\in Z_j\cap V^i_i$, the plane $F^{\delta \#}\tilde E_{\underline x}$ is included in $E^s_{j\underline x}$. Hence, for every   $\underline x\in Z_j\cap V^i_i$, the plane $ E^0_{\underline x}$ is included in $E^s_{j\underline x}$. Again by induction hypothesis $(i)$, since the fixed point $E^s_{i\underline x}$ of $F^{ \star}$ is obtained by iterating it, for every   $\underline x\in Z_j\cap V^i_i$, the plane $E^s_{i\underline x}$  is included in $E^s_{j\underline x}$. Put 
\[\tilde V_j:= V_j^{i-1} \cap \cup_{l=j}^{i-1} Z_l\]
By induction hypothesis $(ii)$, for every   $\underline x\in \tilde V_j\cap V^i_i$, the plane  $E^s_{i \underline x}$ is included in $E^s_{j\underline x}$.
Using that $Z_j = \arr f^{N_j}(M_j) \setminus int\, \arr f^{N_{j-1} +1}(M_{j-1})$, we remark that: 
\[\tilde V_j  \supset  V_j^{i-1}  \cap \arr f^{N_{i-1}}(M_{i-1})\setminus int \,  \arr f^{N_{j-1} +1}(M_{j-1})= V_j^{i-1} \setminus int \,  \arr f^{N_{j-1} +1}(M_{j-1}).\]
Hence $\tilde V_j$ is a neighborhood of $W^s(\arr \Omega_j)\cap \arr f^{N_{i-1}}(M_{i-1})$ since $W^s(\arr \Omega_j)$ does not intersect  $\arr f^{N_{j-1}+1}(M_{j-1})$.  Put 
\[ V_j^i:= \cup_{n\ge 0} (\arr f|V_i^i )^{-n}(\tilde V_j)\quad \text{where}\quad V_i ^i =  \arr f^{N_i}(M_i)\setminus \arr f^{N_{i-1}}(M_{i-1})\]
\begin{lemma}
For every $j\le i$, the set $V_j^i$ is a neighborhood of $W^s(\Omega_j)\cap f^{N_i}(M_i)$ in $f^{N_i}(M_i)$.\end{lemma}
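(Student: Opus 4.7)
I would split the verification into the cases $j=i$ and $j<i$, using the filtration structure and the openness of $\tilde V_j$ already established in the paragraph preceding the lemma.

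For $j=i$, the set $V_i^i = \arr f^{N_i}(M_i)\setminus \arr f^{N_{i-1}}(M_{i-1})$ is automatically open in $\arr f^{N_i}(M_i)$: since $\arr f$ is a homeomorphism of $\arr M_f$ and $M_{i-1}$ is compact, the set $\arr f^{N_{i-1}}(M_{i-1})$ is closed. It then suffices to show $W^s(\arr \Omega_i)\cap \arr f^{N_{i-1}}(M_{i-1})=\emptyset$. Indeed, if $\underline x$ lay in this intersection, then $\arr f^{|N_{i-1}|}(\underline x)\in M_{i-1}$, and the filtration property $\arr f(M_{i-1})\subset int\, M_{i-1}$ would trap the forward orbit inside the compact set $M_{i-1}$, so $\omega(\underline x)\subset M_{i-1}$; but $\arr \Omega_i\cap M_{i-1}=\emptyset$ by the filtration identity $\bigcap_{n\in\Z}\arr f^n(M_i\setminus M_{i-1})=\arr \Omega_i$, contradicting $\underline x\in W^s(\arr \Omega_i)$.

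For $j<i$, the preceding discussion shows that $\tilde V_j$ is an open neighborhood of $W^s(\arr \Omega_j)\cap \arr f^{N_{i-1}}(M_{i-1})$ in $\arr f^{N_{i-1}}(M_{i-1})$. Since $V_j^i$ is a union of iterated $\arr f$-preimages of the open set $\tilde V_j$ intersected with the open set $V_i^i$, it is open in $\arr f^{N_i}(M_i)$. For the inclusion $W^s(\arr \Omega_j)\cap \arr f^{N_i}(M_i)\subset V_j^i$, pick $\underline x$ in the left-hand side. If $\underline x\in \arr f^{N_{i-1}}(M_{i-1})$, then $\underline x\in \tilde V_j$ directly, so $\underline x\in V_j^i$ via the $n=0$ term. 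Otherwise $\underline x\in V_i^i$; and since $\arr f^n(\underline x)\to \arr \Omega_j\subset int\, M_j\subset int\, M_{i-1}$ as $n\to+\infty$, I would take the least $n_0\geq 1$ with $\arr f^{n_0}(\underline x)\in \arr f^{N_{i-1}}(M_{i-1})$. Then $\arr f^{n_0}(\underline x)\in W^s(\arr \Omega_j)\cap \arr f^{N_{i-1}}(M_{i-1})\subset \tilde V_j$, while the minimality of $n_0$ together with the forward invariance $\arr f(\arr f^{N_i}(M_i))\subset \arr f^{N_i}(M_i)$ ensure $\arr f^k(\underline x)\in V_i^i$ for all $0\le k<n_0$. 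This gives $\underline x\in (\arr f|V_i^i)^{-n_0}(\tilde V_j)\subset V_j^i$.

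\textbf{Main obstacle.} The delicate point is the second sub-case of $j<i$: one must show both that the orbit of $\underline x$ enters $\tilde V_j$ in finite time and that the intermediate iterates lie in $V_i^i$, so that the preimage $(\arr f|V_i^i)^{-n_0}(\tilde V_j)$ genuinely captures $\underline x$. Selecting $n_0$ as the first entry time into $\arr f^{N_{i-1}}(M_{i-1})$ addresses both issues simultaneously, thanks to the fact that $\arr f^{N_{i-1}}(M_{i-1})$ is itself forward invariant, so the orbit cannot leave and re-enter $V_i^i$.
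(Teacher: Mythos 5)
Your $j=i$ case is fine, and it matches the paper's "obvious" (the paper simply doesn't spell it out). The gap is in the $j<i$ case: the claim that $\tilde V_j$ is open is false. By definition $\tilde V_j = V_j^{i-1}\cap\bigcup_{l=j}^{i-1}Z_l$, where each $Z_l=\arr f^{N_l}(M_l)\setminus \operatorname{int}\arr f^{N_{l-1}+1}(M_{l-1})$ is a compact (hence closed, not open) set, and $V_j^{i-1}$ is itself only asserted to be a \emph{neighborhood}, not an open set. The paper nowhere claims $\tilde V_j$ is open; it claims only that $\tilde V_j$ is a neighborhood. Consequently your deduction that $V_j^i=\bigcup_{n\ge 0}(\arr f|V_i^i)^{-n}(\tilde V_j)$ is open does not go through, and since "neighborhood" is strictly stronger than "contains the set," your inclusion argument $W^s(\arr\Omega_j)\cap\arr f^{N_i}(M_i)\subset V_j^i$ alone is not sufficient to conclude.

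To repair this you would have to do what the paper does: fix $\underline x\in W^s(\arr\Omega_j)\cap\arr f^{N_i}(M_i)$, choose $n$ with $\arr f^n(\underline x)\in\operatorname{int}\arr f^{N_{i-1}}(M_{i-1})$, and then argue directly that every $\underline y\in\arr f^{N_i}(M_i)$ sufficiently close to $\underline x$ lies in $V_j^i$. The point is that the first entry time $m$ of $\underline y$ into $\arr f^{N_{i-1}}(M_{i-1})$ depends on $\underline y$ and may differ from that of $\underline x$; one then uses that $m\le n$, that $\arr f^m(\underline y)$ is close to $\arr f^m(\underline x)\in W^s(\arr\Omega_j)$, and that $\tilde V_j$ is a \emph{neighborhood} of $W^s(\arr\Omega_j)\cap\arr f^{N_{i-1}}(M_{i-1})$ in $\arr f^{N_{i-1}}(M_{i-1})$, to place $\arr f^m(\underline y)$ in $\tilde V_j$. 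Your minimality-plus-forward-invariance observation (the part you flagged as the "main obstacle") is correct and is indeed used in the paper, but it was never the difficulty — the missing step is the passage from $\underline x$ to nearby $\underline y$.
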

\begin{proof}
As the case $i=j$ is obvious, we suppose $j<i$. For every $\underline x \in f^{N_i}(M_i)\cap W^s(\arr \Omega_j)$ there exists $n $ such that $\arr f^n(\underline x)\in  int(f^{N_{i-1}}(M_{i-1}))$. Consequently, for every $\underline  y\in \arr f^{N_i}(M_i)$ nearby $\underline  x$, there exists $m$ such that 
$\arr f^m(\underline y)\in \arr f^{N_{i-1}}(M_{i-1})$.
 Let us consider such an $m$ minimal. 
 Since $\tilde V_j $ is a neighborhood of $W^s(\arr \Omega_j)\cap f^{N_{i-1}}(M_{i-1})$ in $f^{N_{i-1}}(M_{i-1})$, the point $\arr f^m(\underline y)$ belongs to $\tilde V_j$ for $\underline y$ sufficiently close to $\underline x$ . 
Also for every $k<m$ the point $\arr f^k(\underline y)$ belongs to the complement of $f^{N_{i-1}}(M_{i-1})$. On the other hand,  $\arr f^k(\underline y)$ belongs to $ f^{N_i}(M_i)$ for every $k$. Thus for every $k<m$, the point $\arr f^k(\underline y)$ belongs to $V_i ^i=\arr f^{N_i}(M_i)\setminus \arr f^{N_{i-1}}(M_{i-1})$. As $\arr f^m(\underline y)$ belongs to $\tilde V_j$, it follows that  $\underline y$ belongs to $V_j^i$.\end{proof}

We extend $E_{j}^s$ on $V_j^i$ by:
\[\forall x\in V_j^i, \forall n\ge 0 \text{ minimal such that } \arr f^n(\underline x) \in \tilde V_j,\;  E_{j\underline x} ^s = {F_{\underline x}^\delta}^{-n}(E_{j  \arr f^n(\underline x)}^s).\]
Induction hypotheses $(i)$, $(iii)$ and $(iv)$ for $j<i$ imply properties $(i)$, $(iii)$  and $(iv)$ for $E_{j}^s$ on $V_j ^{i}$. 


Let us check property $(ii)$. As this property is invariant by pull back, property $(i)$ implies that property $(ii)$ holds for every $k\le j$ both less than $i$. Let $\underline x\in V_i ^i\cap V_j ^i$. Let $n$ be such that $\underline x$ belongs to $\arr f^{-n}(\tilde V_j )\cap V_i^i$. We recall that $V_i ^i=\arr f^{N_i}(M_i)\setminus \arr f^{N_{i-1}}(M_{i-1})$, hence $\arr f^n (\underline x)$ belongs to $\arr f^{N_i+n} (M_i)\subset \arr f^{N_i}(M_i)$. Also $\arr f^n (\underline x)$ belongs to $\tilde V_j \subset V_j ^{i-1}\subset f^{N_{i-1}}(M_{i-1}^c)$. Thus $\arr f^n(\underline x)\in \tilde V_j \cap V_i^i$. Consequently, Property $(ii)$ holds at $\arr f^n (\underline x)$. 
By pull back invariance and property $(i)$, Property $(ii)$ holds at $\underline x$.

\paragraph{Proof of Lemma \ref{tildeE}}
We are going to project $E^\prime$ onto each $E^s_{j}$, $j\le i$. The following is a consequence of the Lambda-lemma and the strong transversality condition.

\begin{claim}\label{claim71}
For $Z_i$ small enough (that is $-N_{i-1}$ large enough), for $\delta$ small enough, for every $j<i$, every $\underline x\in W^s(\arr \Omega_j)\cap Z_i$, if $q_j(\underline x)$ denotes the orthogonal projection of $\mathbb R^N$ onto $E^s_{\underline x}$, the distance between $q_j(\underline x)(E^\prime_{\underline x})$ and $E^\prime_{\underline x}$ is less than $\eta/4i$.\end{claim}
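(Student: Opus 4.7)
My plan is to establish that $d_G(E'_{\underline x},E^s_{\underline x})<\eta/4i$ uniformly on $W^s(\arr\Omega_j)\cap Z_i$: such closeness immediately implies that the orthogonal projection $q_j(\underline x)$ onto $E^s_{\underline x}$ moves $E'_{\underline x}$ by only the same order in $G_N$, yielding the claim. To prove this closeness, I would exploit both the construction of $E'$ as a smooth approximation of a continuous extension of $E^s|K_i$ and the structure of the stable set $W^s(\arr\Omega_j)$ via the $\lambda$-lemma.

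The key step is to choose the continuous extension of $E^s|K_i$ carefully. The stable distribution $E^s_{\underline x}=\ker p_{x_0}\oplus T_{x_0}W^s(x_0,f)$ is continuous on each individual stable stratum $W^s(\arr\Omega_k)$, and I want it to be continuous on the union $K_i\cup(W^s(\arr\Omega_j)\cap Z_i)$. This requires compatibility at accumulation points of $K_i\subset W^s(\arr\Omega_i)$ lying in $W^s(\arr\Omega_j)$; the $\lambda$-lemma provides exactly this, namely that the stable tangent direction at such an accumulation point equals the $C^0$-limit of the stable directions along nearby points of $K_i$. Once continuity is established on $K_i\cup(W^s(\arr\Omega_j)\cap Z_i)$, a Tietze-type extension produces a continuous plane field on a neighborhood of $K_i$ that matches $E^s$ on this larger set, and the smoothing provided by Corollary~\ref{approximation general} then yields $E'$ within $\eta/4i$ of $E^s$ on $W^s(\arr\Omega_j)\cap Z_i$. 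The role of the hypothesis $-N_{i-1}$ large is to force $W^s(\arr\Omega_j)\cap Z_i$ to lie in a compact subset on which the $\lambda$-lemma convergence is uniform: each $\underline x$ there must wait at least $|N_{i-1}|$ iterations before entering $\mathrm{int}(M_{i-1})$, and strong transversality keeps the orbit during this wait in a compact region away from the critical set.

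The main obstacle is the $\lambda$-lemma step in the endomorphism setting. Because $F^\delta$ is not uniformly invertible as $\delta\to 0$ (due to the presence of critical points of $f$), the classical inclination lemma requires adaptation. The strong transversality condition \eqref{tune}, which ensures $E^s_{\arr f(\underline x)}+Tf(T_{x_0}M)=\mathbb R^N$, is precisely what allows one to transfer continuity of the stable distribution across the critical set: it guarantees that even where $Tf$ fails to be injective, the stable direction varies continuously as $\underline x$ crosses from the stratum $W^s(\arr\Omega_i)$ into $W^s(\arr\Omega_j)$, thereby completing the argument and controlling $d_G(E'_{\underline x},E^s_{\underline x})$ uniformly in $\delta$ small.
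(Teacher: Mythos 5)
Your proposal reduces the claim to the assertion that $d_G(E'_{\underline x},E^s_{\underline x})<\eta/4i$ uniformly on $W^s(\arr\Omega_j)\cap Z_i$, but this target is in general impossible to achieve, not merely hard. The stable bundle $E^s$ has in general \emph{different dimensions} on different strata: by the strong transversality condition, $\arr\Omega_i\succ\arr\Omega_j$ forces $\dim W^s(\arr\Omega_j)\ge\dim W^s(\arr\Omega_i)$, and a strict inequality is typical. Since $E'$ is constructed as an approximation of $E^s|K_i$ (with $K_i\subset W^s(\arr\Omega_i)$), its fibers have the dimension of $E^s$ on the $i$-th stratum, while $E^s_{\underline x}$ for $\underline x\in W^s(\arr\Omega_j)$ has possibly larger dimension. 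In the metric $d_G(P,P')=\|\pi_P-\pi_{P'}\|$, planes of unequal dimension are at distance $\ge 1$, so your closeness estimate cannot hold; likewise, your plan to extend $E^s$ continuously to the union $K_i\cup(W^s(\arr\Omega_j)\cap Z_i)$ cannot produce a continuous $G_N$-valued section, because $G_N$ is disconnected across dimensions.

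The paper's argument deliberately avoids this: it uses the $\lambda$-lemma and strong transversality to show that for $\underline x_n\in W^s(\arr\Omega_j)$ converging to $\underline x\in K_i$, every accumulation plane of $(E^s_{\underline x_n})_n$ \emph{contains} $E^s_{\underline x}$ rather than equals it. Near-inclusion of $E'$ into $E^s|_{W^s(\arr\Omega_j)}$, not near-equality, is exactly what makes the orthogonal projection $q_j(\underline x)$ move $E'_{\underline x}$ by only $\eta/4i$. Your remarks about using strong transversality to control the critical set and about shrinking $Z_i$ via $-N_{i-1}$ large are in the right spirit, but they are put in service of the wrong conclusion; the argument needs to be reoriented from ``$E'$ close to $E^s$'' to ``$E'$ almost contained in $E^s$''.
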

\begin{proof} 
By the strong transversality condition, on $W^u_\epsilon(\arr \Omega_i)\setminus \arr \Omega_i$ the stable direction $E^s$ is transverse to  $TW^u_\epsilon(\arr \Omega_i)$. This is true in particular on $W^u_\epsilon(\arr \Omega_i)\cap W^s(\arr \Omega_j)$. By hyperbolicity and  the strong transversality condition, 
for every $(\underline x_n)_n$ in $W^u_\epsilon(\arr \Omega_i)\cap W^s(\arr \Omega_j)$ approaching $\underline x \in \Omega_i$, 
every accumulation plane $P$ of $(E^s_{\underline x_n})_n$ contains the plane $E^s_{\underline x}$. This implies
that for every $(\underline x_n)_n$ in $W^s(\arr \Omega_j)$ approaching $\underline x \in K_i= \arr f ^{N_i}(M_i)\cap W^s(\arr \Omega_i)$, 
every accumulation plane $P$ of $(E^s_{\underline x_n})_n$ contains the plane $E^s_{\underline x}$.
The claim follows since $E^\prime$ is close to $E^s|K_i$ for $\delta$ small and $Z_i$ small.
\end{proof}

We will perform orthogonal projections of $E^s_{i}$ on compact subsets of each $Z_i \cap W^s(\arr \Omega_j)$.

 Let us implement these compact subsets.

First let us notice that $cl(Z_i \setminus \arr f^{-1}(Z_i))$ is a compact subset of $\cup_{j< i} W^s(\Omega_j)$:
\[Z_i \setminus \arr f^{-1}(Z_i)=
\arr f^{N_i}(M_i)\setminus int\, \arr f^{N_{i-1}+1}(M_{i-1}) \setminus \big(
\arr f^{N_i-1}(M_i)\setminus int\, \arr f^{N_{i-1}}(M_{i-1})
\big).\]

 As $\arr f^{N_i}(M_i)$ is included in $\arr f^{N_i-1}(M_i)$ it comes:
 \[Z_i \setminus \arr f^{-1}(Z_i)= 
   \arr f^{N_i}(M_i)\cap int\, \arr f^{N_{i-1}}(M_{i-1})\setminus int\, \arr f^{N_{i-1}+1}(M_{i-1})\subset \cup_{j< i } W^s(\arr \Omega_j).\]

Let $(V_k^{i-1})_{k\le i-1}$ be the neighborhoods given by the induction hypothesis at step $i-1$ for the integer $N_{i-1}$ defined above. 

By decreasing induction we construct $(N_k)_{k=1}^{i-2}\in \mathbb Z^{-}$ such that, the following holds.
\begin{claim}\label{claim72}
For every $k\le i-1$, the set $Z_{k}:= \arr f^{N_k}(M_{k})\setminus \arr f^{N_{k-1}+1}(M_{k-1})$ has its closure included in the interior of $V_{k}^{i-1}$. Moreover, 
for $\delta$ small enough, the distance  between the orthogonal projection $p_{k}$ onto $E^s_{k}$ satisfies:
\begin{equation}\label{distproj} 
\|p_{k}(\underline x)(E^\prime_{\underline x}) - E^\prime_{\underline x}\|\le \eta/3i, \quad \forall \underline x\in Z_{k}.\end{equation},
\end{claim}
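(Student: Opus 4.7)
The plan is to construct the sequence $(N_k)_{k=1}^{i-2} \in \mathbb{Z}^-$ by decreasing induction on $k$: fix $N_{i-2}$ first, then $N_{i-3}$, and so on down to $N_1$. At each step, having already chosen $N_{i-1}, N_{i-2}, \ldots, N_k$, I would select $N_{k-1}$ with $|N_{k-1}|$ large enough so that the set $Z_k$ (now fully specified) satisfies both conclusions of the claim. The final step also handles $Z_1 = \arr f^{N_1}(M_1)$, with the convention $M_0 = \varnothing$.

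For the containment $\mathrm{cl}(Z_k) \subset \mathrm{int}\,V_k^{i-1}$, the key geometric input is that as $|N_{k-1}| \to \infty$, the subtracted set $\arr f^{N_{k-1}+1}(M_{k-1})$ grows to exhaust $\bigcup_{j<k} W^s(\arr \Omega_j) \cap \arr f^{N_k}(M_k)$, by the filtration property combined with the spectral decomposition (every orbit in $\arr f^{N_k}(M_k)$ is asymptotic to exactly one $\arr \Omega_j$ with $j \le k$). Hence $Z_k$ shrinks into an arbitrarily small neighborhood of the compact set $W^s(\arr \Omega_k) \cap \arr f^{N_k}(M_k)$, itself contained in $W^s(\arr \Omega_k) \cap \arr f^{N_{i-1}}(M_{i-1})$. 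Since $V_k^{i-1}$ is an open neighborhood of this latter set by the outer induction hypothesis of step $i-1$, the inclusion follows for $|N_{k-1}|$ large.

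For the projection estimate, I would upgrade Claim 7.1 as follows. That claim already provides the sharper bound $\eta/4i$ for the analogous distance, but with respect to the orthogonal projection $q_k$ onto the \emph{true} stable direction $E^s_{\underline x}$, at points $\underline x \in W^s(\arr \Omega_k) \cap Z_i$. To pass from $q_k$ to $p_k$ (projection onto the constructed $E^s_k$), the outer induction hypothesis (iii) asserts that $E^s_k$ is compact-open close to $E^s|W^s(\arr \Omega_k)$ for $\delta$ small; hence the Grassmannian distance $d_G(E^s_{k\underline x}, E^s_{\underline x})$ can be made as small as desired on the (shrunk) compact set $Z_k$. A triangle inequality then bounds $\|p_k - q_k\|$ by a multiple of this Grassmannian distance, and continuity propagates the estimate from $Z_k \cap W^s(\arr \Omega_k)$ to all of $Z_k$, yielding the target bound $\eta/3i$.

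The principal obstacle I expect is the delicate interplay of the nested parameter choices: each $|N_{k-1}|$ must be chosen large enough relative to the already-fixed $N_k$, and $\delta$ must simultaneously be small enough for every $k$. Verifying that all these constraints can be met in the prescribed decreasing order is routine but tedious. A secondary subtlety is that Claim 7.1 is stated for points in $Z_i$ while the set $Z_k$ may not be contained in $Z_i$; this can be addressed either by running a parallel argument directly on $Z_k$, or by extending the approximation $E'$ from $Z_i$ to the relevant larger domain using the convolution technique of Section 6.
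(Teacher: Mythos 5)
Your proof follows the same route as the paper's: a decreasing induction on $k$ choosing $N_{k-1}$ very negative so that $Z_k$ collapses onto a small neighborhood of $W^s(\arr\Omega_k)\cap\arr f^{N_k}(M_k)$, combined with Claim~\ref{claim71} and the fact that the constructed bundle $E^s_k$ is close to the genuine $E^s$ to obtain \eqref{distproj}. One point where you are more precise than the paper's one-line proof: the paper justifies the passage from the ``true'' projection $q_k$ of Claim~\ref{claim71} to the ``constructed'' projection $p_k$ by invoking Remark~\ref{defproche} (which records the $d_\infty$-local-Lipschitz continuity from Property~$(iv)$), but as you observe the crucial ingredient is really the compact-open closeness of $E^s_k$ to $E^s|W^s(\arr\Omega_k)$, i.e.\ Property~$(iii)$ and Remark~\ref{defdistancesurensemblesdifferents}, since the Lipschitz constant $L^\delta_C$ of Property~$(iv)$ is not uniform in $\delta$; both ingredients are ultimately used, and your emphasis is the right one. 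You also correctly flag two delicate points that the paper passes over in silence: the iterated dependence of the parameter choices ($N_{k-1}$ on $N_k$, then $\delta$ on all of them), and the fact that Claim~\ref{claim71} is stated on $W^s(\arr\Omega_j)\cap Z_i$ whereas the estimate is asserted on $Z_k$, where $E'$ must therefore be defined --- which is indeed resolved by taking $E'$ as a smooth approximation of a $d_1$-continuous extension of $E^s|K_i$ to a large enough neighborhood, exactly as you suggest.
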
\label{claimdist}
\begin{proof} For $k\le i-1$, suppose $N_k$ constructed. Then for $-N_{k-1}$ large, the set $Z_k$ is close to the compact set $W^s (\arr \Omega_k)\cap  f^{N_k}(M_k)$, and so it is included in $V_k$. Moreover, by remark \ref{defproche} and Claim \ref{claim71} for $-N_{k-1}$ large and $\delta$ small, inequality (\ref{distproj}) holds.\end{proof}

Let $\hat Z_k$ be a neighborhood of $Z_k $ in $V_k^{i-1}$ such that for all $\underline x\in \hat Z_k $, 
\[\|p_k (\underline x) (E^\prime _{\underline x}) -E^\prime _{\underline x}\|\le \frac{\eta}{2 i}.\]
By corollary \ref{partition1}, there exists a dump function
$\rho\in Mor_0^\infty (\arr f^{N_{i-1}}(M_{i-1}), [0,1])$ equal to $1$ on $Z_k$ and to $0$ on $\hat Z_k ^c$. We construct $(P_{\underline x}^j)_{j<i}$ by induction. Put $P^0= E^\prime$, and for $j \in [1, i-1]$ put
\[P_{\underline x} = \{\rho_j (\underline x) \cdot q_j (\underline x) (u) +(1-\rho_j(\underline x)) \cdot u : \; u \in P_{\underline x}^{j-1}\}\]
Let $\hat E = P_{\underline x} ^{i-1}$. By induction hypothesis $(ii)$, for every $\underline x \in Z_i \cap Z_j$, it holds $\tilde E_{\underline x}\subset E^s_{j\underline x}$.

%
%
%
%
%

By definition of $\hat Z_k $, the section $ \tilde E$ is in $B_{C^0}(E^\prime|Z_i,\eta/2)$ and is $d_\infty$-Lipschitz. 

\section{Proof Proposition \ref{fondapropu}}\label{princiu}
The proof of Proposition \ref{fondapropu} is done by decreasing induction on $q'\in [1,q]$. We recall that 
Proposition \ref{fondaprops} constructed sections $(E_{j}^s)_j$ on neighborhoods $(V_j)_j$ of respectively $(W^s_j (\arr \Omega_j))_j$, which satisfy properties $(i)$-$(ii)$-$(iii)$ and $(iv)$. Here is the induction hypothesis.

For every $q'\le q$, there exist $K>0$ and an open cover
  $(W_i)_{i=q'}^{q}$ of $\cup_{j\ge q'} W^s (\arr \Omega_j)$, where each $W_i$ is a neighborhood
  of $\arr \Omega_i$ included in $V_i^i$ and  such that for every $\delta>0$, there exists a function $E_i^u\in C^0(W_i, G_N)$ satisfying the following properties for $i\in [q',q]$:
  \begin{enumerate}[$(i)$]
  \item For every $\underline x\in W_i\cap \arr f^{-1}(W_i)$, the map $F^\delta$ sends $E_{i\underline x}^u$ into $E_{i\arr f(\underline x)}^u$.
  \item For every $j\ge i$, for every $\underline x\in W_i\cap\arr f^{-1} (W_j)$ the following inclusion holds:
\[E_{i{\underline x}}^u\subset E_{j\arr f({\underline x})}^u.\]
  \item $E^s_{i\underline x}\oplus E^u_{i\underline x}=\R^N$, for every $\underbar x\in W_i$;
  the angle between $E_{i}^s$ and $E_{i}^u$ is bounded from below by $K^{-1}$.
  \item The subbundle $E_{i}^u$ is of constant dimension, $d_1$-continuous and $d_\infty$-Lipschitz.
  \item For  any $\underline x\in W_i$, it holds
    \begin{displaymath}
      \|{F^\delta}(v^u)\|\ge \|v^u\|/K, \quad\forall v^u\in
      E^u_{i\underline x}. 
    \end{displaymath}
    \item It holds $ cl(\arr f^{-1}(\cup_{q'}^qW_i) )\subset \cup_{q'}^q W_i$. Moreover, for every $j\ge i$, if $\underline x\in W_{j}$ then $\arr f(\underline x)\notin \cup_{k>j} W_k$ and $\cap_{n\in \mathbb Z} \arr f^n(W_{j})=\arr \Omega_{j}$.
  \end{enumerate}
  
We continue to denote by $(M_j)_{j=1}^q$ a filtration adapted to $(\arr
\Omega_j)_{i=1}^q$ (see \textsection  \ref{filtration} for details and fig. \ref{filtrationpic}).   

At each step $q'$ of the induction we will work with a small $\eta$ and we will suppose an integer $-N_{q'}$ large and $\delta$ small both depending on $\eta$. 
\paragraph{Step $q'= q$}
The subset $\arr \Omega_q=W^s(\arr \Omega_q)$ is compact. Moreover there exists an arbitrarily small compact neighborhood $W_q$ of $\arr \Omega_q$ which satisfies $(vi)$ for $i=q$.
Indeed,  consider $W_q$ of the form $\arr M_f \setminus \arr f^{-N}( M_{q-1})$. Hence we can suppose that $W_q$ is included in $V_q$. 

Let $\eta>0$ be small, in particular smaller than the angle between $E^s|\Omega(\arr f)$ and $E^u|\Omega(\arr f)$.

Let $\underline x\mapsto E^\prime_{\underline x}$ be the restriction to $W_q$ of a smooth approximation of a continuous extension of the continuous map $E^u|\arr \Omega_q\colon \arr \Omega_q\to G_N$ given by Corollary \ref{approximation general}. This means that 
on the one hand,   $E^\prime$ is $d_1$-continuous and $d_{\infty}$-Lipschitz, and that for every $\epsilon$ small, if $W_q$ is sufficiently small then for every $\underline x\in W_q$ there exists $\underline y\in \arr \Omega_q$ $\epsilon$-close to $\underline x$ such that the distance between $E^\prime_{\underline x}$ and $E^u_{\underline y}$ is $\eta$ small.  

By hyperbolicity of $\arr \Omega_q$, the angle between $E^s_{\underline y}$ and $E^u_{\underline y}$ is uniformly bounded from below on $\underline y\in \arr \Omega_q$  and $Tf|E^u_{\underline y}$ is bijective. By property $(iii)$ of Proposition \ref{fondaprops} and remark \ref{defdistancesurensemblesdifferents}, there exists $K$ large such that for every $\eta>0$ small, for every $W_q$  sufficiently small, for all $\delta\ge 0$ small, and for every  $\underline y\in W_q$ the following holds:
\begin{itemize}
\item[$(a)$]  the angle between $E^s_{q \underline y}$ and $E^\prime_{\underline y}$ is greater than $K^{-1}$, 
\item[$(b)$] for every plane $P$ making an angle  with  $E^\prime_{\underline y}$ smaller than $\eta$, it holds:
\[\forall u\in P, \quad \|F^\delta(u)\|\ge \|u\|/K.\]
\end{itemize}
Indeed, for $\underline x\in \arr \Omega_q$, every vector $u$ in $E^u$ is expanded by $F^0$. 

We can now proceed as in the step $i=1$ of the proof of Proposition \ref{fondaprops}.
 
Since for every $\delta>0$, the map $F^\delta$ is bijective,  
 the following is well defined   
  \begin{displaymath}
    F_\#:=\bar B_{C^0}(E^\prime,\eta )\ni P\mapsto \left[\underline x\mapsto
      F_{x_0}^{\delta} (P_{\arr f^{-1}(\underline x)})\right]\in C^0(W_q, G_N). 
  \end{displaymath}
Moreover, for $\delta$, $W_q$ small enough and $E^\prime$ close enough to $E^u$, 
 there exists some $k\in\N$ such that $F_\#^k$ is contracting and sends the closed ball $\bar B_{C^0}(E^\prime,\eta)$ into itself.  

  Let $E_{q}^s$ be the unique fixed point of $F_\#$ in $B_{C^0}(E^\prime,\eta)$. In this way,
  condition $(i)$ is clearly satisfied. 

  Properties $(iii)$ and $(v)$ follow from respectively Properties $(a)$ and $(b)$ above. Property $(ii)$ is empty.

  To prove property $(iv)$, we proceed as in the proof of Proposition \ref{fondaprops}, step $i=1$. 
\begin{flushright}
$\square$
\end{flushright}

 \paragraph{Step $q'+1\rightarrow q'$.} 
Let us suppose the neighborhoods $(W_i)_{i=q'+1}^q$ constructed so that 
\begin{itemize}
\item property $(vi)$ holds,
\item $W_i$ is a neighborhood of $\arr \Omega_i$,
\end{itemize}

Let us proceed again as in the proof of Proposition \ref{fondaprops} step $i-1\rightarrow i$.

We remark that $C_{q'}:=W^s(\arr \Omega_{q'})\setminus \arr f^{-2}(O_{q'})$ is compact, with $O_{q'} :=\cup_{i=q'+1}^q W_i$. Moreover for every $N_{q'}\le 0$, the following is a compact set containing $C_{q'}$:
\[Y_{q'}:=\arr f^{N_{q'}}(M_{q'-1}^c)\setminus \arr f^{-2}(O_{q'}),\]
Moreover, when  $-N_{q'}$ is large, $Y_{q'}$ is close to $C_{q'}$ for the Hausdorff metric. By $Y_{q'}$ \emph{small} we mean $-N_{q'}$ large.
 
First, we assume $-N_{q'}$ large enough so that the set $Y_{q'}$ is included in $V_{q'}$. 

By strong transversality and property $(iii)$ of Proposition \ref{fondaprops}, for every $\eta>0$, there exists $K$ large such that for every $\delta$ and  $Y_{q'}$ small, it holds:\begin{equation}\label{transversilty3}
 \forall \underline x\in Y_{q'},\quad \forall u\in \R^N\setminus \{0\}: \quad |\angle(u,E_{q' \underline x}^s)|>\eta\Rightarrow  \|F^\delta(u)\|\ge \|u\|/K.\end{equation}
Indeed, if $\underline x\in C_{q'}$, a unit vector $u$ making an angle at least $\eta$ with $E_{\underline x}^s$ has
its image by $F^0$ not in $E_{\arr f(\underline x)}^s$, by the strong tranversality condition. Hence the norm if its image is bounded from below by a certain $1/2K$. Consequently for $\delta$ and $Y_{q'}$ small inequality (\ref{transversilty3}) holds.

For $\eta>0$, let $U_\eta$ be the closed subset of $C^0(Y_{q'}, G_N)$ made by sections $P$ such that for every $\underline x\in Y_{ q'}$ the angle between $P_{\underline x}$ and  $E_{q' \underline x}^s$ is at least $\eta$.

For all $\eta$, $\delta$ and $Y_{q'}$, the following is well defined with image in $C^0(Y_{q'} \cap \arr f(Y_{q'}), G_N)$:  
  \begin{displaymath}
    F_\#:=U_\eta \ni P\mapsto \left[\underline x\in Y_{q'} \cap \arr f(Y_{q'})\mapsto
      F_{x_0}^{\delta} (P_{\arr f^{-1}(\underline x)})\right]. 
  \end{displaymath}

Similarly, for every $k\ge 0$,  for all $\eta$, $\delta$ and $Y_{q'}$, the following is well defined with image in $C^0(\cap^k_{i=0} \arr f^i(Y_{q'}), G_N)$:  
  \begin{displaymath}
    F_\#^k:=U_\eta \ni P\mapsto \left[\underline x\in \cap^k_{i=0} \arr f^i(Y_{q'})\mapsto
      {F_{x_0}^{\delta}}^k (P_{\arr f^{-k}(\underline x)})\right]. 
  \end{displaymath}
We remark that $\cap^k_{i=0} \arr f^i(Y_{q'})=f^{N_{q'}}( M_{q'-1}^c)\setminus \arr f^{-2+k}(O_{q'})$ is close to  $\arr\Omega_{q'}$, when  $k$ is large and $Y_{q'}$ is small (that is $-N_{q'}$ large). 
We assume  $\eta>0$ smaller than the angle between $E^s|\Omega_{q'}$ and $E^u|\Omega_{q'}$.
Hence by hyperbolicity, for $Y_{q'}$ and $\delta$ sufficiently small, there exists $k$ such that $F_\#^k$ is contracting for the $C^0$-metric. Note that $k$ does not depend on $\eta$.
Moreover, by hyperbolicity, if $k$ is large enough and $\delta$ small enough, for every $\underline x  \in \arr f^{k}(C_{q'})$, for every $P_{\underline x}$ $\eta$-close to $E_{q'\underline x}^s$, the plane ${F^\delta_{\underline x}}^{-k}(P_{\underline x})$ is $\eta$-close to  $E^s_{\arr f^{-k}(\underline x)}$. Hence every $\underline x\in C_p$, every $P$ making an angle greater than $\eta$ with $E^s_{\underline x}$, the plane ${F_{\underline x}^\delta }^k(P_{\underline x})$ makes an angle greater than $\eta$ with $E_{q'\arr f^{k}(\underline x)}^s$. 

Consequently, for $-N_{q'}$ large enough, $F_\#^k $ takes its values in the subspace of $C^0(\cap _{i=0}^k f^i(Y_{q'}), G_N)$ formed by sections $P$ such that for every $\underline x \in \cap _{i=0}^k f^i(Y_{q'})$, $P_{\underline x}$ makes an angle with $E^s_{q'\underline x}$ greater than $\eta>0$. 
 
However, the target space of $F_\#$ is not the same as the source space. So we cannot conclude to a fixed point. We are going to complement the sections in the image of $F_\#$ by sections obtained by the following lemma shown below:
\begin{lemma}\label{tildeEu} 
For $\delta$ and $Y_{q'}$ small enough, there exists a $d_1$-continuous and $d_\infty$-Lipschitz section $\tilde E\in U_\eta$ such that for every $j>q'$:

\[\underline x\in Y_{q'}\cap \arr f(W_j),\quad  \tilde E_{\underline x} \subset F^{\delta}( E_{ j \arr f^{-1}(\underline x)}^u).\]

\end{lemma}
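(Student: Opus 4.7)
I plan to construct $\tilde E$ in parallel with the stable-side construction of Lemma \ref{tildeE}, using orthogonal projections onto the push-forward plane fields $F_j(\underline x) := F^\delta(E^u_{j\arr f^{-1}(\underline x)})$, defined on $\arr f(W_j)$ for $j>q'$, in place of projections onto the stable bundles $E^s_j$. First, via Corollary \ref{approximation general}, I would pick a smooth approximation $E'$ on $Y_{q'}$ of a continuous extension of $E^u|\arr \Omega_{q'}$, yielding a $d_1$-continuous and $d_\infty$-Lipschitz section. By the hyperbolicity of $\arr \Omega_{q'}$ together with Proposition \ref{fondaprops}$(iii)$, for $Y_{q'}$ sufficiently small (\ie $-N_{q'}$ large) and $\delta$ small, $E'$ makes angle at least $2\eta$ with $E^s_{q'}$ at every point of $Y_{q'}$, so $E'\in U_{2\eta}$.

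The key step is the following geometric claim, analogous to Claim \ref{claim71}: for every $j>q'$ and every $\underline x\in W^s(\arr \Omega_{q'})\cap \arr f(W_j)$ sufficiently close to $\arr \Omega_{q'}$, the plane $F_j(\underline x)$ approximately contains $E^u_{q'\underline x}$. I would prove this using the strong transversality condition combined with the Lambda-lemma: along any sequence $\underline x_n\to \underline x_\infty\in \arr \Omega_{q'}$ inside the heteroclinic region $W^u(\arr \Omega_j)\cap W^s(\arr \Omega_{q'})$, any accumulation plane of $F_j(\underline x_n)$ must contain $E^u_{q'\underline x_\infty}$. Consequently, for $\delta$ and $Y_{q'}$ small, the orthogonal projection $p_j(\underline x)$ of $\R^N$ onto $F_j(\underline x)$ moves $E'_{\underline x}$ by at most $\eta/(3q)$ in the Grassmannian on a compact neighborhood $\hat Z_j$ of $Z_j:=\overline{Y_{q'}\cap \arr f(W_j)}$ inside $\arr f(W_j)$.

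After (if necessary) shrinking the $W_j$'s ($j>q'$) so that the $\hat Z_j$'s are pairwise disjoint, I would apply Corollary \ref{partition1} to obtain bump functions $\rho_j\in \mathrm{Mor}_0^\infty(\arr M_f,[0,1])$ equal to $1$ on $Z_j$ and vanishing outside $\hat Z_j$. Setting $P^0:=E'$ and iteratively
\begin{displaymath}
P^j_{\underline x}:=\{\rho_j(\underline x)\,p_j(\underline x)(u)+(1-\rho_j(\underline x))\,u: u\in P^{j-1}_{\underline x}\},\quad j=q'+1,\ldots,q,
\end{displaymath}
where each $p_j$ is extended by the identity outside $\arr f(W_j)$, I would set $\tilde E:=P^q$. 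Then $\tilde E$ is $d_1$-continuous and $d_\infty$-Lipschitz by construction. On $Z_j$, where $\rho_j=1$ and $\rho_k=0$ for $k\neq j$, one has $\tilde E_{\underline x}=p_j(E'_{\underline x})\subset F_j(\underline x)$, giving the required inclusion on $Y_{q'}\cap \arr f(W_j)\subset Z_j$. Since the total perturbation of $E'$ is at most $q\cdot \eta/(3q)=\eta/3$, $\tilde E$ still makes angle at least $\eta$ with $E^s_{q'}$, so $\tilde E\in U_\eta$.

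The main obstacle is the geometric claim of the second paragraph. In the diffeomorphism case it is a routine consequence of the Lambda-lemma applied to heteroclinic orbits from $\arr \Omega_j$ to $\arr \Omega_{q'}$. Here, one must transport this limiting behaviour through the push-forward by $F^\delta$ uniformly in small $\delta>0$, and control critical points that may lie near $\arr f^{-1}(\underline x)$ when $\underline x$ is close to the heteroclinic limit --- it is precisely the strong transversality assumption that prevents critical directions in $\ker p_{x_{-1}}$ from contaminating $F_j(\underline x_\infty)$, ensuring that the projection step preserves proximity to $E^u_{q'}$.
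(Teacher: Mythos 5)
The overall blueprint — build a Lipschitz base section, then bend it onto the push-forward planes $F^\delta\bigl(E^u_{j\,\arr f^{-1}(\cdot)}\bigr)$ using $\mathrm{Mor}^\infty_0$ bump functions from Corollary~\ref{partition1} — is the same as the paper's. But there is a genuine gap in the step where you ``shrink the $W_j$'s ($j>q'$) so that the $\hat Z_j$'s are pairwise disjoint.'' This disjointness cannot in general be arranged. The family $(W_j)_{j\geq q'+1}$ is an \emph{open cover} of $\bigcup_{j>q'}W^s(\arr\Omega_j)$, and each $W^s(\arr\Omega_j)$ has to be covered by several of these sets; near a heteroclinic chain $\arr\Omega_k\succ\arr\Omega_j\succ\arr\Omega_{q'}$ with $k>j>q'$ the sets $W_j$ and $W_k$ (hence $\arr f(W_j)$ and $\arr f(W_k)$) must overlap over a region that meets $Y_{q'}$, so the $Z_j$'s overlap no matter how the $W_j$'s are chosen subject to the covering constraint. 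Shrinking the $W_j$'s past that point would break the cover property of the inductive hypothesis. The paper does \emph{not} decompose $Y_{q'}$ into disjoint pieces: it projects \emph{iteratively}, successively replacing $\tilde E_{j}$ by $\tilde E_{j+1}$, and the key device that makes the superposition coherent on the overlaps is the nesting property $(ii)$ of the already-constructed unstable bundles, which guarantees that projecting into $F^\delta(E^u_{j+1,\cdots})$ does not destroy the inclusions $\tilde E\subset F^\delta(E^u_{i,\cdots})$ obtained at earlier indices $i\le j$. Your disjointness trick bypasses exactly the ingredient that makes the construction compatible on overlaps.

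There are two further deviations worth flagging, though they are less serious. First, the paper projects onto $F^\delta(E^u_{j\,\arr f^{-1}(\cdot)})$ \emph{parallelly to $E^s_j$}, not orthogonally. This choice is what feeds into the uniform-angle bound $(iii)$: since the stable bundles $E^s_j$ are nested (Proposition~\ref{fondaprops}$(ii)$) and each is uniformly transverse to the images one is projecting onto, projecting along $E^s_j$ keeps the result uniformly transverse to $E^s_{q'}$, and this works uniformly in small $\delta>0$. With the orthogonal projection you must additionally check that the target plane field $F^\delta(E^u_{j\cdots})$ does not collapse toward $E^s_{q'}$ as $\delta\to0$ — exactly the kind of $\delta$-sensitive behaviour the critical set introduces, and which the paper goes to some lengths to avoid (see the remark following Proposition~\ref{principal}). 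Second, the paper takes for its base section the $d_\infty$-Lipschitz orthogonal complement of $E^s_{q'}$ (already available and automatically transverse to $E^s_{q'}$), rather than re-mollifying an extension of $E^u|\arr\Omega_{q'}$; this is a minor simplification, not an obstacle. The geometric accumulation claim you invoke (strong transversality plus the inclination lemma implying $F^\delta(E^u_{j\cdots})$ ``almost contains'' the target directions near $\arr\Omega_{q'}$, uniformly in small $\delta$) is the right idea and corresponds to the paper's Claim~\ref{claim71}, but it only yields the conclusion when combined with the nesting property; on its own it does not resolve the overlap problem above.
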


\paragraph{Gluing $\tilde E$ to $    F_\# \tilde E$ and definition of $E_{i}^u$}

We remark that $\tilde E$ and $F_\# \tilde E$ are well defined on:
\[
W_{q'}:= \arr f^{N_{q'}}(int \, M_{q'-1}^c)\setminus cl(\arr f^{-1}(O_{q'}))\subset Y_{q'} \cap \arr f(Y_{q'}).
\]
We remark that induction hypothesis $(vi)$ is satisfied.

By Corollary \ref{partition1}, there exists a partition of the unity $(\rho, 1-\rho)\in Mor_0^\infty (\arr M_f)^2$ subordinated to the cover $( O_{q'}, int\, \arr f^{-1} (O_{q'})^c)$. 

 Let $p_{\underline x}$ and $p'_{\underline x}$ be the orthogonal
projections of $\mathbb R^{N}$ onto respectively $\tilde E$  and $    F_\# \tilde E$. For $\underline x\in W_{q'}$, put
\begin{displaymath}
  E^0_{\underline x}:= \{(\rho(\underline x) p_{\underline x}+(1-\rho(\underline
  x)) p'_{\underline x})(u);\; u\in 
  \tilde E_{\underline x}\}. 
\end{displaymath}

We notice that $ \underline x\in W_{q'}\mapsto E^0_{\underline x}\in G_N$ is $d_1$-continuous and $d_\infty$-Lipschitz. 
Furthermore, for $\underline x\in W_{q'} $ close to $\arr f^{-1}( O_{q'})$, the plane $E^0_{\underline x}$ is equal to $\tilde E_{\underline x}$ and for $\underline x\in W_{q'} \setminus O_{q'}$, the plane $E^0_{\underline x}$ is equal to $F_\# \tilde E_{\underline x}$.

We put
\begin{multline*}
\Pi_\eta:= \{P\in  C^0(W_{q'},G_N):\;  P=E^0\;\text{on}\; W_{q'} \setminus O_{q'}\text{
and}\\
P\text{ and } E_{q'}^s\text{ makes an angle greater than }\eta\text{ on }W_{q'} \setminus \arr f^{-2+k}(O_{q'})\}.
\end{multline*}

We put:
\begin{displaymath}
  F^\delta_\star:=P\in \Pi_\eta\mapsto \left[\underline x\in W_{q'} \mapsto
    \left\{
      \begin{array}{cl} 
        E^0_{\underline x}&\text{if }\underline x\in W_{q'}\setminus \arr f^{-1}(O_{q'})\\ 
        {F^\delta}_{\underline x} (P_{\arr f^{-1}(x)})&\text{otherwise.}
      \end{array}\right.\right] 
\end{displaymath}

We notice that the map $F^\delta_\star$ takes its values in $C^0(W_{q'}, G_N)$. Moreover, from the properties of $F^\delta_\#$, the map $F^\delta_\star$ is $\lambda$-contracting and takes its values in $\Pi_\eta$.  


Let $E_{q'}^u$ be the fixed point of ${F^\delta_\star}$. 

By definition, $E_{q'}^u$ satisfies property $(i)$. Similarly to the step $i=1$, the section $E_ i^s$ satisfies Property $(iv)$.

Also, Property $(iii)$ is satisfied for every $P\in F_\star ^{\delta ^k}\Pi_\eta$, if $-N_{q'}$ is large enough and 
$\delta$ small enough. Hence it holds for $E^u_{q'}$. Likewise by (\ref{transversilty3}), if $-N_{q'}$ is large enough and 
$\delta$ small enough, property $(v)$ holds for $E^u_{q'}$. This gives a bound on $K$. Such a bound at this step  does not depend on $\delta$ small enough.

Let us check Property $(ii)$. We only need to check that for $j>q'$, for $\underline x\in \arr f^{-1}(W_{q'})\cap W_j$ it holds:
\[ E_{q' \arr f(\underline x)}^u \subset F^{\delta}( E_{j \underline x}^u).\]


That is for every $\underline x\in W_{q'}\cap \arr f(W_j)$ it holds:
\[ E_{q' \underline x}^u \subset F^{\delta}( E_{j \arr f^{-1}(\underline x)}^u).\]

Let $\underline x\in W_{q'}\cap \arr f(W_i)=  \arr f^{N_{q'}}(int \, M_{q'-1}^c)\setminus cl(\arr f^{-1}(O_{q'}) )\cap \arr f( W_j)$.  In particular $\underline x$ belongs to $W_{q'} \cap \arr f(O_{q'})\setminus \arr f^{-1}(O_{q'})$. 

If $\underline x$ belongs to $W_{q'} \cap O_{q'}\setminus \arr f^{-1}(O_{q'})$, then $E_{q'\underline x}^u$ is a linear sum of vectors included in $\tilde E_{\underline x}$ and $F^\delta( \tilde E_{ \arr f^{-1} (\underline x)})$. We recall that  $\tilde E_{\underline x}$ is included in $F^\delta(E^u_{i\arr f^{-1}(\underline x)})$ by Lemma \ref{tildeEu}. Also $\tilde E_{\arr f^{-1}(\underline x)}$ is included in $F^\delta(E^u_{j\arr f^{-2}(\underline x)})$ with $j\ge i$ such that $\arr f^{-2}(\underline x)\in W_j$. By $(ii)$, 
$F^\delta(\tilde E_{\arr f^{-1}(\underline x)})$ is included in $F^\delta(E^u_{i\arr f^{-1}(\underline x)})$. Hence $E^u_{q' \underline x}$ is included in $F^\delta(E^u_{i\arr f^{-1}(\underline x)})$.

If $\underline x$ belongs to $W_{q'} \cap \arr f (O_{q'})\setminus O_{q'}$, then $E_{q'\underline x}^u$ is a linear sum of vectors included in  $F^\delta( \tilde E_{ \arr f^{-1} (\underline x)})$ and ${F^\delta}^2( \tilde E_{ \arr f^{-2} (\underline x)})$.
As in the previous case, $F^\delta(\tilde E_{\arr f^{-1}(\underline x)})$ is included in $F^\delta(E^u_{i\arr f^{-1}(\underline x)})$. 
Similarly,   $\tilde E_{ \arr f^{-2} (\underline x)}$ is included in $F^\delta(E^u_{j\arr f^{-3}(\underline x)})$ with $j\ge i$ such that $\arr f^{-3}(\underline x)\in W_j$. By $(ii)$, 
${F^\delta}^2(\tilde E_{\arr f^{-2}(\underline x)})$ is included in ${F^\delta}^3(E^u_{j\arr f^{-3}(\underline x)})\subset F^\delta(E^u_{i\arr f^{-1}(\underline x)})$. Hence $E^u_{q' \underline x}$ is included in $F^\delta(E^u_{i\arr f^{-1}(\underline x)})$.

%

\paragraph{Proof of Lemma \ref{tildeEu}}
We want to construct $ \tilde E\in U_\eta$  which is $d_1$-continuous and $d_\infty$-Lipschitz and such that for every $j>q'$:
\[\underline x\in Y_{q'}\cap \arr f(W_j),\quad  E_{q' \underline x}^u \subset F^{\delta}( E_{j \arr f^{-1}(\underline x)}^u).\]

By property $(v)$, the section 
\[\underline x \in \arr f( W_{j})\mapsto {F^\delta}( E_{j\arr f^{-1}(\underline x)}^u)\] 
is $d_1$-continuous and $d_\infty$-Lipschitz. 


For every $j>q'$, let $W_j'$ be an open neighborhood of $\arr \Omega_j$ with closure in $W_{j}$ and such that
$\cup_{j>q''} W_j '$ contains $\cup _{j> q''} W^s(\arr \Omega_j)$ for every $q''\ge q'$ and $(vi)$ is satisfied.  

For every $j>q'$, let $\rho_j$ be a dump function equal to 1 on $\arr f( W'_j)\cup W'_j$ and with support in $\arr f( W_{j})\cup W_j$. We remark that $(\rho_j, 1-\rho_j)$ is a partition of the unity subordinate to the cover $(\arr f( W_{j})\cup W_j, \arr f( {W'_j})^c\cap {W'_j}^c  )$.

Let $p_j(\underline x)$ be the projection of $\R^N$ onto $F^\delta(E_{j\arr f^{-1}(\underline x)}^u)$ parallelly to $E_{j\underline x}^s$.

Let $\tilde E_{q'}$ be the orthogonal of $E_{q'}^s|Y_{q'}$. 

By Claim \ref{claim71}, for every $j$, for $Y_ {q'}$ and $\delta$-small enough, the plane $\tilde E_{q'}$ makes an angle greater than a certain $\eta>0$ with $E^s_{q' \delta}(\underline x)$. Hence its projection by $p_j$ remains of constant dimension and so is $d_1$-continuous and $d_\infty$-Lipschitz. 

We now construct inductively $(\tilde E_j)_{j\ge q'}$. 

Let $j\ge q'$ and let  us suppose the section $\tilde E_{j}\in U_\eta$, $d_1$-continuous and $d_\infty$-Lipschitz, constructed so that for every $i\in (q',j)$:
\begin{itemize}
\item for every $\underline x\in \arr f( W_i')$, the plane $\tilde E_{j\underline x} $ is included in $F^\delta(E_{i \arr f^{-1}(\underline x)}^u)$. 
\item for every $\underline x\in W_i'$, the plane $\tilde E_{j\underline x} $ is included in $E_{i \underline x}^u$. 
\end{itemize}
 Put:
\[\tilde E_{j+1 \underline x}:=
 \big\{\rho_{j +1}(\underline x)\cdot p_{j+1}(\underline x)(u)+(1-\rho_{j+1}(\underline x))\cdot (u):\; u\in \tilde E_{j \underline x}\big\}.\]

We define $\tilde E= \tilde E_q$.

We remark that by $(ii)$:
 \[\forall \underline x\in Y_{q'}\cap \arr f( W'_{j+1}),\quad  E_{q' \underline x}^u \subset F^{\delta}( E_{j \arr f^{-1}(\underline x)}^u).\]

 Hence by replacing $(W_j)_j$ by $(W_j')_j $, Lemma \ref{tildeEu} is proved. 
\bibliographystyle{amsalpha} \bibliography{references}
\end{document}